\newcommand{\bea}{\begin{eqnarray}}
\newcommand{\eea}{\end{eqnarray}}
\def\beaa{\begin{eqnarray*}}
\def\eeaa{\end{eqnarray*}}
\def\ba{\begin{array}}
\def\ea{\end{array}}
\def\be#1{\begin{equation} \label{#1}}
\def \eeq{\end{equation}}
\def\be{{\beta}}
\def\ga{\gamma}
\def\eps{\epsilon}
\def\si{\sigma}
\def\th{\theta}
\def\al{\alpha}
\def\GG{{\mathcal G}}
\def\Z{{\bf Z}}
\def\G{{\mathbb G}}
\def\Z{{\mathbb Z}}
\def\f12{{\frac 1 2}}
\def\bH{{\widetilde{H}}}
\def\th{\theta}
\def\va{\vartheta}
\def\f{\widetilde{f}}
\def\um{\underline{u}}
\def\vm{\underline{v}}
\newtheorem{theorem}{Theorem}[section]
\newtheorem{lemma}[theorem]{Lemma}
\newtheorem{proposition}[theorem]{Proposition}
\newtheorem{definition}[theorem]{Definition}
\newtheorem{remark}[theorem]{Remark}
\numberwithin{equation}{section}
\begin{document}
\title[Averages along polynomial sequences in nilpotent groups]{Averages along polynomial sequences in discrete nilpotent groups: singular Radon transforms}
\author{Alexandru D. Ionescu}
\address{Princeton University}
\email{aionescu@math.princeton.edu}
\author{Akos Magyar}
\address{University of British Columbia}
\email{magyar@math.ubc.ca}
\author{Stephen Wainger}
\address{University of Wisconsin--Madison}
\email{wainger@math.wisc.edu}

\thanks{The first author was partially supported by a Packard Fellowship and NSF grant DMS-1065710. The second author was partially supported by NSERC grant 22R44824.}

\begin{abstract}

We consider a class of operators defined by taking averages along polynomial sequences in discrete nilpotent groups. As in the continuous case, one can consider discrete maximal Radon transforms, which have applications to pointwise ergodic theorems, and discrete singular Radon transforms. In this paper we prove $L^2$ boundedness of discrete singular Radon transforms along general polynomial sequences in discrete nilpotent groups of step 2.
\end{abstract}

\maketitle

\tableofcontents

\section{Introduction}\label{intro}

A class of interesting problems arises in studying averages of functions along polynomial sequences in discrete nilpotent groups. More precisely, assume $\mathbb{G}$ is a discrete nilpotent group of step $d\geq 1$ and $A:\mathbb{Z}\to\mathbb{G}$ is a polynomial sequence (see Definition \ref{MainDef} below), and consider the following problems:{\footnote{One can also state similar problems in the case of $L^q$ functions, $q>1$, or for multi-dimensional polynomial sequences $A:\mathbb{Z}^{k}\to\mathbb{G}$, $k\geq 1$.}}
\medskip

{\bf{Problem 1.}} ($L^2$ boundedness of maximal Radon transforms) Assume $f:\mathbb{G}\to\mathbb{C}$ is a function and let
\begin{equation*}
\mathcal{M}f(g)=\sup_{N\geq 0}\frac{1}{2N+1}\sum_{|n|\leq N}|f(A^{-1}(n)\cdot g)|,\qquad g\in\mathbb{G}.
\end{equation*}
Then
\begin{equation*}
\|\mathcal{M}f\|_{L^2(\mathbb{G})}\lesssim \|f\|_{L^2(\mathbb{G})}.
\end{equation*}
\medskip

{\bf{Problem 2.}} ($L^2$ pointwise ergodic theorems) Assume $\mathbb{G}$ acts by measure-preserving transformations on a probability space $X$, $f\in L^2(X)$, and let
\begin{equation*}
A_Nf(x)=\frac{1}{2N+1}\sum_{|n|\leq N}f(A^{-1}(n)\cdot x),\qquad x\in X.
\end{equation*}
Then the sequence $A_Nf$ converges almost everywhere in $X$ as $N\to\infty$.
\medskip

{\bf{Problem 3.}} ($L^2$ boundedness of singular Radon transforms) Assume $K:\mathbb{R}\to\mathbb{R}$ is a Calderon--Zygmund kernel (see \eqref{CalZyg}), $f:\mathbb{G}\to\mathbb{C}$ is a (compactly supported) function, and let
\begin{equation*}
Hf(g)=\sum_{n\in\mathbb{Z}} K(n)f(A^{-1}(n)\cdot g),\qquad g\in\mathbb{G}.
\end{equation*}
Then
\begin{equation*}
\|Hf\|_{L^2(\mathbb{G})}\lesssim \|f\|_{L^2(\mathbb{G})}.
\end{equation*}
\medskip

The maximal Radon transform and the singular Radon transform can be thought of as discrete analogues of the continuous Radon transforms, which are averages along suitable curves or surfaces in Euclidean spaces. The theory of continuous Radon transforms has been extensively studied and is very well understood (including $L^q$, $q>1$, estimates and multidimensional averages), see for example \cite{Ch}, \cite{RiSt}, \cite{ChNaStWa}.

In the discrete setting, the three questions raised above have been answered in the affirmative in the commutative case $\G=\Z^d$.{\footnote{The linear case $\mathbb{G}=\mathbb{Z}$, $A(n)=n$, is, of course, well-known.}} The maximal function estimate and the pointwise ergodic theorem were proved by Bourgain \cite{Bo1}, \cite{Bo2}, \cite{Bo3}, also in the case of $L^q$ functions, $q>1$. $L^2$ estimates for singular Radon transforms were obtained in \cite{ArOs}, the $L^q$ boundedness was established in \cite{StWa1} for $3/2<q<3$ and were extended for all $q>1$ in \cite{IoWa}. Closely related fractional integral operators were treated in \cite{Ob}, \cite{StWa3}, \cite{Pi1}, \cite{Pi2}.

Only partial results are available, however, in the case non-commutative discrete nilpotents groups, even in the case of step 2 nilpotent groups. A general feature of the partial results obtained in the non-commutative setting, see \cite{IoMaStWa}, \cite{MaStWa}, \cite{StWa4}, is that the averages are taken over surfaces transversal to the center of the group, such that the "non-linear" part of the polynomial map is contained in the center. The point is that for such special polynomial sequences one can still use the Fourier transform in the central variables to analyze the operators. 

However, it appears that one needs to proceed in an entirely different way in the case of general polynomial maps, when the Fourier transform method is not available. The present work is the first attempt to treat discrete Radon transforms along general polynomial sequences in the non-commutative nilpotent settings. More precisely, we will discuss the easier Problem 3 in the case of discrete nilpotent groups of step $2$.

Finally let us remark that the $L^2$ ergodic theorems of Bergelson and Leibman \cite{BeLe} indicate that nilpotent groups provide the most general settings to which the results of Bourgain might extend. Indeed, they have shown that averages of measure preserving transformations generating a nilpotent group converge in the mean along any polynomial sequence, however this does not hold for transformations generating a solvable group.

To describe our settings in detail, recall that a polynomial sequence on a nilpotent group $\G$ is a map $A:\Z\to\G$, such that $D^k A(n)=1$ for all $n$ for some fixed $k$, where $D^k$ is the $k$-fold iterate of the differencing operator $D$ defined by $DA(n)=A(n)^{-1}A(n+1)$. It is known, see \cite{Le} that $A$ is a polynomial sequence if and only if $A(n)=g_1^{p_1(n)}\ldots g_t^{p_t(n)}$ for all $n$, where $g_1,\ldots,g_t$ are elements of $\G$ and $p_1,\ldots,p_t$ are integral polynomials. In particular the image of the map $A$ is contained in a finitely generated subgroup of $\G$, thus without the loss of generality we will assume that $\G$ is finitely generated and hence countable. We will also assume that $\G$ is torsion free and then, by a result of Malcev \cite{Mal}, the group $\G$ can be embedded as a discrete, co-compact subgroup of a (connected and simply connected) nilpotent Lie group $\G^\sharp$. This motivates the following:

\begin{definition}\label{MainDef}
Given $d\geq 1$, a group $\G$ will be called a discrete nilpotent group of step $d$ if $\mathbb{G}$ is isomorphic to a discrete, co-compact subgroup of a (connected and simply connected) nilpotent Lie group $\G^\sharp$ of step $d$. 

Given a group $\G$, a sequence $A:\Z\to\G$ will be called a polynomial sequence if $A(0)=1$ and $D^{k_0}A\equiv 1$ for some $k_0\geq 1$, where, by definition,
\begin{equation*}
D^0A(n)=A(n),\qquad D^{k+1}A(n)=D^kA(n)^{-1}D^kA(n+1),\qquad n\in\mathbb{Z}.
\end{equation*}
\end{definition}

In this paper we consider only the easier problem of $L^2$ boundedness of the discrete singular Radon transforms. To formulate our main result, let $K:\mathbb{R}\to\mathbb{R}$ be a Calderon--Zygmund kernel, i.e. a $C^1$ function satisfying
\begin{equation}\label{CalZyg}
\sup_{t\in\mathbb{R}}[(1+|t|)|K(t)|+(1+|t|)^2|K'(t)|]\leq 1,\qquad\sup_{N\geq 0}\Big|\int_{-N}^NK(t)\,dt\Big|\leq 1.
\end{equation}
The main theorem we prove in this paper is the following:

\begin{theorem}\label{Main1}
Assume $\G$ is a discrete nilpotent group of step $2$, $K$ is a Calderon--Zygmund kernel, and $A:\mathbb{Z}\to\G$ is a polynomial sequence. For any (compactly supported) function $f:\G\to\mathbb{C}$ let
\begin{equation*}
(Hf)(g)=\sum_{n\in\Z}K(n)f(A^{-1}(n)\cdot g),\qquad g\in\G.
\end{equation*}
Then
\begin{equation*}
\|Hf\|_{L^2(\G)}\lesssim\|f\|_{L^2(\G)}.
\end{equation*}
\end{theorem}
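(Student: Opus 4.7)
The plan is a dyadic-in-$n$ decomposition combined with a Cotlar--Stein almost-orthogonality argument, exploiting the polynomial structure of $A$ together with the cancellation built into \eqref{CalZyg}.

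In Malcev coordinates on $\G^\sharp$ we identify $\G$ with $\Z^{d_1}\times\Z^{d_2}$, with the second factor central and group law $(x,z)(x',z')=(x+x',\,z+z'+B(x,x'))$ for a fixed skew-symmetric bilinear map $B$, so that $A(n)=(P(n),Q(n))$ for integer polynomial vectors $P,Q$ and $H$ is right convolution by $\mu=\sum_n K(n)\delta_{A^{-1}(n)}$. Writing $K=\sum_{j\geq 0} K_j$ with $K_j$ a smooth cutoff to $|t|\sim 2^j$, we obtain $H=\sum_j H_j$, and the theorem reduces to (i) uniform single-scale bounds $\|H_j\|_{L^2\to L^2}\les 1$ and (ii) almost-orthogonality $\|H_j H_k^\ast\|+\|H_j^\ast H_k\|\les 2^{-\epsilon|j-k|}$ for some $\epsilon>0$; Cotlar--Stein then concludes.

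For (i), the mean-zero condition in \eqref{CalZyg} allows one to control $H_j$ by a dyadic averaging operator at scale $2^j$, to which the step-$2$ techniques developed in \cite{IoMaStWa},\cite{MaStWa},\cite{StWa4} apply. Taking the Fourier transform in the central variable, $H_j$ diagonalizes into a family of oscillatory operators on $\ell^2(\Z^{d_1})$,
\[ T_{\lambda,j}g(x)=\sum_n K_j(n)\,e^{-2\pi i\lambda\cdot(Q(n)+B(P(n),x))}\,g(x-P(n)), \]
and by Plancherel $\|H_j\|_{L^2\to L^2}=\sup_\lambda\|T_{\lambda,j}\|$. For (ii), a direct computation shows that the kernel of $T_{\lambda,j}^\ast T_{\lambda,k}$ is supported on translates $y\mapsto y+P(n)-P(m)$ carrying the coupled phase $-\lambda\cdot(Q(n)-Q(m)+B(P(n),P(m))+B(P(n)-P(m),y))$, which one must bound uniformly in $\lambda$.

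The decisive obstacle is the mixed bilinear term $B(P(n),P(m))$, which ties the two summation variables together and was absent in the Abelian case and in the previously treated special polynomial sequences. To extract the $2^{-\epsilon|j-k|}$ decay uniformly in $\lambda$, I would run a Bourgain-style major/minor arcs decomposition in $\lambda$: on minor arcs, iterate Weyl differencing and van der Corput jointly in $n$ and $m$ to defeat the bilinear coupling, combined with summation by parts that extracts further cancellation from \eqref{CalZyg}; on major arcs, approximate $\lambda$ by a rational, compare the sum with a continuous oscillatory integral on the Lie algebra of $\G^\sharp$, and invoke the known $L^2$ boundedness of continuous singular Radon transforms on nilpotent Lie groups (cf.\ \cite{ChNaStWa}). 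The minor-arcs analysis of the coupled bilinear phase, together with achieving uniformity in $\lambda$ at both scales simultaneously, is where the genuine novelty of the step-$2$ setting is concentrated and where the hardest work lies.
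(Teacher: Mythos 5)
Your overall frame (dyadic decomposition plus Cotlar--Stein) is the natural starting point, but the proposal founders on exactly the two points where the paper departs from it. First, the almost-orthogonality bound you assert in step (ii), $\|H_jH_k^\ast\|+\|H_j^\ast H_k\|\lesssim 2^{-\epsilon|j-k|}$, is precisely the estimate that fails here. What one can prove (and what the paper proves, via the kernels of the high powers $(H_j^\ast H_j)^r$) is only the weaker bound $\lesssim 2^{-\delta'(j-k)}+2^{-\delta' k}$ for $k\leq j$; the extra term $2^{-\delta' k}$ comes from the rational ("major arc") frequencies $a/q$ with $q$ comparable to a small power of $2^k$, which resonate with $H_k$ no matter how large $j$ is, and it cannot be removed (see Remark \ref{rema}). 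This weaker bound is not a valid input for Cotlar--Stein over all scales --- it only yields uniform control of $\sum_{j\in[J,2J]}H_j$. The entire point of Section \ref{orthog} of the paper is a new almost-orthogonality lemma (Lemma \ref{prop1}) designed to compensate for this failure, applied to blocks $S_m=\sum_{j\in[(1-\kappa)J_m,J_m]}H_j$ along a rapidly increasing sequence $J_m$, with the key gain $2^{-\delta m}$ coming from composing $S_m$ against long sums $(S_{m+1}^\ast S_{m+1})^r+\ldots+(S_K^\ast S_K)^r$ rather than against individual pieces. Without some replacement for this step your argument cannot close.

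Second, the reduction to the twisted operators $T_{\lambda,j}$ via the Fourier transform in the central variable is the method the paper explicitly identifies as unavailable for general polynomial sequences: it works in the earlier literature only when the nonlinear part of $A$ lies in the center (so that $P$ is linear and $T_{\lambda,j}$ is a genuine twisted convolution). For general $P$ the phase $\lambda\cdot B(P(n),x)$ couples the summation variable to the base point, $T_{\lambda,j}$ is not a convolution operator on $\Z^{d_1}$, and no uniform-in-$\lambda$ circle-method analysis of the coupled bilinear phase is known; you have deferred all of the substance to this unexecuted step. The paper instead avoids the central Fourier transform entirely: it transfers the problem to a universal step-2 group $\G_0$ with an explicit polynomial $A_0$, computes the kernels of $(H_j^\ast H_j)^r$ and $(H_jH_j^\ast)^r$ directly as oscillatory sums in the group coordinates (Proposition \ref{majarcs}), and controls them by Weyl--Davenport--Birch exponential sum estimates (Proposition \ref{minarcs}). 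So both the decomposition you rely on and the key estimate you assume are gaps, not details.
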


We describe now some of the main ideas in the proof of the theorem. We use first a transference principle to reduce matters to proving the theorem in a certain "universal" case. More precisely, it will suffice to consider singular Radon transforms on the groups $\mathbb{G}_0=\mathbb{G}_0(d)$ defined in section \ref{transference}, and for explicit polynomial sequences $A_0:\mathbb{Z}\to\mathbb{G}_0$, see Theorem \ref{Main2}. This reduction simplifies the overall picture and allows us to work in good systems of coordinates, which are well adapted to the natural homogeneities induced by the polynomial $A_0$. However, the main problem, namely the lack of a good Fourier transform on the group $\mathbb{G}_0$ compatible with the structure of our convolution operators, remains even in this special setting.

A natural approach is to attempt to prove the theorem using the Cotlar--Stein lemma. More precisely, we may assume that
\begin{equation*}
K=\sum_{j=1}^\infty K_j,\qquad \int_{\mathbb{R}}K_j(t)\,dt=0,\qquad 2^j|K_j(t)|+2^{2j}|K'_j(t)|\leq \mathbf{1}_{[-2^{j+3},2^{j+3}]}(t),
\end{equation*}
and consider the dyadic averages
\begin{equation*}
H_j(f)(g)=\sum_{n\in\mathbb{Z}}K_j(n)f(A_0(n)^{-1}\cdot g),\qquad g\in\mathbb{G}_0.
\end{equation*}
To apply the Cotlar--Stein lemma, we would have to prove an inequality of the form
\begin{equation}\label{introd1}
\|H_kH_j^\ast\|_{L^2\to L^2}+\|H_k^\ast H_j\|_{L^2\to L^2}\lesssim 2^{-\delta'(j-k)}
\end{equation}
for some $\delta'>0$, and for any $k\leq j\in\{1,2,\ldots\}$. This is equivalent to proving that
\begin{equation}\label{introd2}
\|H_k(H_j^\ast H_j)^r\|_{L^2\to L^2}+\|H_k^\ast (H_jH_j^\ast)^r\|_{L^2\to L^2}\lesssim 2^{-\delta(j-k)}
\end{equation}
for some $\delta>0$, $r\in\{1,2,\ldots\}$, and for any $k\leq j\in\{1,2,\ldots\}$.

The advantage of proving \eqref{introd2} instead of \eqref{introd1} is that the operators $(H_j^\ast H_j)^r$ and $(H_jH_j^\ast)^r$ are more regular than the operators $H_j$, provided that $r\geq r(d)$ is sufficiently large. The kernels of these operators can be described precisely, see Proposition \ref{majarcs}. Up to negligible errors, these operators are essentially sums of more standard oscillatory singular operators on the group $\mathbb{G}_0$, given by kernels of the form\footnote{The proof of Proposition \ref{majarcs}, which includes this description, relies on the complicated oscillatory sum estimates in Proposition \ref{minarcs}. Having an elementary, essentially self-contained proof of these estimates is the main reason for working on step 2 groups, instead of the general case.}
\begin{equation}\label{introd9}
h\to\sum_{a/q}S^{(r)}(a/q)e^{2\pi ih\cdot a/q}K_j^{(r)}(h).
\end{equation}
The sum is taken over suitable "irreducible fractions" $a/q$, the coefficients $S^{(r)}(a/q)$ have sufficiently fast decay decay as $q\to\infty$ (provided that $r$ is sufficiently large), and $K_J^{(r)}$ is (almost) a standard singular integral kernel adapted to the canonical non-isotropic balls on the underlying Lie group $\mathbb{G}_0^\#$. This representation can be used to prove that
\begin{equation*}
\|H_k(H_j^\ast H_j)^r\|_{L^2\to L^2}+\|H_k^\ast (H_jH_j^\ast)^r\|_{L^2\to L^2}\lesssim 2^{-\delta(j-k)}+2^{-\delta k},\qquad\delta>0,\,k\leq j\in\{1,2,\ldots\},
\end{equation*}
see Lemma \ref{separated2}, and, as a consequence,
\begin{equation}\label{introd7}
\|H_kH_j^\ast\|_{L^2\to L^2}+\|H_k^\ast H_j\|_{L^2\to L^2}\lesssim 2^{-\delta'(j-k)}+2^{-\delta' k},\qquad\delta'>0,\,k\leq j\in\{1,2,\ldots\}.
\end{equation}
Unfortunately this last bound is weaker than the desired bound \eqref{introd1}, and the additional factor $2^{-\delta' k}$ cannot be removed. As a consequence, the Cotlar--Stein lemma can be used to prove the weaker bound
\begin{equation*}
\big\|\sum_{j\in[J,2J]}H_j\big\|_{L^2\to L^2}\lesssim 1,\qquad\text{ uniformly in }J,
\end{equation*}
but is not suitable to control the entire sum over $j$.

To estimate the entire sum we need an additional almost-orthogonality lemma, which we prove in section \ref{orthog}. This lemma appears to be new and might be of independent interest. In its simplest form, it says that if $S_1,\ldots S_K$ are bounded linear operators on a Hilbert space $H$ satisfying, for any $m=1,\ldots,K$,
\begin{equation}\label{introd6}
\begin{split}
&\sup_{m\in\{1,\ldots,K\}}\|S_m\|\leq 1,\\
&\sup_{i_m,\ldots,i_K\in \{0,1\}}\|S^{\ast}_{m,i_m}[(S_{m+1,i_{m+1}}S^\ast_{m+1,i_{m+1}})^{p_0}+\ldots+(S_{K,i_K}S^\ast_{K,i_K})^{p_0}]\|\leq A2^{-\delta_0m},\\
&\sup_{i_m,\ldots,i_K\in \{0,1\}}\|S_{m,i_m}[(S^\ast_{m+1,i_{m+1}}S_{m+1,i_{m+1}})^{p_0}+\ldots+(S^\ast_{K,i_K}S_{K,i_K})^{p_0}]\|\leq A2^{-\delta_0m},
\end{split}
\end{equation}
for some $\delta_0>0$, some dyadic number $p_0$, and some constant $A$, then
\begin{equation*}
\|S_1+\ldots+S_K\|\leq C(\delta_0,A,p_0).
\end{equation*}
The notation in \eqref{introd6} is $S_{m,0}=S_m$ and $S_{m,1}=0$.

We apply this almost-orthogonality lemma with
\begin{equation*}
S_m=\sum_{j\in [(1-\kappa)J_m,J_m]}H_j,
\end{equation*}
where $\kappa>0$ is a sufficiently small constant and $J_1,J_2,\ldots$ is a rapidly increasing sequence, $J_{m+1}\geq 2J_m$. The inequality in the first line of \eqref{introd6} is a consequence of the Cotlar--Stein lemma and \eqref{introd7}. We prove the remaining inequalities in \eqref{introd6} in two steps: in Lemma \ref{separated3} we prove the uniform bounds
\begin{equation*}
\big\|(S_m^\ast S_m)^r+\ldots+(S_n^\ast S_n)^r\big\|_{L^2\to L^2}+\big\|(S_mS_m^\ast)^r+\ldots+(S_n S_n^\ast)^r\big\|_{L^2\to L^2}\lesssim 1,
\end{equation*}
for any $m\leq n\in\{1,2,\ldots\}$. For this we establish formulas similar to \eqref{introd9} for the kernels of the operators $(S_k^\ast S_k)^r$ and $(S_k^\ast S_k)^r$. Then we show in Lemma \ref{separated5} that left composition with the operator $S_{m-1}$ (or $S^\ast_{m-1}$ respectively) contributes an additional factor of $2^{-\delta m}$, $\delta>0$, thereby proving the desired bounds in \eqref{introd6}.

The rest of the paper is organized as follows. In section \ref{transference} we use a transference argument to reduce the general case in Theorem \ref{Main1} ( corresponding to a general group $\G$ and a general sequence $A:\mathbb{Z}\to\G$) to a "universal" case (corresponding to a particular group $\mathbb{G}_0$ and a particular sequence $A_0:\mathbb{Z}\to\mathbb{G}_0$).

In section \ref{oscl2} we define the operators $H_j$ (the dyadic pieces of our singular Radon transforms), and describe the operators $H_{j_1}^\ast H_{k_1}\ldots H_{j_r}^\ast H_{k_r}$ and $H_{j_1} H_{k_1}^\ast\ldots H_{j_r} H_{k_r}^\ast$,  for integers $j_1,k_1,\ldots j_r,k_r\in[J(1-\kappa),J]$. For $r\geq r(d)$ large enough we prove in Proposition \ref{majarcs} that the kernels of these operators are sums of more standard oscillatory singular integral kernels, similar to \eqref{introd9} (arising from "major arcs"), and negligible errors (arising from "minor arcs"). The bounds on these error terms rely on Proposition \ref{minarcs} and are delicate in our situation, due to the complicated structure of the polynomials that arise as a result of multiplication in the group $\mathbb{G}_0$.

Section \ref{proofthm} contains the proof of Theorem \ref{Main2}, i.e. the proof of the bounds in \eqref{introd6}, along the line described above.

In section \ref{oscil} we prove estimates for trigonometric sums and integrals, using a variant of the Weyl method developed by Davenport \cite{Da1} and Birch \cite{Bi}. These estimates are used at several places, for example to control the contributions of the "minor arcs" and to estimate the coefficients $S^{(r)}(a/q)$ in \eqref{introd9}. For the sake of completeness we provide all the details needed in the proof.

Finally, in section \ref{orthog} we state and prove a suitable version of the additional orthogonality lemma described in \eqref{introd6}.

{\bf{Acknowledgement:}} We would like to express our deep gratitude to Elias Stein, for his guidance and friendship throughout the years. 

\section{A transference argument}\label{transference}

Let $\G^\#$ be a step 2 (connected and simply connected) nilpotent Lie group and let $\GG$ denote its Lie algebra. Choose a basis $\mathcal{X}=\{X_1,\ldots,X_{d_1},Y_1,\ldots,Y_{d_2}\}$ of the Lie algebra $\GG$ such that $\mathbb{R}-span\,\{Y_1,\ldots,Y_{d_2}\}=[\GG,\GG]$, the commutator subalgebra of $\GG$. Note that this is a special case of a so-called strong Malcev basis passing through the lower central series $\GG\geq [\GG,\GG]\geq\{0\}$ (see \cite{CoGr}, Sec. 1.2). Associated to such a basis one defines coordinates on $\G^\#$ via the diffeomorphism $\phi:\mathbb{R}^d\to\G^\#$ defined by
\[\phi(x_1,\ldots,x_{d_1},y_1,\ldots,y_{d_2})=\exp(x_1X_1)\ldots\exp(x_{d_1}X_{d_1})
\exp(y_1Y_1)\ldots\exp(y_{d_2}Y_{d_2}).\]
Such coordinates associated to a Malcev basis are called exponential coordinates of the second kind. In these coordinates we have that
\begin{equation}\label{mult}
\G^\#=\{(x,y)\in\mathbb{R}^{d_1}\times\mathbb{R}^{d_2}:(x,y)\cdot (x',y')=(x+x',y+y'+R(x,x')\},
\end{equation}
where $R:\mathbb{R}^{d_1}\times\mathbb{R}^{d_1}\to\mathbb{R}^{d_2}$ is a bilinear form. This follows easily from facts that $\exp(X)\cdot\exp(Y)=\exp(X+Y+\frac{1}{2}[X,Y])$ which implies that
\[\exp(x_iX_i)\exp(x_j'X_j)=\exp(x_j'X_j)\exp(x_iX_i)\exp(x_ix_j'[X_i,X_j]),\] and $[X_i,X_j]=\sum_{l=1}^{d_2} c^l_{ij}Y_l$.

If $\G\leq\G^\#$ is a discrete co-compact subgroup, then one can choose such a basis $\mathcal{X}=\{X_1,\ldots,Y_{d_2}\}$ so that
\[\G=\phi(\Z^d)=\exp(\Z X_1)\ldots\exp(\Z X_{d_1})\exp(\Z Y_1)\ldots\exp(\Z Y_{d_2}),\] see \cite{CoGr} Thm. 5.1.6 and Prop. 5.3.2. Thus the discrete subgroup $\G$ is identified with the integer lattice $\Z^d=\Z^{d_1}\times\Z^{d_2}$.

If $A:\mathbb{Z}\to\G$ is a polynomial sequence ($A(0)=1$), then it is not hard to see that in these coordinates it takes the form
\begin{equation*}
A(n)=(x_1(n),\ldots,x_{d_1}(n),y_1(n),\ldots,y_{d_2}(n)),\qquad A(0)=0,
\end{equation*}
where $x_{l_1},y_{l_2}$ are integral polynomials. Indeed, writing
\[DA(n)=(Dx_1(n),\ldots,Dx_{d_1}(n),Dy_1(n),\ldots,Dy_{d_2}(n)),\] we have form (\ref{mult}) that $Dx_i(n)=x_i(n+1)-x_i(n)$ and $Dy_l(n)=y_l(n+1)-y_l(n)-R'_l(n)$ where $R'_l(n)$ is a polynomial expression of $x_1(n),\ldots,x_{d_1}(n),x_1(n+1),\ldots,x_{d_1}(n+1)$. Since $D^kx_i(n)$ is identically zero it follows that $x_i(n)$ is a polynomial of degree at most $k$, and then the vanishing of $D^ky_l(n)$ implies that $y_l(n)$ must be polynomial as well. Alternatively this fact can be easily derived from the characterization of polynomial sequences by Leibman \cite{Le} mentioned in the introduction. We will denote by $d_3$ the maximum of the degrees of the polynomials $x_i(n)$ and $y_l(n)$.

It will be useful to consider the polynomial map $A:\Z\to\G$ as a map $A:\Z\to\G^\#$, and the associated singular Radon transform acting on $L^2(\G^\#)$, defined by
\[(\bH f)(g)=\sum_{n\in\Z}K(n)f(A^{-1}(n)\cdot g),\qquad g\in\G^\#.\] In this settings our main result takes the form
\begin{theorem}\label{Main2.1}
Assume $\G^\#$ is a (connected and simply connected) nilpotent Lie group of step $2$, $K$ is a Calderon--Zygmund kernel, and $A:\mathbb{Z}\to\G^\#$ is a polynomial sequence. For any (continuous compactly supported) function $f:\G^\#\to\mathbb{C}$, we have
\begin{equation*}
\|\bH f\|_{L^2(\G^\#)}\lesssim\|f\|_{L^2(\G^\#)}.
\end{equation*}
\end{theorem}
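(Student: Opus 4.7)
The plan is to deduce Theorem \ref{Main2.1} from a ``universal'' theorem (Theorem \ref{Main2}) via a two-stage transference. Theorem \ref{Main2}, to be stated shortly, asserts the $\ell^2(\mathbb{G}_0)$ boundedness of the discrete singular Radon transform associated to an explicit integer-coefficient polynomial sequence $A_0:\Z\to\mathbb{G}_0=\mathbb{G}_0(d)$ on a specific model step-2 lattice. The reduction proceeds in two steps: (i) pass from the $\ell^2(\mathbb{G}_0)$ bound to an $L^2(\mathbb{G}_0^\#)$ bound for the corresponding continuous operator $\bH_0$ on the model Lie group; (ii) transport the $L^2(\mathbb{G}_0^\#)$ bound to the desired $L^2(\G^\#)$ bound along a surjective Lie group homomorphism $\Phi:\mathbb{G}_0^\#\to\G^\#$ satisfying $\Phi\circ A_0=A$.

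Step (i) is a standard Calderon-style transference. Fix a Borel fundamental domain $\Omega$ of $\mathbb{G}_0$ in $\mathbb{G}_0^\#$, so that $\mathbb{G}_0^\#=\bigsqcup_{\gamma\in\mathbb{G}_0}\gamma\Omega$, and for $\omega\in\Omega$ define $F_\omega(\gamma):=f(\gamma\omega)$. Since $A_0(n)\in\mathbb{G}_0$, we have $(\bH_0 f)(\gamma\omega)=(H_0 F_\omega)(\gamma)$ for the discrete operator $H_0$, whence Fubini gives
\[
\|\bH_0 f\|_{L^2(\mathbb{G}_0^\#)}^2=\int_\Omega\|H_0 F_\omega\|_{\ell^2(\mathbb{G}_0)}^2\,d\omega\le\|H_0\|_{\ell^2\to\ell^2}^2\,\|f\|_{L^2(\mathbb{G}_0^\#)}^2.
\]
For step (ii), one constructs $\mathbb{G}_0^\#(d)$ of sufficient dimension that the real coefficients of the polynomials $x_i(n),y_l(n)$ describing $A$ become free coordinates on $\mathbb{G}_0^\#$, while $A_0$ itself is assembled from integer-valued monomials in $n$. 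Fixing these coefficients to their actual values defines $\Phi$ and yields the intertwining $(\bH f)\circ\Phi=\bH_0(f\circ\Phi)$. Lifting $f\in C_c(\G^\#)$ to $F(g):=f(\Phi(g))\psi(g)$ with $\psi\in C_c^\infty(\mathbb{G}_0^\#)$ supported in a transversal slice for $\ker\Phi$ produces a test function on $\mathbb{G}_0^\#$ with $\|F\|_{L^2(\mathbb{G}_0^\#)}\asymp\|f\|_{L^2(\G^\#)}$, so that the universal $L^2(\mathbb{G}_0^\#)$ bound applied to $F$ reorganizes, via integration along the fibers of $\Phi$, into the required $L^2(\G^\#)$ bound for $\bH f$.

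The principal obstacle is step (ii): constructing $\mathbb{G}_0^\#(d)$ and $\Phi$ uniformly in $(\G^\#,A)$. This requires a careful parametrization of the multiplication law \eqref{mult}, and in particular of the bilinear form $R(x,x')$ and its interaction with the polynomial coefficients of $A$ that emerge when expanding products of the form $\exp(p_j(n) X_j)$. Once the model $(\mathbb{G}_0^\#,A_0)$ is in place, step (i) is routine, and the heavy lifting---the $\ell^2(\mathbb{G}_0)$ bound for $H_0$---is deferred to Theorem \ref{Main2} and the remaining sections of the paper.
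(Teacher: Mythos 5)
Your overall architecture---reduce to the universal pair $(\G_0,A_0)$ via a homomorphism intertwining $A$ with $A_0$, and defer the real work to Theorem \ref{Main2}---is the same as the paper's, and your step (i) (the fundamental-domain passage between $\ell^2(\G_0)$ and $L^2(\G_0^\#)$) is correct; the paper performs the identical computation to show $\|\bH\|_{L^2(\G^\#)\to L^2(\G^\#)}=\|H\|_{L^2(\G)\to L^2(\G)}$. The gap is in step (ii). With $F=(f\circ\Phi)\psi$ one has
\begin{equation*}
\bH_0F(g)=\sum_{n\in\Z}K(n)\,f\big(A(n)^{-1}\cdot\Phi(g)\big)\,\psi\big(A_0(n)^{-1}\cdot g\big),
\end{equation*}
and the cutoff $\psi(A_0(n)^{-1}\cdot g)$ depends on $n$: the translations $A_0(n)^{-1}$ are unbounded and are \emph{not} along the fibers of $\Phi$, so no compactly supported $\psi$ can be even approximately invariant under them, and the claimed intertwining $\bH_0F=((\bH f)\circ\Phi)\cdot\psi$ fails. ``Integration along the fibers'' does not repair this. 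The standard fix---and what the paper actually does---is the transference principle: truncate the kernel to $K^R=K\mathbf{1}_{[-R,R]}$, use the isometric representation $\pi(g_0)f=f(T(g_0^{-1})\cdot\,)$ of the \emph{discrete} group $\G_0$ on $L^2(\G^\#)$, average $\|\pi(g_0^{-1})H^Rf\|^2$ over $g_0$ in a large finite set $U\subseteq\G_0$, recognize $\pi(g_0^{-1})(H^Rf)(g)=H_0^R(F_g)(g_0)$ with $F_g(h_0)=f(T(h_0)\cdot g)$, apply Theorem \ref{Main2} to $F_g\mathbf{1}_{U_R'}$ where $U_R'$ is an $R$-enlargement of $U$, and let $U$ grow so that $|U_R'|/|U|\le 2$. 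This quantifier order ($R$ fixed first, then $U$ chosen large) is exactly what your fixed cutoff $\psi$ cannot reproduce.

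Two further remarks. First, the paper's route never needs $\Phi$ to be a surjective Lie group homomorphism, nor any intermediate $L^2(\G_0^\#)$ statement for the reduction: the morphism $T$ of Lemma \ref{morphism} is only defined on the discrete group $\G_0$, and its image may well be a proper subgroup of $\G^\#$ (the coefficient vectors $\alpha_i,\gamma_l$ need not generate $\G^\#$). Arranging surjectivity is possible by enlarging $d$, but it is an unnecessary burden. Second, the construction of $T$ with $T\circ A_0=A$ is not merely ``bookkeeping of the bilinear form $R$'': one must first check that in exponential coordinates of the second kind $A$ has polynomial components of some degree $d_3$, then take $d=2d_3$ (the group multiplication doubles degrees in the central components), and solve for the central corrections $\gamma_1,\ldots,\gamma_d$; this is Lemma \ref{morphism} and should not be waved away.
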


We will show below that
\[\|\bH\|_{L^2(\G^\#)\to L^2(\G^\#)} = \|H\|_{L^2(\G)\to L^2(\G)},\]
hence Theorem \ref{Main2.1} and Theorem \ref{Main1} are equivalent. To see this let $\mathbf{S}_d=\phi([0,1)^d)$ where $\phi:\mathbb{R}^d\to\G^\#$ is the coordinate map defined above. From the multiplication structure given in (\ref{mult}) it is easy to see that $\mathbf{S}_d$ is a fundamental domain for $\G$, that is every element $g\in\G^\#$ can be written uniquely as $g=\ga\cdot s$ with $\ga\in\G$ and $s\in\mathbf{S}_d$. Moreover the map $\tilde{\phi}=\pi\circ\phi$ ($\pi$ being the natural projection from $\G^\#$ to $\G\backslash \G^\#$) maps the Lebesgue measure on $[0,1)^d$ to the normalized $\G^\#$-invariant measure on $\G\backslash \G^\#$. For a given function $f:\G\to\mathbb{C}$ let $f^\#:\G^\#\to\mathbb{C}$ be such that $f^\# (\ga\cdot s)=f(\ga)$ for all $\ga\in\G$ and $s\in\mathbf{S}_d$. Then
\[\|f^\#\|_{L^2(\G^\#)}^2 =\int_{\G^\#} |f^\# (g)|^2\,dg =\sum_{\ga\in\G} \int_{\mathbf{S}_d} |f^\#(\ga\cdot s)|^2\,ds = \sum_{\ga\in\G} |f(\ga)|^2 =\|f\|_{L^2(\G)}^2.\]
Also
\[\bH f^\# (\ga\cdot s)=\sum_{n\in\Z}K(n) f^\# (A(n)^{-1}\cdot\ga\cdot s)=\sum_{n\in\Z}K(n) f (A(n)^{-1}\cdot\ga)=Hf(\ga),\] thus $\bH f^\#=(H f)^\#$ and hence the operators $\bH$ and $H$ have the same norm.

The advantage of Theorem \ref{Main2.1} is that it is easier to reduce it to a certain universal case. For integers $d\geq 1$ we define
\begin{equation*}
Y_d=\{(l_1,l_2)\in\mathbb{Z}\times\mathbb{Z}:0\leq l_2<l_1\leq d\}
\end{equation*}
and the ``universal'' step-two nilpotent Lie groups $\G_0^\#=\G_0^\#(d)$
\begin{equation*}
\G_0^\#=\{(x_{l_1l_2})_{(l_1,l_2)\in Y_d}:x_{l_1l_2}\in\mathbb{R}\},
\end{equation*}
with the group multiplication law
\begin{equation*}
[x\cdot y]_{l_1l_2}=
\begin{cases}
x_{l_10}+y_{l_10}&\text{ if }l_1\in\{1,\ldots,d\}\text{ and }l_2=0,\\
x_{l_1l_2}+y_{l_1l_2}+x_{l_10}y_{l_20}&\text{ if }l_1\in\{1,\ldots,d\}\text{ and }l_2\in\{1,\ldots,l_1-1\}.
\end{cases}
\end{equation*}

Let $\G_0=\G_0(d)$ denote the discrete subgroup $\G_0=\G_0^\#\cap\Z^{|Y_d|}$. Let $A_0:\mathbb{R}\to\G_0^\#$ denote the polynomial map
\begin{equation}\label{tra3}
[A_0(x)]_{l_1l_2}=
\begin{cases}
x^{l_1}&\text{ if }l_2=0,\\
0&\text{ if }l_2\neq 0,
\end{cases}
\end{equation}
and notice that $A_0(\Z)\subseteq\G_0$.

\begin{lemma}\label{morphism}
Assuming $\G^\#$ and $A$ are defined as before, there is $d$ sufficiently large and a group morphism $T:\G_0\to\G^\#$ such that
\begin{equation}\label{tra9}
A(n)=T(A_0(n))\qquad\text{ for any }n\in\mathbb{Z}.
\end{equation}
\end{lemma}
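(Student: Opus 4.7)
The plan is to realize $T$ as the restriction to $\G_0$ of a Lie group morphism $\widetilde{T}\colon\G_0^\#\to\G^\#$, exploiting the fact that $\G_0^\#(d)$ is the free step-two nilpotent Lie group on $d$ generators.

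First I would verify by a direct induction that $A_0(n)=g_1^n g_2^{n^2}\cdots g_d^{n^d}$, where $g_l\in\G_0$ denotes the element with $[g_l]_{l,0}=1$ and all other coordinates zero. Indeed, a partial product $g_1^{n_1}\cdots g_k^{n_k}$ is supported only on the coordinates $(1,0),\ldots,(k,0)$, so when it is multiplied on the right by $g_{k+1}^{n_{k+1}}$ the bilinear correction $x_{l_1,0}y_{l_2,0}$ in the group law of $\G_0^\#$ vanishes: the factor $y_{l_2,0}$ forces $l_2=k+1$, while only coordinates $l_1\leq k$ of the first factor are non-zero, contradicting $l_1>l_2$. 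A similar direct calculation shows that the group commutator of $g_{l_1}$ and $g_{l_2}$ (for $l_1>l_2\geq 1$) has $(l_1,l_2)$-coordinate equal to $1$ and all others zero; since $\dim\G_0^\#=d+\binom{d}{2}$ matches the dimension of the free step-two nilpotent Lie group on $d$ generators, $\G_0^\#$ is precisely that free group. Consequently, for any choice of $h_1,\ldots,h_d\in\G^\#$ there exists a unique Lie group morphism $\widetilde{T}$ with $\widetilde{T}(g_l)=h_l$, and then $\widetilde{T}(A_0(n))=h_1^n h_2^{n^2}\cdots h_d^{n^d}$.

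It remains to choose $d$ and $h_l=(a_l,b_l)\in\RRR^{d_1}\times\RRR^{d_2}$ so that this product equals $A(n)=(p(n),q(n))$ identically in $n$, working in the exponential coordinates of the second kind on $\G^\#$ from \eqref{mult}. The step-two identity $(x,y)^m=(mx,\,my+\binom{m}{2}R(x,x))$ together with an induction on the number of factors yields
\begin{equation*}
h_1^n h_2^{n^2}\cdots h_d^{n^d}=\Bigl(\sum_{l=1}^d n^l a_l,\;\sum_{l=1}^d n^l b_l+E(n)\Bigr),
\end{equation*}
where $E(n):=\sum_{l=1}^d \binom{n^l}{2}R(a_l,a_l)+\sum_{1\leq j<k\leq d}n^{j+k}R(a_j,a_k)$. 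Letting $d_3$ denote the maximal degree of the coordinate polynomials of $A$, matching the first coordinate forces $a_l$ to be the coefficient of $n^l$ in $p(n)$ for $1\leq l\leq d_3$ and $a_l=0$ for $d_3<l\leq d$. Once these are fixed, $E(n)$ is a polynomial of degree at most $2d_3$ with $E(0)=0$. Taking $d=2d_3$ and setting $b_l$ equal to the coefficient of $n^l$ in $q(n)-E(n)$ for $l=1,\ldots,d$ then makes the second coordinate identically equal to $q(n)$, so $T:=\widetilde{T}|_{\G_0}$ satisfies \eqref{tra9}, since both sides are polynomials in $n$ which agree coefficientwise.

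The main technical point is the bookkeeping of the bilinear contributions: the cross terms $R(a_j,a_k)n^{j+k}$ forced by the non-commutativity of $\G^\#$ push the degree of the second coordinate all the way up to $2d_3$, and one must enlarge $d$ beyond $d_3$ precisely in order to supply the additional ``central'' generators (those with $a_l=0$) that can absorb the excess higher-degree terms. Everything else reduces to a routine computation in exponential coordinates of the second kind.
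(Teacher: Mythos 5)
Your proof is correct and follows essentially the same route as the paper: both choose $d=2d_3$, send $g_l$ to an element whose first component is the $l$-th coefficient of the horizontal polynomial (zero for $l>d_3$), and then solve for the central components to absorb the degree-$\leq 2d_3$ terms produced by the bilinear form $R$. The only cosmetic difference is that you justify the extension of $T$ via the universal property of $\G_0^\#$ as the free step-two nilpotent Lie group and then restrict to $\G_0$, whereas the paper writes $T$ explicitly on $\G_0$ using the normal form $x=g_1^{x_{10}}\cdots g_d^{x_{d0}}\prod(g_{l_1}g_{l_2}g_{l_1}^{-1}g_{l_2}^{-1})^{x_{l_1l_2}}$ and centrality of commutators.
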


\begin{proof}[Proof of Lemma \ref{morphism}] Set
\begin{equation*}
d=2d_3
\end{equation*}
and let $g_1,\ldots,g_d$ denote the generators of the group $\mathbb{G}_0$,
\begin{equation*}
[g_m]_{l_1l_2}=\begin{cases}1&\text{ if }l_1=m\text{ and }l_2=0,\\
0&\text{ otherwise}.
\end{cases}
\end{equation*}
We notice that any group morphism $T:\G_0\to\G^\#$ is uniquely determined by the values $ T(g_1),\ldots,T(g_d)$. Indeed, any element
\begin{equation*}
x=(x_{l_1l_2})_{(l_1,l_2)\in Y_d}\in\G_0,\qquad x_{l_1l_2}\in\mathbb{Z},
\end{equation*}
can be written in the form
\begin{equation*}
x=g_1^{x_{10}}\cdot\ldots\cdot g_d^{x_{d0}}\cdot \prod_{1\leq l_2<l_1\leq d}(g_{l_1}g_{l_2}g_{l_1}^{-1}g_{l_2}^{-1})^{x_{l_1l_2}}.
\end{equation*}
Therefore, if $T(g_l)=h_l\in\G^\#$ then $T$ is uniquely defined by
\begin{equation*}
T(x)=h_1^{x_{10}}\cdot\ldots\cdot h_d^{x_{d0}}\cdot \prod_{1\leq l_2<l_1\leq d}(h_{l_1}h_{l_2}h_{l_1}^{-1}h_{l_2}^{-1})^{x_{l_1l_2}},\qquad \text{ if }x=(x_{l_1l_2})_{(l_1,l_2)\in Y_d}\in\G_0.
\end{equation*}
It is easy to verify that this defines indeed a group morphism, using the fact that the elements $h_{l_1}h_{l_2}h_{l_1}^{-1}h_{l_2}^{-1}$ are in the center of the group $\G^\#$.

Assume that
\begin{equation}\label{tra4}
A(n)=\Big(\sum_{i=1}^{d_3}\al_{i}n^i,\sum_{i=1}^{d_3}\beta_{i}n^i\Big),\qquad \alpha_1,\ldots,\alpha_{d_3}\in\mathbb{R}^{d_1},\, \beta_1,\ldots,\beta_{d_3}\in\mathbb{R}^{d_2}.
\end{equation}
We define
\begin{equation*}
T(g_l)=\begin{cases}
(\alpha_l,\gamma_l)&\text{ if }l\in\{1,\ldots,d_3\},\\
(0,\gamma_l)&\text{ if }l\in\{d_3+1,\ldots,d\},
\end{cases}
\end{equation*}
for some vectors $\gamma_1,\ldots,\gamma_d\in \mathbb{R}^{d_2}$ to be fixed, and extend $T$ as a group morphism from $\G_0\to\G^\#$. Since
\begin{equation*}
A_0(n)=g_1^n\cdot\ldots\cdot g_d^{n^d},
\end{equation*}
it follows that
\begin{equation*}
T(A_0(n))=(\sum_{i=1}^{d_3}\alpha_in^i,\sum_{i=1}^d\gamma_in^i+\sum_{i=1}^{2d_3}\rho_in^i),
\end{equation*}
for some coefficients $\rho_1,\ldots\rho_{2d_3}$ that depend only on $(\alpha_i)_{i\in\{1,\ldots,d_3\}}$ and the bilinear form $R$. The desired identity $T(A_0(n))=A(n)$ can be arranged by choosing the vectors $\gamma_1,\ldots,\gamma_d$ appropriately.
\end{proof}

Assume now that we could prove the following  particular case of Theorem \ref{Main1}:

\begin{theorem}\label{Main2}
For any $d\geq 1$, $R\geq 1$, and $F:\mathbb{G}_0\to\mathbb{C}$ let
\begin{equation*}
(H^R_0F)(g_0)=\sum_{|n|\leq R}K(n)F(A_0(n)^{-1}\cdot g_0),
\end{equation*}
where $A_0:\mathbb{Z}\to\G_0$ is as in \eqref{tra3} and $K$ is as in \eqref{CalZyg}. Then
\begin{equation*}
\|H^R_0F\|_{L^2(\G_0)}\lesssim_d \|F\|_{L^2(\G_0)}\qquad\text{ uniformly in }R.
\end{equation*}
\end{theorem}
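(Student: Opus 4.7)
My plan is to implement the Cotlar--Stein strategy modified by the additional orthogonality lemma, exactly along the lines sketched in the introduction. I would first perform a standard dyadic decomposition $K=\sum_{j\geq 0}K_j$, with each $K_j$ supported in $[-2^{j+3},2^{j+3}]$, of mean zero, and satisfying $2^j|K_j(t)|+2^{2j}|K'_j(t)|\lesssim 1$. Correspondingly define dyadic operators $H_jf(g)=\sum_{n\in\mathbb{Z}}K_j(n)f(A_0(n)^{-1}\cdot g)$ on $\mathbb{G}_0$; since the parameter $R$ only imposes an extra truncation, $H_0^R$ equals $\sum_{j\leq C\log R}H_j$ up to a harmless tail, and so it suffices to establish $\big\|\sum_{j\geq 0}H_j\big\|_{L^2(\mathbb{G}_0)\to L^2(\mathbb{G}_0)}\lesssim 1$ uniformly.

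Next I would carry out the kernel analysis of Proposition \ref{majarcs}. For $r=r(d)$ large enough, the $2r$-fold iterated operators $(H_j^\ast H_j)^r$ and $(H_jH_j^\ast)^r$ have convolution kernels which, modulo negligible remainders coming from minor-arc exponential sum bounds (Proposition \ref{minarcs}, relying on the Weyl--Davenport--Birch estimates in section \ref{oscil}), decompose as sums over rationals $a/q$ of major-arc pieces of the form \eqref{introd9}: $\sum_{a/q}S^{(r)}(a/q)\,e^{2\pi ih\cdot a/q}K_j^{(r)}(h)$, with fast decay of $S^{(r)}(a/q)$ in $q$ and with $K_j^{(r)}$ essentially a Calder\'on--Zygmund kernel adapted to the non-isotropic dilations of $\mathbb{G}_0^\#$. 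From this description one derives the separated-scales bound \eqref{introd7}, and hence, by Cotlar--Stein applied only within dyadic blocks of length $O(1)$, the uniform estimate $\big\|\sum_{j\in[(1-\kappa)J,J]}H_j\big\|_{L^2\to L^2}\lesssim 1$.

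Having this block estimate, I would set $J_{m+1}\geq 2J_m$ and $S_m=\sum_{j\in[(1-\kappa)J_m,J_m]}H_j$, and apply the orthogonality lemma of section \ref{orthog}. The first hypothesis in \eqref{introd6} is exactly the block estimate just proved. The remaining two hypotheses decompose into two sublemmas: first, uniform-in-$n$ bounds on the operator norms of the partial sums $\sum_{k=m}^n(S_k^\ast S_k)^{p_0}$ and $\sum_{k=m}^n(S_kS_k^\ast)^{p_0}$ (Lemma \ref{separated3}); second, a gain of $2^{-\delta m}$ under left composition with $S_{m-1}^\ast$ or $S_{m-1}$ (Lemma \ref{separated5}). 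Lemma \ref{separated3} would be obtained by extending the Proposition \ref{majarcs} description to the blocks, exploiting that the major-arc pieces at different scales are attached to the same rational lattice $\{a/q\}$ and so assemble into a single well-behaved sum of non-isotropic Calder\'on--Zygmund operators on $\mathbb{G}_0^\#$, for which boundedness is classical. Lemma \ref{separated5} would then follow by pairing $S_{m-1}$ with the leftmost major-arc factor of the iterated kernel and exploiting the frequency separation $J_{m-1}\ll J_k$, $k\geq m$: an integration by parts (or a direct stationary-phase estimate) against the oscillation of the $K_{J_k}^{(r)}$ factor produces the required factor of $2^{-\delta m}$.

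The main obstacle, as flagged in the footnote following \eqref{introd9}, is the kernel description of Proposition \ref{majarcs} itself, especially at the level of blocks: one must control, simultaneously and uniformly in the indices $j_1,k_1,\dots,j_r,k_r$, the oscillatory exponential sums produced by iterated convolution along $A_0$, whose phases are polynomials in many variables coming from the group law \eqref{mult}. This is precisely where the step-two hypothesis is essential, because the phase polynomials arising from two-fold products remain quadratic in the commutator variables and thus amenable to the elementary Weyl-type bounds of section \ref{oscil}; at step $d\geq 3$ the phases become genuinely higher-order in the central coordinates and the combinatorics appears to be out of reach of the methods of this paper. I would therefore devote the bulk of the work to the careful choice of non-isotropic dilations and the Weyl-differencing bookkeeping that make the minor-arc analysis of Proposition \ref{minarcs} go through.
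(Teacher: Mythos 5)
Your proposal follows essentially the same route as the paper: dyadic decomposition into the $H_j$, the major/minor-arc kernel description of the $2r$-fold products (Propositions \ref{majarcs} and \ref{minarcs}, via the Weyl--Davenport--Birch estimates of section \ref{oscil}), the within-block Cotlar--Stein estimate (Lemma \ref{separated2}), the uniform bound on long sums of $(S_k^\ast S_k)^r$ and $(S_kS_k^\ast)^r$ (Lemma \ref{separated3}), the $2^{-\delta m}$ gain under composition with $S_m$ (Lemma \ref{separated5}), and finally the almost-orthogonality lemma of section \ref{orthog}. The one point where your sketch misstates the mechanism is Lemma \ref{separated5}: the gain $2^{-\delta m}$ does not come from stationary phase against the oscillation of the high-frequency factor $K_{J_k}^{(r)}$ (which is smooth, not oscillatory, at scale $2^{J_m}$), but rather from the smallness of the complete sums $\sum_t K_j(qt+y)$ of the low-frequency Calder\'on--Zygmund pieces paired with the rational phase $e^{2\pi i(\cdot)\cdot a/q}$ for $q\leq 2^{\eps J_m}$ (the bound \eqref{tj57}), together with the decay of $S(a/q)$ for large $q$; the frequency separation $J_m\ll J_n$ only serves to freeze the high-frequency kernel.
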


It is not hard to see that Theorem \ref{Main2} would imply Theorem \ref{Main1}. This follows from the standard transference principle, see \cite[Proposition 5.1]{RiSt2}. Indeed, given a polynomial map $A:\mathbb{R}\to \G^\#$ with $A(0)=0$, we fix a group morphism $T:\G_0\to\G^\#$ such as $A(n)=T(A_0(n))$, $n\in\Z$. Then we define the isometric representation $\pi$ of $\G_0$ on $L^2(\G^\#)$,
\begin{equation}\label{represe}
\pi(g_0)(f)(g)=f(T(g_0^{-1})\cdot g),\qquad g_0\in\G_0,\,f\in L^2(\G^\#),\,g\in\G^\#.
\end{equation}
For $R\geq 1$ we define
\begin{equation*}
\begin{split}
&K^R:\mathbb{Z}\to\mathbb{C},\qquad K^R(n)=K(n)\mathbf{1}_{[-R,R]\cap\mathbb{Z}}(n),\\
&(H^Rf)(g)=\sum_{n\in\mathbb{Z}}K^R(n)f(A(n)^{-1}\cdot g),\qquad f\in C_0(\G^\#).
\end{split}
\end{equation*}
Then, for any bounded open set $U\subseteq\G_0$, $R\geq 1$, and $f\in C_0(\G^\#)$
\begin{equation*}
\|H^Rf\|_{L^2(\G^\#)}^2=\frac{1}{|U|}\int_U\int_{\G^\#}|\pi(g_0^{-1})(H^Rf)(g)|^2\,dgdg_0.
\end{equation*}
The definitions show that
\begin{equation*}
\pi(g_0^{-1})(H^Rf)(g)=(H^Rf)(T(g_0)\cdot g)=\sum_{n\in\Z}K^R(n)f(T(A_0(n)^{-1}\cdot g_0)\cdot g)=H_0^R(F_g)(g_0),
\end{equation*}
where, by definition,
\begin{equation*}
F_g(h_0)=f(T(h_0)\cdot g).
\end{equation*}
Notice that, for $g_0\in U$,
\begin{equation*}
H_0^R(F)(g_0)=H_0^R(F\cdot \mathbf{1}_{U'_R})(g_0),\qquad U'_R=\{(\um,\vm)\cdot h:h\in U,\,|\um|^2+|\vm|<C_dR^{2d}\}.
\end{equation*}
Therefore, using these identities and Theorem \ref{Main2},
\begin{equation*}
\begin{split}
\|H^Rf\|_{L^2(\G^\#)}^2&=\frac{1}{|U|}\int_U\int_{\G^\#}|H_0^R(F_g\cdot\mathbf{1}_{U'_R})(g_0)|^2 \,dgdg_0\\
&\lesssim_d\frac{1}{|U|}\int_{\G_0}\int_{\G^\#}|(F_g\cdot\mathbf{1}_{U'_R})(h_0)|^2\,dgdh_0\\
&\lesssim_d\frac{1}{|U|}\int_{\G_0}\int_{\G^\#}|f(T(h_0)\cdot g)|^2\cdot\mathbf{1}_{U'_R}(h_0)\,dgdh_0\\
&\lesssim_d\frac{|U'_R|}{|U|}\|f\|^2_{L^2(\G^\#)}.
\end{split}
\end{equation*}
For $R$ fixed we can fix $U$ large enough such that $|U'_R|/|U|\leq 2$. Thus $\|H^Rf\|_{L^2(\G)}\lesssim_d\|f\|_{L^2(\G)}$ uniformly in $R$, as desired.

The rest of the paper is concerned with the proof of Theorem \ref{Main2}. We will assume from now on that $d$ is fixed, and all the implied constants are allowed to depend on $d$.

\section{The main kernels: identities and estimates}\label{oscl2}

We fix $\eta_0:\mathbb{R}\to[0,1]$ a smooth even function supported in the interval $[-2,2]$ and equal to $1$ in the interval $[-1,1]$. We define
\begin{equation*}
\eta_j(t)=\eta_0(2^{-j}t)-\eta_0(2^{-j+1}t),\qquad t\in\mathbb{R},\,j=1,2,\ldots,\qquad1=\sum_{j=0}^\infty \eta_j.
\end{equation*}
For $\lambda\geq 1$ let $\widetilde{\eta}_{\leq \lambda}:\mathbb{R}^{|Y_d|}\to[0,1]$,
\begin{equation*}
\widetilde{\eta}_{\leq \lambda}(x)=\prod_{(l_1,l_2)\in Y_d}\eta_0(x_{l_1l_2}/2^{\lambda(l_1+l_2)}).
\end{equation*}
For $x=(x_{l_1l_2})_{(l_1,l_2)\in Y_d}\in\mathbb{R}^{|Y_d|}$ and $\Lambda\in(0,\infty)$ let
\begin{equation*}
\Lambda\circ x=(\Lambda^{l_1+l_2}x_{l_1l_2})_{(l_1,l_2)\in Y_d}\in\mathbb{R}^{|Y_d|},\qquad |x|=\sum_{(l_1,l_2)\in Y_d}|x_{l_1l_2}|.
\end{equation*}
Let
\begin{equation*}
\mathcal{D}^\#_\Lambda=\{x\in \mathbb{R}^{|Y_d|}:|(1/\Lambda)\circ x|<1\},\qquad \mathcal{D}_\Lambda=\mathcal{D}^\#_\Lambda\cap\Z^{|Y_d|}.
\end{equation*}

For $j=1,2,\ldots$ let
\begin{equation}\label{mc1}
\begin{split}
&K_j(t)=K(t)\eta_j(t)+c_j2^{-j}\eta_j(t)-c_{j+1}2^{-j-1}\eta_{j+1}(t),\\
&\text{ where }\,\,c_j=2\Big(\int_{\mathbb{R}}K(t)\big[\sum_{k=0}^{j-1}\eta_k(t)\big]\,dt\Big)\Big(\int_{\mathbb{R}}\eta_0(t)\,dt\Big)^{-1}.
\end{split}
\end{equation}
Using this definition and the assumption \eqref{CalZyg}, it follows that, for $j=1,2,\ldots$
\begin{equation}\label{mc2}
\begin{split}
&2^j|K_j(t)|+2^{2j}|K'_j(t)|\lesssim \mathbf{1}_{[-2^{j+3},2^{j+3}]}(t),\qquad\int_{\mathbb{R}}K_j(t)\,dt=0,\qquad \sup_{j=1,2,\ldots}|c_j|\lesssim 1,\\
&\sum_{j'=1}^{j}K_{j'}(t)=K(t)\eta_0(2^{-j}t)-K(t)\eta_0(t)+c_12^{-1}\eta_1(t)-c_{j+1}2^{-j-1}\eta_{j+1}(t).
\end{split}
\end{equation}
For $f\in L^2(\G_0)$ let
\begin{equation*}
(H_jf)(g)=\sum_{n\in\Z}K_j(n)f(A_0(n)^{-1}\cdot g).
\end{equation*}

In this section we use the notation and the estimates in section \ref{oscil}, in particular Proposition \ref{minarcs} and Lemma \ref{intest}. Any vector in $\mathbb{Q}^m$ has a unique representation in the form $a/q$, with $q\in\{1,2,\ldots\}$, $a\in\Z^m$, and $(a,q)=1$. For $R\in[1,\infty]$ let $\mathcal{S}_R$ denote the set of irreducible fractions in $(\mathbb{Q}\cap(0,1])^{|Y_d|}$ with denominators $\leq R$, i.e.
\begin{equation*}
\mathcal{S}_R=\{a/q=(a_{l_1l_2}/q)_{(l_1,l_2)\in Y_d}:\,1\leq q\leq R,\,a_{l_1l_2}\in Z_q,\,(a,q)=1\}.
\end{equation*}
We fix once and for all three parameters $\eps,r,\kappa$, $0<\kappa\ll1/r\ll\eps\ll 1$, $r\in 2^{\Z_+}$, depending only on $d$ and satisfying
\begin{equation}\label{mc9.5}
 \eps=\overline{C}^{-1}(10d)^{-10},\qquad -2\overline{C}+r\eps/(2\overline{C})\geq (10d)^{10},\qquad \kappa r^2=1,
\end{equation}
where $\overline{C}$ is the constant in Proposition \ref{minarcs} and Lemma \ref{intest}.

For $a/q\in \mathcal{S}_{\infty}$ let
\begin{equation}\label{mc11}
S(a/q)=q^{-2r}\sum_{v,w\in Z_q^r}e^{-2\pi iD(v,w)\cdot a/q},\qquad \widetilde{S}(a/q)=q^{-2r}\sum_{v,w\in Z_q^r}e^{-2\pi i\widetilde{D}(v,w)\cdot a/q}.
\end{equation}
where $D,\widetilde{D}$ are defined in \eqref{pro0.4} and \eqref{pro0.5}.

\begin{lemma}\label{Saq}
For any $a/q\in \mathcal{S}_{\infty}$
\begin{equation}\label{mc7}
|S(a/q)|+|\widetilde{S}(a/q)|\lesssim q^{-(10d)^{10}}.
\end{equation}
\end{lemma}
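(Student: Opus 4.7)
The sums $S(a/q)$ and $\widetilde S(a/q)$ are complete exponential sums over the lattice $(Z_q)^{2r}$: after expanding the dot product coordinate by coordinate, the phase $-2\pi i D(v,w)\cdot a/q$ is a polynomial in $(v,w)\in\mathbb{Z}^{2r}$ of degree at most $d$ in each variable, whose coefficients are precisely the rationals $a_{l_1l_2}/q$, $(l_1,l_2)\in Y_d$. Since $\gcd(a,q)=1$, at least one such coefficient is an irreducible fraction with denominator exactly $q$. The plan is to obtain the required polynomial decay in $q$ by applying the Weyl--Davenport--Birch minor arc estimate of Section~\ref{oscil} (Proposition~\ref{minarcs}) directly to these complete sums, treating $a/q$ itself as the ``frequency'' parameter.

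First I would normalize: setting $N=q$, one has
\[
S(a/q)=N^{-2r}\sum_{v,w\in Z_N^{r}} e^{-2\pi i D(v,w)\cdot a/q},
\]
which fits the framework of Proposition~\ref{minarcs} with length parameter $N=q$. The proposition produces a dichotomy: either it yields directly the Weyl-type bound $|S(a/q)|\lesssim q^{-r\eps/\overline C}$, or it supplies a simultaneous rational approximation $\widetilde a/\widetilde q$ to the top-degree coefficient vector of the phase, with $\widetilde q\leq q^{2\overline C}$ and error much smaller than $1/q$. In the second alternative I would use that every nonzero coefficient appearing in the top-degree part of the phase is already an irreducible ratio with denominator exactly $q$, so such an approximation would force $q\lesssim q^{2\overline C}$ to improve upon the trivial representation, which in view of $-2\overline C+r\eps/(2\overline C)\geq(10d)^{10}$ from \eqref{mc9.5} can only happen when $q$ is bounded by a constant; for such $q$ the claimed bound is trivial. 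Combining the two alternatives and using $\eps=\overline C^{-1}(10d)^{-10}$ together with the choice of $r$ in \eqref{mc9.5}, one gets $|S(a/q)|\lesssim q^{-(10d)^{10}}$. The identical argument with $\widetilde D$ in place of $D$ handles $\widetilde S(a/q)$.

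The main obstacle I anticipate is verifying the non-degeneracy hypothesis of Proposition~\ref{minarcs} for the particular polynomials $D$ and $\widetilde D$ defined in \eqref{pro0.4}--\eqref{pro0.5}. The Weyl-type estimate only gives power decay when the top-degree part of the phase genuinely depends on each component of $a$ that one wishes to kill; one must therefore check that for each coordinate index $(l_1,l_2)\in Y_d$, the coefficient $a_{l_1l_2}$ couples to a nontrivial top-order monomial in $(v,w)$. Because $D,\widetilde D$ arise from expanding the iterated product $A_0(n_1)A_0(m_1)^{-1}\cdots A_0(n_r)A_0(m_r)^{-1}$ in the step-$2$ universal group $\G_0^\#$, the bilinear commutator structure \eqref{mult} distributes the coordinates $(l_1,l_2)$ across distinct families of top-order monomials in $(v,w)$, and this structural separation is exactly what triggers the full strength of Proposition~\ref{minarcs} in every coordinate direction simultaneously.
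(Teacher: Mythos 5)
There is a genuine gap, and it sits exactly where the real difficulty of this lemma lies. Your argument rests on the claim that, because $(a,q)=1$, ``every nonzero coefficient appearing in the top-degree part of the phase is already an irreducible ratio with denominator exactly $q$.'' That is false: $(a,q)=1$ only says that the \emph{collection} $\{a_{l_1l_2}\}$ has no common factor with $q$. Each individual fraction $a_{l_1l_2}/q$ reduces to some $a'_{l_1l_2}/q_{l_1l_2}$ whose denominator $q_{l_1l_2}$ may be far smaller than $q$; the only a priori constraint is $q\leq \prod q_{l_1l_2}$. Consequently the hypothesis of Proposition \ref{minarcs} --- namely that \emph{some} coordinate admits an approximation with denominator in the window $[P^\eps,P^{l_1+l_2-\eps}]$, here with $P=q$ --- can fail for every coordinate simultaneously: the dangerous configuration is $q_{l_1l_2}\leq q^\eps$ for all $(l_1,l_2)$ with $l_1+l_2\geq 2$ together with $q_{10}\geq q^{1-\eps}$, and since the window for the linear coordinate $(1,0)$ is $[q^\eps,q^{1-\eps}]$, a reduced denominator $q_{10}$ close to $q$ lies outside it. Your ``second alternative'' does not rescue this case: the inference ``$q\lesssim q^{2\overline C}$ \ldots can only happen when $q$ is bounded'' is a non sequitur ($q\leq q^{2\overline C}$ holds for all $q\geq 1$), and Proposition \ref{minarcs} as stated is a conditional estimate, not a dichotomy that outputs a rational approximation in the complementary case.

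The paper closes precisely this residual case by a separate, exact cancellation: when $q_{l_1l_2}\leq q^\eps$ for all nonlinear coordinates and $q_{10}\geq q^{1-\eps}$, one sets $Q=\mathrm{lcm}\{q_{l_1l_2}: l_1+l_2\geq 2\}$, splits $w=Qx+y$, and uses that $D(v,w)_{10}=\sum_j(w_j-v_j)$ is \emph{linear} in $w$, so the inner sum over $x\in Z_{q/Q}^r$ is a full geometric series for the character $e^{-2\pi i Q x_1 a_{10}/q_{10}}$, which vanishes identically because $q_{10}>Q$ while $q_{10}\mid q$. Hence $S(a/q)=0$ in that regime, and the Weyl-type bound from Proposition \ref{minarcs} (with $P=q$) is only needed in the complementary regime where some $q_{l_1l_2}\in[q^\eps,q^{l_1+l_2-\eps}]$. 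This exact-vanishing step, driven by the presence of the linear coordinate $(1,0)$ in $D$ and $\widetilde D$, is the missing idea in your proposal; the non-degeneracy of the top-degree part of $D,\widetilde D$, which you flag as the main obstacle, is not the issue here (it is already packaged inside Proposition \ref{minarcs}).
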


\begin{proof}[Proof of Lemma \ref{Saq}] For $(l_1,l_2)\in Y_d$ we write
\begin{equation*}
\frac{a_{l_1l_2}}{q}=\frac{a'_{l_1l_2}}{q_{l_1l_2}},\qquad (a'_{l_1l_2},q_{l_1l_2})=1,\qquad 1\leq q_{l_1l_2}\leq q.
\end{equation*}
The bound follows from Proposition \ref{minarcs} with $P=q$ if
\begin{equation*}
\text{ there is }(l_1,l_2)\in Y_d\text{ such that }q^\eps\leq q_{l_1l_2}\leq q^{l_1+l_2-\eps}.
\end{equation*}
Otherwise, since
\begin{equation*}
\sup_{(l_1,l_2)\in Y_d}q_{l_1l_2}\leq q\leq\prod_{(l_1,l_2)\in Y_d}q_{l_1l_2},
\end{equation*}
we necessarily have
\begin{equation}\label{mc100}
q_{l_1l_2}\leq q^\eps\text{ if }l_1+l_2\geq 2\text{ and }q_{10}\geq q^{1-\eps}.
\end{equation}
In this case we may assume $q\geq 2$, and let $Q$ denote the smallest common multiple of $q_{l_1l_2}$, $l_1+l_2\geq 2$, $q/Q\in \{2,3,\ldots\}$. Then we estimate, using the formula \eqref{pro0.4},
\begin{equation*}
\begin{split}
|S(a/q)|&\leq q^{-r}\sup_{v\in Z_q^r}\Big|\sum_{w\in Z_q^r}e^{-2\pi iD(v,w)\cdot a/q}\Big|\leq q^{-r}\sup_{v\in Z_q^r}\sum_{y\in Z_Q^r}\Big|\sum_{x\in Z_{q/Q}^r}e^{-2\pi iD(v,Qx+y)\cdot a/q}\Big|\\
&\leq q^{-r}\sup_{v\in Z_q^r}\sum_{y\in Z_Q^r}\Big|\sum_{x\in Z_{q/Q}^r}e^{-2\pi iD(v,Qx+y)_{10}\cdot a_{10}/q_{10}}\Big|=0,
\end{split}
\end{equation*}
which suffices. The bound on $|\widetilde{S}(a/q)|$ is similar.
\end{proof}

The main goal in this section is to describe the operators
\begin{equation*}
H_{j_1}H_{j_2}^\ast\ldots H_{j_{2r-1}}H^\ast_{j_{2r}}\qquad\text{ and }\qquad H^\ast_{j_1}H_{j_2}\ldots H_{j_{2r-1}}^\ast H_{j_{2r}},
\end{equation*}
for suitable values of $j_1,\ldots,j_{2r}$. More precisely, we prove the following:

\begin{proposition}\label{majarcs}
Assume $C(d)$ is a sufficiently large constant, $J\in[C(d),\infty)$, and $j_1,k_1\ldots,j_{r},k_r\in[J(1-\kappa),J]\cap\Z$. Then
\begin{equation*}
\begin{split}
&(H_{j_1}^\ast H_{k_1}\ldots H_{j_r}^\ast H_{k_r}F)(g)=\sum_{h\in \G_0}[K_{j_1,k_1,\ldots,j_r,k_r}(h)+E_{j_1,k_1,\ldots,j_r,k_r}(h)]F(h^{-1}\cdot g),\\
&(H_{j_1}H_{k_1}^\ast\ldots H_{j_r}H_{k_r}^\ast F)(g)=\sum_{h\in \G_0}[\widetilde{K}_{j_1,k_1,\ldots,j_r,k_r}(h)+\widetilde{E}_{j_1,k_1,\ldots,j_r,k_r}(h)]F(h^{-1}\cdot g),
\end{split}
\end{equation*}
for any $F\in L^2(\G_0)$ and $g\in\G_0$, where
\begin{equation}\label{mc5}
\|E_{j_1,k_1,\ldots,j_r,k_r}\|_{L^1(\G_0)}+\|\widetilde{E}_{j_1,k_1,\ldots,j_r,k_r}\|_{L^1(\G_0)}\lesssim 2^{-J/4}.
\end{equation}
Moreover
\begin{equation}\label{mc6.1}
\begin{split}
&K_{j_1,k_1,\ldots,j_r,k_r}(h)=\widetilde{\eta}_{\leq J+\eps J}(h)\sum_{a/q\in\mathcal{S}_{2^{3d^2\eps J}}}e^{2\pi ih\cdot a/q}S(a/q)\\
&\int_{\mathbb{R}^{|Y_d|}}\int_{\mathbb{R}^r\times\mathbb{R}^r}\prod_{(l_1,l_2)\in Y_d}\eta_0(2^{J(l_1+l_2-2\eps)}\beta_{l_1l_2})G_{j_1,k_1,\ldots,j_r,k_r}(x,y)e^{2\pi i (h-D(x,y))\cdot\beta}\,dxdyd\beta,
\end{split}
\end{equation}
\begin{equation}\label{mc6.2}
\begin{split}
&\widetilde{K}_{j_1,k_1,\ldots,j_r,k_r}(h)=\widetilde{\eta}_{\leq J+\eps J}(h)\sum_{a/q\in\mathcal{S}_{2^{3d^2\eps J}}}e^{2\pi ih\cdot a/q}\widetilde{S}(a/q)\\
&\int_{\mathbb{R}^{|Y_d|}}\int_{\mathbb{R}^r\times\mathbb{R}^r}\prod_{(l_1,l_2)\in Y_d}\eta_0(2^{J(l_1+l_2-2\eps)}\beta_{l_1l_2})G_{j_1,k_1,\ldots,j_r,k_r}(x,y)e^{2\pi i (h-\widetilde{D}(x,y))\cdot\beta}\,dxdyd\beta.
\end{split}
\end{equation}
\end{proposition}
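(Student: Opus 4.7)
The plan is to compute the kernel of each iterated operator directly from the group convolution structure, and then apply a circle-method decomposition inherited from Proposition \ref{minarcs} and Lemma \ref{intest}. I would first expand the operator $H_{j_1}^\ast H_{k_1}\cdots H_{j_r}^\ast H_{k_r}$ as a group convolution with kernel
\[ L(h)=\sum_{v,w\in\Z^r}\Big(\prod_{i=1}^r K_{j_i}(v_i)K_{k_i}(w_i)\Big)\mathbf{1}_{\{D(v,w)=h\}}, \]
where $D(v,w)$ is the polynomial map from \eqref{pro0.4} recording the product $A_0(v_1)^{-1}A_0(w_1)\cdots A_0(v_r)^{-1}A_0(w_r)$ in $\G_0$, and similarly with $\widetilde{D}$ for the other ordering. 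Since each $K_{j_i}$ is supported in $[-2^{J+3},2^{J+3}]$, the variables $v,w$ are automatically confined to a box of side $\lesssim 2^J$, and $h=D(v,w)$ is supported where $|h_{l_1l_2}|\lesssim 2^{J(l_1+l_2)}$, which is consistent with the cutoff $\widetilde{\eta}_{\leq J+\eps J}$ appearing in \eqref{mc6.1}--\eqref{mc6.2}; any contribution outside this scale vanishes identically.

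Next I would use the Fourier identity $\mathbf{1}_{\{D(v,w)=h\}}=\int_{[0,1]^{|Y_d|}}e^{2\pi i(h-D(v,w))\cdot\beta}\,d\beta$ and perform a Hardy--Littlewood decomposition of $\beta$ at scale $Q=2^{3d^2\eps J}$. On a major arc near $a/q\in\mathcal{S}_Q$, I write $\beta=a/q+\gamma$ with $|\gamma_{l_1l_2}|\lesssim 2^{-J(l_1+l_2-2\eps)}$ and split each $v_i,w_i$ into a residue modulo $q$ times a coarse variable. Since $D$ has integer coefficients, the residue-class factor collapses to exactly $q^{2r}S(a/q)$ (respectively $q^{2r}\widetilde{S}(a/q)$) from \eqref{mc11}, while the coarse sum is approximated by the integral over $\mathbb{R}^r\times\mathbb{R}^r$ of
\[ G_{j_1,k_1,\ldots,j_r,k_r}(x,y)=\prod_{i=1}^r K_{j_i}(x_i)K_{k_i}(y_i) \]
against $e^{-2\pi i D(x,y)\cdot\gamma}$, producing exactly the formula in \eqref{mc6.1}. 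The pointwise error in this Poisson-style approximation is bounded using Lemma \ref{intest} together with the Calder\'on--Zygmund estimates \eqref{mc2} on $K_{j_i}$ (in particular the vanishing moment); after summing over $h$ in its $O(2^{J(1+\eps)\cdot|Y_d|\cdot d})$ lattice box and over $a/q\in\mathcal{S}_Q$, the calibration \eqref{mc9.5} makes this part of $E$ comfortably smaller than $2^{-J/4}$ in $L^1$.

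Finally, the minor-arc contribution is handled by Proposition \ref{minarcs}: for every $\beta$ that does not lie within a major arc of $\mathcal{S}_Q$, the oscillatory sum in $(v,w)$ gains a factor of at least $2^{-\eps J(10d)^{10}}$; combined with the volume of the $h$-box and our choice $-2\overline{C}+r\eps/(2\overline{C})\geq(10d)^{10}$, this yields an $L^1$ contribution comfortably below $2^{-J/4}$. The main obstacle will be the middle step, namely keeping the error in the major-arc approximation only polynomially large in $q$ while preserving the $J$-gain: this requires exploiting the exact polynomial structure of $D(v,w)$ (degree, and the fact that its bilinear part lives in the $l_2\neq 0$ coordinates of $\G_0$) so that the Gauss-sum factorization is clean and the smooth remainder satisfies the hypotheses of Lemma \ref{intest}. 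Once the error estimate is assembled, the stated identities \eqref{mc6.1}--\eqref{mc6.2} follow by simply collecting the major-arc terms.
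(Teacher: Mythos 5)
Your proposal follows essentially the same route as the paper's proof: the kernel is written via the Fourier identity for the indicator of $\{D(v,w)=h\}$, the $\beta$-integral is split into major and minor arcs at denominator scale $2^{3d^2\eps J}$, the minor arcs are discarded via Proposition \ref{minarcs}, and on the major arcs the reduction of $v,w$ modulo $q$ produces the Gauss sums $S(a/q)$, $\widetilde{S}(a/q)$ while the remaining coarse sum is replaced by the integral defining \eqref{mc6.1}--\eqref{mc6.2}. The only quibble is a harmless numerical slip in your stated minor-arc gain (the calibration \eqref{mc9.5} actually yields a per-frequency gain of order $2^{-2J(10d)^{10}}$, not $2^{-\eps J(10d)^{10}}$, and the larger gain is what comfortably absorbs the volume of the $h$-box); this does not affect the argument.
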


The functions $G_{j_1,k_1,\ldots,j_r,k_r}$ are defined by
\begin{equation*}
G_{j_1,k_1,\ldots,j_r,k_r}(x,y)=K_{j_1}(x_1)K_{k_1}(y_1)\ldots K_{j_r}(x_r)K_{k_r}(y_r),\quad x,y\in\mathbb{R}^r.
\end{equation*}
The functions $D,\widetilde{D}:\mathbb{R}^r\times\mathbb{R}^r\to\mathbb{R}^{|Y_d|}$ are defined in \eqref{pro0.4} and \eqref{pro0.5}.

\begin{proof}[Proof of Proposition \ref{majarcs}] We only prove the claims for the operators $H_{j_1}^\ast H_{k_1}\ldots H_{j_r}^\ast H_{k_r}$ and the kernels $K_{j_1,k_1,\ldots,j_r,k_r},E_{j_1,k_1,\ldots,j_r,k_r}$; the claims for the  operators $H_{j_1} H_{k_1}^\ast\ldots H_{j_r}H_{k_r}^\ast$ and the kernels $\widetilde{K}_{j_1,k_1,\ldots,j_r,k_r},\widetilde{E}_{j_1,k_1,\ldots,j_r,k_r}$ follow by essentially identical arguments. Recall that $\eps,r,\kappa$ are fixed, depending only on $d$, so all the implicit constants are allowed to depend on $\eps,r,\kappa$.

By definition,
\begin{equation}\label{mc7.5}
\begin{split}
(H_{j_1}^\ast H_{k_1}\ldots H_{j_r}^\ast H_{k_r}F)(g)&=\sum_{n_1,m_1,\ldots,n_r,m_r\in\Z}K_{j_1}(n_1)K_{k_1}(m_1)\ldots K_{j_r}(n_r)K_{k_r}(m_r)\\
&F(A_0(m_r)^{-1}\cdot A_0(n_r)\cdot\ldots \cdot A_0(m_1)^{-1}\cdot A_0(n_1)\cdot g).
\end{split}
\end{equation}
Recalling the definition \eqref{pro0.3} and letting
\begin{equation}\label{mc8}
\begin{split}
L_{j_1,k_1,\ldots,j_r,k_r}(h)=\widetilde{\eta}_{\leq J+\eps J}(h)\int_{[0,1]^{|Y_d|}}&\sum_{n,m\in\Z^r}G_{j_1,k_1,\ldots,j_r,k_r}(n,m)\\
&e^{2\pi i\sum_{(l_1,l_2)\in Y_d}(h_{l_1l_2}-D(n,m)_{l_1l_2})\theta_{l_1l_2}}\,d\theta,
\end{split}
\end{equation}
this becomes
\begin{equation*}
(H_{j_1}^\ast H_{k_1}\ldots H_{j_r}^\ast H_{k_r}F)(g)=\sum_{h\in \G_0}L_{j_1,k_1,\ldots,j_r,k_r}(h)F(h^{-1}\cdot g).
\end{equation*}
It remains to prove that we can decompose $L_{j_1,k_1,\ldots,j_r,k_r}=K_{j_1,k_1,\ldots,j_r,k_r}+E_{j_1,k_1,\ldots,j_r,k_r}$ satisfying the claims in the proposition.

We decompose the integral over $\theta$ in \eqref{mc8} into the contribution of major and minor arcs. Let
\begin{equation}\label{mc9}
\begin{split}
L^1_{j_1,k_1,\ldots,j_r,k_r}&(h)=\widetilde{\eta}_{\leq J+\eps J}(h)\sum_{a/q\in\mathcal{S}_{2^{3d^2\eps J}}}\int_{\mathbb{R}^{|Y_d|}}\sum_{n,m\in\Z^r}G_{j_1,k_1,\ldots,j_r,k_r}(n,m)\\
&e^{2\pi i\sum_{(l_1,l_2)\in Y_d}(h_{l_1l_2}-D(n,m)_{l_1l_2})(a_{l_1l_2}/q+\beta_{l_1l_2})}\prod_{(l_1,l_2)\in Y_d}\eta_0(2^{J(l_1+l_2-2\eps)}\beta_{l_1l_2})\,d\beta.
\end{split}
\end{equation}
In view of the choice of $\eps,r,\kappa$ and the restriction $j_1,k_1,\ldots,j_r,k_r\in[(1-\kappa)J,J]$, it follows from Proposition \ref{minarcs} and \eqref{mc2} that
\begin{equation*}
|L_{j_1,k_1,\ldots,j_r,k_r}(h)-L^1_{j_1,k_1,\ldots,j_r,k_r}(h)|\lesssim 2^{-10d^2J}\widetilde{\eta}_{\leq J+\eps J}(h),\qquad h\in\G_0,
\end{equation*}
which is consistent with the error estimate \eqref{mc5}.

We consider now the sum over $m,n$ in \eqref{mc9}, and rewrite, for $q$ fixed,
\begin{equation*}
\begin{split}
&\sum_{n,m\in\Z^r}G_{j_1,k_1,\ldots,j_r,k_r}(n,m)e^{-2\pi i D(n,m)\cdot(a/q+\beta)}\\
&=\sum_{n,m\in\Z^r}\sum_{v,w\in Z_q^r}G_{j_1,k_1,\ldots,j_r,k_r}(qn+v,qm+w)e^{-2\pi i D(qn+v,qm+w)\cdot\beta}e^{-2\pi iD(v,w)\cdot a/q}\\
&=E'(a/q,\beta)+\sum_{n,m\in\Z^r}\sum_{v,w\in Z_q^r}G_{j_1,k_1,\ldots,j_r,k_r}(qn,qm)e^{-2\pi i D(qn,qm)\cdot\beta}e^{-2\pi iD(v,w)\cdot a/q}.
\end{split}
\end{equation*}
For $q\leq 2^{3d^2\eps J}$ and $\beta=(\beta_{l_1l_2})_{(l_1,l_2)\in Y_d}$, $|\beta_{l_1l_2}|\leq 22^{-J(l_1+l_2-2\eps)}$, we estimate, using \eqref{mc2}, \eqref{pro0.4}, and the assumption $j_1,k_1,\ldots,j_r,k_r\in[(1-\kappa)J,J]$,
\begin{equation*}
|E'(a/q,\beta)|\lesssim 2^{-3J/4}.
\end{equation*}
Therefore, if we define
\begin{equation}\label{mc10}
\begin{split}
L^2_{j_1,k_1,\ldots,j_r,k_r}(h)=&\widetilde{\eta}_{\leq J+\eps J}(h)\sum_{a/q\in\mathcal{S}_{2^{3d^2\eps J}}}\int_{\mathbb{R}^{|Y_d|}}e^{2\pi ih\cdot(a/q+\beta)}\prod_{(l_1,l_2)\in Y_d}\eta_0(2^{J(l_1+l_2-2\eps)}\beta_{l_1l_2})\\
&\sum_{n,m\in\Z^r}\sum_{v,w\in Z_q^r}G_{j_1,k_1,\ldots,j_r,k_r}(qn,qm)e^{-2\pi i D(qn,qm)\cdot\beta}e^{-2\pi iD(v,w)\cdot a/q}\,d\beta,
\end{split}
\end{equation}
it follows that
\begin{equation*}
|L^1_{j_1,k_1,\ldots,j_r,k_r}(h)-L^2_{j_1,k_1,\ldots,j_r,k_r}(h)|\lesssim 2^{-J/2}\prod_{(l_1,l_2)\in Y_d}2^{-J(l_1+l_2)}\widetilde{\eta}_{\leq J+\eps J}(h).
\end{equation*}
This is consistent with the error estimate in \eqref{mc5}.

Finally, it remains to decompose the kernel $L^2_{j_1,k_1,\ldots,j_r,k_r}$. For this we rewrite first
\begin{equation*}
\begin{split}
L^2_{j_1,k_1,\ldots,j_r,k_r}(h)=&\widetilde{\eta}_{\leq J+\eps J}(h)\sum_{a/q\in\mathcal{S}_{2^{3d^2\eps J}}}e^{2\pi ih\cdot a/q}S(a/q)\int_{\mathbb{R}^{|Y_d|}}e^{2\pi ih\cdot\beta}\prod_{(l_1,l_2)\in Y_d}\eta_0(2^{J(l_1+l_2-2\eps)}\beta_{l_1l_2})\\
&q^{2r}\sum_{n,m\in\Z^r}G_{j_1,k_1,\ldots,j_r,k_r}(qn,qm)e^{-2\pi i D(qn,qm)\cdot\beta}\,d\beta,
\end{split}
\end{equation*}
where $S(a/q)$ is defined in \eqref{mc11}. Using the formula \eqref{pro0.4}, we estimate for any $q\leq 2^{3d^2\eps J}$ and $\beta=(\beta_{l_1l_2})_{(l_1,l_2)\in Y_d}$, $|\beta_{l_1l_2}|\leq 22^{-J(l_1+l_2-2\eps)}$,
\begin{equation*}
\begin{split}
\Big|\sum_{n,m\in\Z^r}G_{j_1,k_1,\ldots,j_r,k_r}(qn,qm)e^{-2\pi i D(qn,qm)\cdot\beta}-\int_{\mathbb{R}^r\times\mathbb{R}^r}G_{j_1,k_1,\ldots,j_r,k_r}(qx,qy)e^{-2\pi i D(qx,qy)\cdot\beta}\,dxdy\Big|&\\
\lesssim q^{-2r}2^{-3J/4}.&
\end{split}
\end{equation*}
Thus, with $K_{j_1,k_1,\ldots,j_r,k_r}$ defined as in \eqref{mc6.1}, we have the pointwise bound
\begin{equation*}
|L^2_{j_1,k_1,\ldots,j_r,k_r}(h)-K_{j_1,k_1,\ldots,j_r,k_r}(h)|\lesssim 2^{-J/2}\prod_{(l_1,l_2)\in Y_d}2^{-J(l_1+l_2)}\widetilde{\eta}_{\leq J+\eps J}(h),
\end{equation*}
which is consistent with the error estimate in \eqref{mc5}. This completes the proof of the proposition.
\end{proof}

Assume $J,j_1,k_1,\ldots,j_r,k_r$ are as in Proposition \ref{majarcs} and define
\begin{equation}\label{mc60}
\begin{split}
&P_{j_1,k_1,\ldots,j_r,k_r}(v)=\int_{\mathbb{R}^r\times\mathbb{R}^r} G_{j_1,k_1,\ldots,j_r,k_r}(x,y) e^{-2\pi iD(x,y)\cdot(2^{-J}\circ v)}\,dxdy,\\
&\widetilde{P}_{j_1,k_1,\ldots,j_r,k_r}(v)=\int_{\mathbb{R}^r\times\mathbb{R}^r} G_{j_1,k_1,\ldots,j_r,k_r}(x,y) e^{-2\pi i\widetilde{D}(x,y)\cdot (2^{-J}\circ v)}\,dxdy.
\end{split}
\end{equation}
Notice that the formulas \eqref{mc6.1} and \eqref{mc6.2} become, after changes of variables
\begin{equation}\label{mc6.3}
\begin{split}
K_{j_1,k_1,\ldots,j_r,k_r}(h)&=\widetilde{\eta}_{\leq J+\eps J}(h)\prod_{(l_1,l_2)\in Y_d}2^{-J(l_1+l_2)}\sum_{a/q\in\mathcal{S}_{2^{3d^2\eps J}}}e^{2\pi ih\cdot a/q}S(a/q)\\
&\int_{\mathbb{R}^{|Y_d|}}\prod_{(l_1,l_2)\in Y_d}\eta_0(2^{-2\eps J}v_{l_1l_2})P_{j_1,k_1,\ldots,j_r,k_r}(v)e^{2\pi i (2^{-J}\circ h)\cdot v}\,dv,
\end{split}
\end{equation}
and
\begin{equation}\label{mc6.4}
\begin{split}
\widetilde{K}_{j_1,k_1,\ldots,j_r,k_r}(h)&=\widetilde{\eta}_{\leq J+\eps J}(h)\prod_{(l_1,l_2)\in Y_d}2^{-J(l_1+l_2)}\sum_{a/q\in\mathcal{S}_{2^{3d^2\eps J}}}e^{2\pi ih\cdot a/q}\widetilde{S}(a/q)\\
&\int_{\mathbb{R}^{|Y_d|}}\prod_{(l_1,l_2)\in Y_d}\eta_0(2^{-2\eps J}v_{l_1l_2})\widetilde{P}_{j_1,k_1,\ldots,j_r,k_r}(v)e^{2\pi i (2^{-J}\circ h)\cdot v}\,dv.
\end{split}
\end{equation}
In view of the cancellation condition in the first line of \eqref{mc2},
\begin{equation}\label{mc61}
P_{j_1,k_1,\ldots,j_r,k_r}(0)=\widetilde{P}_{j_1,k_1,\ldots,j_r,k_r}(0)=0.
\end{equation}
We make the changes of variables $x=2^J\mu$, $y=2^{J}\nu$ to rewrite
\begin{equation*}
\begin{split}
&P_{j_1,k_1,\ldots,j_r,k_r}(v)=\int_{\mathbb{R}^r\times\mathbb{R}^r} 2^{2rJ}G_{j_1,k_1,\ldots,j_r,k_r}(2^J\mu,2^J\nu) e^{-2\pi iD(\mu,\nu)\cdot v}\,d\mu d\nu,\\
&\widetilde{P}_{j_1,k_1,\ldots,j_r,k_r}(v)=\int_{\mathbb{R}^r\times\mathbb{R}^r} 2^{2rJ}G_{j_1,k_1,\ldots,j_r,k_r}(2^J\mu,2^J\nu) e^{-2\pi i\widetilde{D}(\mu,\nu)\cdot v}\,d\mu d\nu.
\end{split}
\end{equation*}
Using Lemma \ref{intest}, for $m=0,1,\ldots$
\begin{equation}\label{mc62}
\begin{split}
&|\nabla^m_v P_{j_1,k_1,\ldots,j_r,k_r}(v)|+|\nabla^m_v \widetilde{P}_{j_1,k_1,\ldots,j_r,k_r}(v)|\lesssim_m 2^{8r(J-\min(j_1,\ldots,j_r,k_1,\ldots,k_r))}(1+|v|)^{-(10d)^{10}}.
\end{split}
\end{equation}

\section{Proof of Theorem \ref{Main2}}\label{proofthm}

In this section we complete the proof of Theorem \ref{Main2}. The main ingredients are Lemma \ref{prop1} and the estimates and the identities proved in section \ref{oscl2}. We use the notation introduced  in section \ref{oscl2}. In view of the identity in the second line of \eqref{mc2}, it suffices to prove that for any integer $J\geq 1$
\begin{equation*}
\big\|\sum_{j=1}^JH_j\big\|_{L^2(\G_0)\to L^2(\G_0)}\lesssim 1.
\end{equation*}
By further dividing into finitely many sums, it suffices to prove the following:

\begin{proposition}\label{separated}
Assume $J_1,\ldots,J_K\in[1,\infty)$ satisfy the separation condition
\begin{equation}\label{tj1}
J_{m+1}\geq 2J_{m},\qquad m=1,\ldots,K-1.
\end{equation}
For $m=1,\ldots,K$ let
\begin{equation*}
S_m=\sum_{j\in[J_m(1-\kappa),J_m]\cap\Z}H_j.
\end{equation*}
Then
\begin{equation*}
\big\|S_1+\ldots+S_K\big\|_{L^2(\G_0)\to L^2(\G_0)}\lesssim 1.
\end{equation*}
\end{proposition}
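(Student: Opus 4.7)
The plan is to apply the almost-orthogonality lemma of section \ref{orthog} (in the form sketched at \eqref{introd6}) to the operators $S_1,\ldots,S_K$ with the dyadic parameter $p_0=r$ fixed in \eqref{mc9.5}. Writing $S_{n,0}=S_n$ and $S_{n,1}=0$ as in \eqref{introd6}, the abstract lemma reduces the proposition to verifying three families of operator bounds: a uniform bound $\sup_m \|S_m\|_{L^2\to L^2}\lesssim 1$, and the two ``collective'' gain estimates
\[
\Big\|S_m^\ast\sum_{n=m+1}^{K}(S_{n,i_n} S_{n,i_n}^\ast)^r\Big\|_{L^2\to L^2}+\Big\|S_m\sum_{n=m+1}^{K}(S_{n,i_n}^\ast S_{n,i_n})^r\Big\|_{L^2\to L^2}\lesssim 2^{-\delta_0 m},
\]
uniformly in the choice of $i_n\in\{0,1\}$ and for some $\delta_0>0$.

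For the uniform bound on $S_m$, I would apply the Cotlar--Stein lemma to the block $\{H_j:j\in[(1-\kappa)J_m,J_m]\cap\Z\}$ using the off-diagonal estimate \eqref{introd7}. Inside the block every index satisfies $k\geq (1-\kappa)J_m$, so after absorbing the finitely many scales with $J_m<C(d)$ into a trivial bounded error, the parasitic term $2^{-\delta' k}$ in \eqref{introd7} is small uniformly, while the main term $2^{-\delta'(j-k)}$ gives a Cotlar--Stein sum bounded independently of $m$.

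For the collective gain estimates I follow the two-step scheme announced in the introduction. First, in Lemma \ref{separated3}, I would establish the uniform majorization
\[
\Big\|\sum_{n=m}^{K}(S_n^\ast S_n)^r\Big\|_{L^2\to L^2}+\Big\|\sum_{n=m}^{K}(S_n S_n^\ast)^r\Big\|_{L^2\to L^2}\lesssim 1.
\]
Expanding $(S_n^\ast S_n)^r$ as a sum over $H_{j_1}^\ast H_{k_1}\cdots H_{j_r}^\ast H_{k_r}$ with indices in $[(1-\kappa)J_n,J_n]$, one invokes Proposition \ref{majarcs} to realize each kernel as a sum of the multiplier-type kernels \eqref{mc6.3}--\eqref{mc6.4} plus an $L^1$-negligible remainder. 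Since $J_{n+1}\geq 2J_n$, the spatial supports of the different pieces, at non-isotropic scale $2^{J_n+\eps J_n}$ around the origin, live at exponentially separated scales, and a non-isotropic Littlewood--Paley decomposition combined with the decay \eqref{mc62} and the vanishing \eqref{mc61} yields the claimed uniform bound.

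The second step, Lemma \ref{separated5}, is to show that left-composition with $S_{m-1}^\ast$ or $S_{m-1}$ gains an additional factor $2^{-\delta m}$; this is the main obstacle. A direct appeal to $\|S_{m-1}\|\lesssim 1$ and the uniform majorization loses the decay entirely, so one must exploit the precise multiplier form \eqref{mc6.3}--\eqref{mc6.4} of the kernels of $(S_n^\ast S_n)^r$ and $(S_n S_n^\ast)^r$ together with the exponential scale separation $J_n\geq 2J_{m-1}$: on pieces corresponding to rational frequencies $a/q$ with $q$ large relative to $2^{J_{m-1}}$ one uses the decay \eqref{mc7} of the arithmetic factor $S(a/q)$, whereas on the smooth multiplier pieces one uses the mean-zero cancellation of $K_j$ from \eqref{mc2}, tested against a slowly varying symbol, to gain a factor $2^{-\delta J_{m-1}}\lesssim 2^{-\delta_0 m}$ via the rapid growth $J_m\geq 2^{m-1}J_1$. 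Combining this gain with Lemma \ref{separated3} delivers the hypotheses of the almost-orthogonality lemma and thereby the proposition.
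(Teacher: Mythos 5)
Your overall architecture coincides with the paper's: the almost-orthogonality lemma of section \ref{orthog} (Remark \ref{prop2}) reduces the proposition to a uniform bound on each block $S_m$ (Lemma \ref{separated2}, via Cotlar--Stein), a uniform majorization of the long sums $(S_m^\ast S_m)^r+\ldots+(S_K^\ast S_K)^r$ (Lemma \ref{separated3}), and the gain under left composition with $S_m$ (Lemma \ref{separated5}). Your treatment of the block bound and the broad outline of the gain estimate are consistent with what the paper does.

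There is, however, a genuine gap at the heart of your argument for the uniform majorization. After Proposition \ref{majarcs} reduces matters to the kernels $e^{2\pi i h\cdot a/q}N_m(h)$, the decisive step is the \emph{diagonal} bound: the single convolution operator $F\mapsto\sum_h F(h^{-1}\cdot g)e^{2\pi ih\cdot a/q}N_m(h)$ must be bounded on $L^2(\G_0)$ uniformly in $m$ (with only polynomial loss in $q$). This cannot follow from ``a non-isotropic Littlewood--Paley decomposition combined with the decay \eqref{mc62} and the vanishing \eqref{mc61}'': the kernels $N_m$ have $L^1(\G_0)$ norm of size $2^{cJ_m}$ (see \eqref{tj32.5}), so size and cancellation alone do not suffice, and on the non-commutative group $\G_0$ the Euclidean Fourier transform does not diagonalize the convolution, so no Littlewood--Paley square function is available that is compatible with the operator. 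The paper's proof of this diagonal bound is the long chain \eqref{tj40}--\eqref{tj47}: one first strips the oscillation $e^{2\pi ih\cdot a/q}$ by decomposing $\G_0=\mathbb{H}_q\cdot R_q$, then transfers the resulting estimate \eqref{tj41} to the continuous group $\G_0^\#$, and finally invokes the $L^2$ boundedness of the continuous singular Radon transform \eqref{tj47} (essentially Theorem 3.4 of \cite{RiSt2}, or a separate Cotlar--Stein argument on $\G_0^\#$). A related omission occurs in the off-diagonal terms: the naive moment condition $\int N_m=O(2^{-J_m})$ from \eqref{mc61} is not the cancellation one needs in the presence of the factor $e^{2\pi ih\cdot a/q}$; the paper must replace $N_m$ by modified kernels $N_m'$ satisfying the twisted cancellation \eqref{tj30} along the cosets of $\mathbb{H}_q$ before Cotlar--Stein applies. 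Both of these devices reappear in Lemma \ref{separated5} (with $e^{2\pi ih\cdot a/q}$ replaced by the Gauss-sum-type weight $Z_m$), so the gap propagates to your second step as well. Until the diagonal $L^2$ bound and the twisted cancellation are supplied, the proposal does not close.
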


The rest of the section is concerned with the proof of Proposition \ref{separated}. We would like to apply Lemma \ref{prop1}, in the simplified form given in Remark \ref{prop2}. We will verify the conditions \eqref{na99} in several steps.

\begin{lemma}\label{separated2}
We have
\begin{equation*}
\sup_{J\geq 1}\sup_{A\subseteq[J/2,J]\cap\Z}\big\|\sum_{j\in A}H_j\big\|_{L^2(\G_0)\to L^2(\G_0)}\lesssim 1.
\end{equation*}
\end{lemma}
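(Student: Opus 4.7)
The plan is to apply the Cotlar--Stein lemma on the single dyadic block $[J/2, J]$. The key almost-orthogonality input is the bound
\begin{equation*}
\|H_j H_k^*\|_{L^2(\G_0) \to L^2(\G_0)} + \|H_j^* H_k\|_{L^2(\G_0) \to L^2(\G_0)} \lesssim 2^{-\delta(j-k)} + 2^{-\delta k}, \qquad k \leq j,
\end{equation*}
for some $\delta > 0$, which is precisely \eqref{introd7}. Given this, the Cotlar--Stein sum takes the form
\begin{equation*}
\sup_{j \in A} \sum_{k \in A} \bigl(\|H_j H_k^*\| + \|H_j^* H_k\|\bigr)^{1/2} \lesssim \sup_j \sum_{k \in [J/2, J]} \bigl(2^{-\delta|j-k|/2} + 2^{-\delta k/2}\bigr),
\end{equation*}
which is uniformly bounded in $J$: the first term sums geometrically in $|j-k|$, while the second is at most $J \cdot 2^{-\delta J/4}$, hence $O(1)$.

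To establish the almost-orthogonality input, one reduces, via the $C^*$-identity $\|T\|^{2r}=\|(TT^*)^r\|$ and the standard Schwarz iteration (the equivalence of \eqref{introd1} and \eqref{introd2} sketched in the introduction), to the stronger bound
\begin{equation*}
\|H_k (H_j^* H_j)^r\|_{L^2 \to L^2} + \|H_k^* (H_j H_j^*)^r\|_{L^2 \to L^2} \lesssim 2^{-\delta(j-k)r} + 2^{-\delta k r}
\end{equation*}
for a sufficiently large fixed $r$ as in Proposition \ref{majarcs}. To describe the kernel of $(H_j^* H_j)^r$, we apply Proposition \ref{majarcs} with all indices $j_1 = k_1 = \cdots = j_r = k_r = j$ (and with $J$ in that proposition replaced by $j$); the narrow-range hypothesis $[J(1-\kappa), J]$ is trivially satisfied. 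This yields the explicit major-arc expression \eqref{mc6.3} plus an $L^1$-error of size $2^{-j/4}$, already within the target.

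The remaining analysis is the composition of $H_k$ with the main kernel $K_{j,\ldots,j}$ of \eqref{mc6.3}. Three ingredients combine to produce the decay. First, the cancellation $\int K_k(t)\,dt = 0$ from \eqref{mc2}, combined with Taylor expansion of $K_{j,\ldots,j}(A_0(n)^{-1} g)$ around $n = 0$, supplies a factor of $2^{k-j}$ per derivative (the entries of $A_0(n)$ have size $|n|^{l_1} \lesssim 2^{k l_1}$, dominated by the $2^{j(l_1+l_2)}$ non-isotropic scale of the kernel). Second, the rapid decay $|S(a/q)| \lesssim q^{-(10d)^{10}}$ from Lemma \ref{Saq} controls the arithmetic sum over $a/q \in \mathcal{S}_{2^{3d^2 \eps j}}$, isolating the contribution of $q = 1$ from that of $q \geq 2$. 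Third, the vanishing $P_{j,\ldots,j}(0) = 0$ from \eqref{mc61}, together with the decay estimate \eqref{mc62}, provides enough regularity in the oscillatory $v$-integral so that the trivial $a/q = 0/1$ term contributes only $2^{-\delta k}$, after exploiting the oscillation of $e^{2\pi i (2^{-j} \circ h) \cdot v}$ across the support of $K_{j,\ldots,j}$.

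The principal obstacle is the persistent $2^{-\delta k}$ term, which cannot be absorbed into $2^{-\delta(j-k)}$ — this is the very obstruction flagged in the introduction as the reason Cotlar--Stein alone does not control the full sum over $j$. Here, however, because $k \geq J/2$ throughout the block, the $2^{-\delta k}$ contribution to the Cotlar--Stein bound is dominated by $J \cdot 2^{-\delta J/4}$, which is uniformly bounded. This is precisely why Lemma \ref{separated2} lies within the reach of classical Cotlar--Stein, whereas the eventual Proposition \ref{separated} requires the additional almost-orthogonality lemma proved in section \ref{orthog}.
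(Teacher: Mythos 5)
Your overall architecture --- Cotlar--Stein on the single block, the iterated Schwarz reduction to $\|H_k(H_j^\ast H_j)^r\|$, Proposition \ref{majarcs} with all indices equal to $j$, the decay of $S(a/q)$ from Lemma \ref{Saq}, and the Taylor-expansion/cancellation dichotomy --- is the paper's. Your variant of carrying the two-term bound $2^{-\delta(j-k)}+2^{-\delta k}$ and absorbing the second term at the summation stage (using $k\geq J/2$, so that $\sum_{k\in[J/2,J]}2^{-\delta k/2}\lesssim J\,2^{-\delta J/4}\lesssim 1$) is a legitimate alternative to what the paper actually does, namely use $k\geq j/2$ \emph{inside} the kernel estimate to obtain the clean bound $2^{-\delta(j-k)}$ with no extra term (see \eqref{tj2} and Remark \ref{rema}), which makes the Cotlar--Stein sum purely geometric.

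The gap is in your treatment of the major-arc main term for fractions with $2\leq q\leq 2^{3d^2\eps j}$. After subtracting $M_j(h)$ from $M_j(A_0(n)^{-1}\cdot h)$ (your first ingredient, which does give $2^{k-j}$), the remaining piece is $M_j(h)\sum_{n}K_k(n)e^{2\pi i(A_0(n)^{-1}\cdot h)\cdot a/q}$. The phase depends on $n$, so the cancellation $\int K_k=0$ does not apply directly; and the decay $|S(a/q)|\lesssim q^{-(10d)^{10}}$, summed over all fractions with $q\geq 2$, only yields a constant, not any decay in $j$ or $k$ --- ``isolating $q=1$ from $q\geq 2$'' is not an argument for the latter range. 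The missing step is the paper's \eqref{tj8}: write $n=qn'+m$, observe that $e^{2\pi i(A_0(qn'+m)^{-1}\cdot h)\cdot a/q}$ is independent of $n'$, and apply $\int K_k=0$ on each residue class to get $\lesssim q\,2^{-k}\lesssim 2^{-j/4}$; this is exactly where the block condition $q\leq 2^{3d^2\eps j}\ll 2^{k}$ (i.e.\ $k\gtrsim j$) is used, a point your write-up does not surface. Relatedly, for $q=1$ the decay comes simply from $|\sum_nK_k(n)|\lesssim 2^{-k}$ (a consequence of $\int K_k=0$ and $|K_k'|\lesssim 2^{-2k}$), not from $P_{j,\ldots,j}(0)=0$ and oscillation in the $v$-integral as you claim; the mean-zero property \eqref{mc61} of the composite kernel is needed elsewhere (in the proof of Lemma \ref{separated3}), not here.
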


\begin{proof}[Proof of Lemma \ref{separated2}] In view of the Cotlar--Stein lemma, it suffices to prove that, for some $\delta'>0$,
\begin{equation*}
\|H_k H_j^\ast\|_{L^2\to L^2}+\|H_k^\ast H_j\|_{L^2\to L^2}\lesssim 2^{-\delta'(j-k)}\qquad \text{ for any }k\leq j\in[J/2,J]\cap\Z.
\end{equation*}
Since $\|H_j\|_{L^2\to L^2}\lesssim 1$ for any $j$, it follows that
\begin{equation*}
\begin{split}
&\|H_k H_j^\ast\|_{L^2\to L^2}\lesssim \|H_k H_j^\ast H_j\|^{1/2}_{L^2\to L^2}\lesssim \|H_k (H_j^\ast H_j)^2\|^{1/4}_{L^2\to L^2}\lesssim \ldots\lesssim \|H_k (H_j^\ast H_j)^r\|^{1/(2r)}_{L^2\to L^2},\\
&\|H_k^\ast H_j\|_{L^2\to L^2}\lesssim \|H_k^\ast H_j H_j^\ast\|^{1/2}_{L^2\to L^2}\lesssim \|H_k^\ast (H_j H_j^\ast)^2\|^{1/4}_{L^2\to L^2}\lesssim \ldots\lesssim \|H_k^\ast (H_j H_j^\ast)^r\|^{1/(2r)}_{L^2\to L^2}.
\end{split}
\end{equation*}
Therefore it suffices to prove that there is $\delta=\delta(d)>0$ such that
\begin{equation}\label{tj2}
\|H_k (H_j^\ast H_j)^r\|_{L^2\to L^2}+\|H_k^\ast (H_jH_j^\ast)^r\|_{L^2\to L^2}\lesssim 2^{-\delta(j-k)}
\end{equation}
for any $k,j\in[C(d),\infty)\cap\Z$, $k\in[j/2,j]$.

We will prove only the bound on the first term in the left-hand side of \eqref{tj2}; the bound on the second term is very similar. We use Proposition \ref{majarcs} with $J=j_1=k_1=\ldots=j_r=k_r=j$. With the notation in Proposition \ref{majarcs}
\begin{equation*}
[H_k (H_j^\ast H_j)^r](F)(g)=\sum_{h\in\G_0}F(h^{-1}\cdot g)\sum_{n\in\Z}K_k(n)(K_{j,j,\ldots,j,j}+E_{j,j,\ldots,j,j})(A_0(n)^{-1}\cdot h),
\end{equation*}
for any $F\in L^2(\G_0)$ and $g\in \G_0$. In view of \eqref{mc5}, it suffices to prove that
\begin{equation*}
\Big\|\sum_{n\in\Z}K_k(n)K_{j,j,\ldots,j,j}(A_0(n)^{-1}\cdot h)\Big\|_{L^1_h(\G_0)}\lesssim 2^{-\delta(j-k)}.
\end{equation*}
We use now the formula \eqref{mc6.3}. For $x\in\mathbb{R}^{|Y_d|}$ let
\begin{equation}\label{tj4}
\begin{split}
M_j(x)&=\widetilde{\eta}_{\leq j+\eps j}(x)\prod_{(l_1,l_2)\in Y_d}2^{-j(l_1+l_2)}\\
&\int_{\mathbb{R}^{|Y_d|}}\prod_{(l_1,l_2)\in Y_d}\eta_0(2^{-2\eps j}\beta_{l_1l_2})P_{j,j,\ldots,j,j}(\beta)e^{2\pi i (2^{-j}\circ x)\cdot\beta}\,d\beta.
\end{split}
\end{equation}
Recalling the rapid decay of the coefficients $S(a/q)$ (see Lemma \ref{Saq}), it suffices to prove that for any $a/q\in\mathcal{S}_{2^{3d^2\eps j}}$
\begin{equation}\label{tj5}
\Big\|\sum_{n\in\Z}K_k(n)e^{2\pi i(A_0(n)^{-1}\cdot h)\cdot a/q}M_{j}(A_0(n)^{-1}\cdot h)\Big\|_{L^1_h(\G_0)}\lesssim 2^{-\delta(j-k)}q^{(4d)^4}.
\end{equation}

Using \eqref{mc62} and integration by parts
\begin{equation}\label{tj6}
|M_j(x)|+\sum_{(l_1,l_2)\in Y_d}2^{j(l_1+l_2)}|\partial_{x_{l_1,l_2}}M_j(x)|\lesssim(1+|2^{-j}\circ x|)^{-(4d)^4}\prod_{(l_1,l_2)\in Y_d}2^{-j(l_1+l_2)}.
\end{equation}
Therefore, if $|n|\lesssim 2^k$ and $h\in \G_0$
\begin{equation*}
|M_{j}(A_0(n)^{-1}\cdot h)-M_j(h)|\lesssim 2^{k-j}(1+|2^{-j}\circ h|)^{-(4d)^4}\prod_{(l_1,l_2)\in Y_d}2^{-j(l_1+l_2)}.
\end{equation*}
Thus
\begin{equation}\label{tj7}
\Big\|\sum_{n\in\Z}K_k(n)e^{2\pi i(A_0(n)^{-1}\cdot h)\cdot a/q}[M_{j}(A_0(n)^{-1}\cdot h)-M_j(h)]\Big\|_{L^1_h(\G_0)}\lesssim 2^{k-j}.
\end{equation}

On the other hand, using \eqref{mc2} and the assumption $k\geq j/2$, for any $h\in \G_0$ and $a/q\in\mathcal{S}_{2^{3d^2\eps j}}$
\begin{equation}\label{tj8}
\begin{split}
&\Big|\sum_{n\in\Z}K_k(n)e^{2\pi i(A_0(n)^{-1}\cdot h)\cdot a/q}\Big|\leq\sum_{m\in Z_q}\Big|\sum_{n\in\Z}K_k(qn+m)e^{2\pi i(A_0(qn+m)^{-1}\cdot h)\cdot a/q}\Big|\\
&\leq \sum_{m\in Z_q}\Big|\sum_{n\in\Z}K_k(qn+m)\Big|\lesssim 2^{-j/4}.
\end{split}
\end{equation}
Thus, using also \eqref{tj6},
\begin{equation}\label{tj9}
\Big\|\sum_{n\in\Z}K_k(n)e^{2\pi i(A_0(n)^{-1}\cdot h)\cdot a/q}M_j(h)\Big\|_{L^1_h(\G_0)}\lesssim 2^{-j/4},
\end{equation}
and the bound \eqref{tj5} follows from \eqref{tj7} and \eqref{tj9}. This completes the proof.
\end{proof}

\begin{remark}\label{rema}
We observe that it is important to assume that $j/k\lesssim 1$ in the proof of the bound \eqref{tj2}. Otherwise one could only prove a weaker bound, of the form
\begin{equation*}
\|H_k H_j^\ast\|_{L^2\to L^2}+\|H_k^\ast H_j\|_{L^2\to L^2}\lesssim 2^{-\delta'(j-k)}+2^{-\delta' k}\qquad \text{ for any }k\leq j\in\{1,2,\ldots\}.
\end{equation*}
Such a bound does not suffice to apply the Cotlar--Stein lemma to prove the theorem directly. It is precisely to compensate for this failure that we need the additional orthogonality proposition in section \ref{orthog}.
\end{remark}

We consider now long sums of operators $(S_m^\ast S_m)^r$ and $(S_m S_m^\ast)^r$.

\begin{lemma}\label{separated3}
Assume $J_1,\ldots,J_K\in[C(d),\infty)$ satisfy the separation condition
\begin{equation}\label{tj20}
J_{m+1}\geq 2J_{m},\qquad m=1,\ldots,K-1.
\end{equation}
For $m=1,\ldots,K$ let
\begin{equation*}
S_m=\sum_{j\in[J_m(1-\kappa),J_m]\cap\Z}H_j.
\end{equation*}
Then
\begin{equation}\label{tj21}
\big\|(S_1^\ast S_1)^r+\ldots+(S_K^\ast S_K)^r\big\|_{L^2\to L^2}+\big\|(S_1 S_1^\ast)^r+\ldots+(S_K S_K^\ast)^r\big\|_{L^2\to L^2}\lesssim 1.
\end{equation}
\end{lemma}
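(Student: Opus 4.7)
The plan is to apply Proposition~\ref{majarcs} to rewrite each $(S_m^\ast S_m)^r$ as convolution with an explicit main kernel $\mathcal{K}_m$ plus an $L^1$-summable error, to factor $\mathcal{K}_m$ as a modulated sum times a scale-$2^{J_m}$ envelope $\Phi_m$ that does not depend on the tuple and has (essentially) vanishing discrete mean, and then to bound the sum over $m$ by the Cotlar--Stein lemma applied separately at each frequency $a/q$, absorbing polynomial-in-$q$ losses via the rapid decay $|S(a/q)|\lesssim q^{-(10d)^{10}}$ of Lemma~\ref{Saq}.

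Concretely, expanding $(S_m^\ast S_m)^r$ over the $\lesssim J_m^{2r}$ tuples $(j_1,k_1,\ldots,j_r,k_r)\in([(1-\kappa)J_m,J_m]\cap\Z)^{2r}$ and applying Proposition~\ref{majarcs} gives convolution with $\mathcal{K}_m+\mathcal{E}_m$, where by \eqref{mc5}
\[
\|\mathcal{E}_m\|_{L^1(\G_0)}\lesssim J_m^{2r}\,2^{-J_m/4}\lesssim 2^{-J_m/5}\qquad(J_m\ge C(d,r)),
\]
so the geometric growth \eqref{tj20} yields $\sum_m\|\mathcal{E}_m\|_{L^1}\lesssim 1$ and the error contribution is $L^2$-bounded by Young. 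Summing \eqref{mc6.3} over tuples and noting that the outer factor $S(a/q)\,e^{2\pi ih\cdot a/q}$ is independent of the tuple, one factorizes
\[
\mathcal{K}_m(h)=\Big(\sum_{a/q\in\mathcal{S}_{2^{3d^2\eps J_m}}}S(a/q)\,e^{2\pi ih\cdot a/q}\Big)\Phi_m(h),
\]
where $\Phi_m$ is supported in the non-isotropic box $\{|h_{l_1l_2}|\le 2\cdot 2^{J_m(1+\eps)(l_1+l_2)}\}$, satisfies size and derivative estimates (from \eqref{mc62} plus integration by parts) giving in particular $\|\Phi_m\|_{L^1(\G_0)}\lesssim 2^{J_m/r^2}$, and has essentially vanishing mean $\sum_{h\in\G_0}\Phi_m(h)=O(2^{-cJ_m})$ by Poisson summation and $\mathcal{P}_m(0)=0$ (from \eqref{mc61}).

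For each fixed $a/q\in\mathcal{S}_\infty$ set $T_m^{a/q}f:=(e^{2\pi ih\cdot a/q}\Phi_m(h))\ast f$, with $T_m^{a/q}=0$ unless $q\le 2^{3d^2\eps J_m}$; Young gives $\|T_m^{a/q}\|_{L^2\to L^2}\lesssim 2^{J_m/r^2}$. For $m<n$ (so $J_n\ge 2J_m$), the vanishing mean of $\Phi_m$ combined with Taylor expansion of $\Phi_n$ at the much finer scale of $\Phi_m$, adapted to the group convolution and accounting for the modulation, yields the Cotlar--Stein off-diagonal estimate
\[
\|T_m^{a/q}(T_n^{a/q})^\ast\|_{L^2\to L^2}+\|(T_m^{a/q})^\ast T_n^{a/q}\|_{L^2\to L^2}\lesssim q^A\,2^{-\delta(J_n-J_m)}\qquad(A,\delta>0\text{ depending on }d),
\]
whence $\big\|\sum_m T_m^{a/q}\big\|_{L^2\to L^2}\lesssim q^{A'}$ for some $A'=A'(d)$. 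Combining with Lemma~\ref{Saq} and the bound $|\{a/q\in\mathcal{S}_\infty:\text{denom}=q\}|\lesssim q^{|Y_d|}$ gives
\[
\Big\|\sum_m\mathcal{K}_m\ast(\cdot)\Big\|_{L^2\to L^2}\lesssim\sum_{q\ge 1}q^{|Y_d|+A'-(10d)^{10}}\lesssim 1,
\]
which together with the error estimate proves the first bound in \eqref{tj21}. The second bound, for $\sum_m(S_mS_m^\ast)^r$, is obtained by the symmetric argument using \eqref{mc6.4}, $\widetilde{S}$, and $\widetilde{P}$ in place of $S$ and $P$. The main obstacle is the off-diagonal Cotlar--Stein estimate: in the non-commutative setting of $\G_0$ the modulation $e^{2\pi ih\cdot a/q}$ interacts non-trivially with the group convolution, so one must carry out a careful Taylor/commutator analysis to turn the vanishing mean of $\Phi_m$ into genuine cancellation in the product kernel $(e^{2\pi i\cdot a/q}\Phi_m)\ast(e^{2\pi i\cdot a/q}\Phi_n)^\vee$, while keeping the polynomial-in-$q$ loss strictly below $(10d)^{10}$ so that Lemma~\ref{Saq} absorbs it.
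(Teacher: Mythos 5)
Your overall architecture (apply Proposition \ref{majarcs}, sum the errors using \eqref{tj20}, factor the main kernel as a modulation times an envelope, treat each fraction $a/q$ separately, absorb polynomial losses in $q$ via Lemma \ref{Saq}, and run Cotlar--Stein over $m$) is the same as the paper's. But there are two genuine gaps, and the first is where most of the actual work in the paper's proof lives.

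First, your diagonal estimate is missing. Young's inequality gives $\|T_m^{a/q}\|_{L^2\to L^2}\le\|\Phi_m\|_{L^1}$, which by \eqref{tj22.4} grows like $2^{4\eps J_m}$; but Cotlar--Stein requires a bound on $\|T_m^{a/q}(T_m^{a/q})^\ast\|=\|T_m^{a/q}\|^2$ that is \emph{uniform} in $m$ (only a polynomial loss in $q$ is tolerable). The paper proves exactly this in \eqref{tj40}--\eqref{tj41}: after splitting $\G_0=\mathbb{H}_q\cdot R_q$ to strip off the modulation (costing the factor $q^{|Y_d|}$), one must show that convolution with $N_m$ alone is bounded on $L^2(\G_0)$ uniformly in $m$. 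This is done by transferring the discrete convolution to the continuous group $\G_0^\#$ (formula \eqref{tj43}), recognizing the resulting kernel as that of $\sum (H_{j_1}^\#)^\ast H_{k_1}^\#\cdots(H_{j_r}^\#)^\ast H_{k_r}^\#$, and reducing to the $L^2(\G_0^\#)$ boundedness of the continuous singular Radon transform \eqref{tj47}, itself proved by a further Cotlar--Stein argument. None of this appears in your proposal, and without it the sum over $m$ cannot be closed.

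Second, for the off-diagonal terms you correctly identify the obstruction --- the modulation $e^{2\pi ih\cdot a/q}$ does not pass through the non-commutative convolution --- but you leave it unresolved, proposing only a ``Taylor/commutator analysis'' based on the near-vanishing of $\sum_{h}\Phi_m(h)$. That cancellation is too weak: in the composed kernel the relevant phase is $h\mapsto e^{2\pi ih\cdot a/q}e^{-2\pi i(v\cdot h)\cdot a/q}$, which is \emph{not} constant in $h$ (the bilinear term in the group law makes it depend on $h$ modulo $q$), so the plain mean of $\Phi_m$ does not produce the difference structure $N_{n}(v\cdot h)-N_{n}(v)$ needed for the gain $2^{-\delta(J_n-J_m)}$. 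The paper's fix is to replace $N_m$ by a corrected kernel $N'_m$ (formula \eqref{tj30.4}) whose sum vanishes on every coset of the subgroup $\mathbb{H}_q$ of \eqref{tj30.1} --- on which the phase \emph{is} constant --- and to verify via \eqref{mc61}--\eqref{mc62} that the correction has $L^1$ norm $O(2^{-J_m/4})$. With that exact, coset-adapted cancellation in hand, the off-diagonal bound \eqref{tj33} follows from the derivative estimates \eqref{tj32.5}. You need this construction, not merely the vanishing of the global mean.
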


\begin{proof}[Proof of Lemma \ref{separated3}] We prove only the bound on the first term in the left-hand side of \eqref{tj21}. In view of Proposition \ref{majarcs}, it suffices to prove that
\begin{equation*}
\Big\|\sum_{m=1}^KF\ast\Big[\sum_{j_1,k_1,\ldots,j_r,k_r\in[J_m(1-\kappa),J_m]\cap\Z}K_{j_1,k_1,\ldots,j_r,k_r}\Big]\Big\|_{L^2(\G_0)}\lesssim \|F\|_{L^2(\G_0)}
\end{equation*}
for any $F\in L^2(\G_0)$. For $x\in\mathbb{R}^{|Y_d|}$ and $m=1,\ldots,K$ we define
\begin{equation}\label{tj22}
\begin{split}
&N_m(x)=\widetilde{\eta}_{\leq J_m+\eps J_m}(x)\prod_{(l_1,l_2)\in Y_d}2^{-J_m(l_1+l_2)}\\
&\int_{\mathbb{R}^{|Y_d|}}\prod_{(l_1,l_2)\in Y_d}\eta_0(2^{-2\eps J_m}\beta_{l_1l_2})\sum_{j_1,k_1,\ldots,j_r,k_r\in[J_m(1-\kappa),J_m]\cap\Z}P_{j_1,k_1,\ldots,j_r,k_r}(\beta)e^{2\pi i (2^{-J_m}\circ x)\cdot\beta}\,d\beta.
\end{split}
\end{equation}
We use the formula \eqref{mc6.3} and the rapid decay of the coefficients $S(a/q)$ in Lemma \ref{Saq}. After rearranging the sum, it suffices to prove that for any $a/q\in \mathcal{S}_\infty$
\begin{equation}\label{tj23}
\Big\|\sum_{2^{J_m}\geq q^{(8d)^8}}\sum_{h\in \G_0}F(h^{-1}\cdot g)e^{2\pi i h\cdot a/q}N_m(h)\Big\|_{L^2_g(\G_0)}\lesssim q^{(4d)^4}\|F\|_{L^2(\G_0)}
\end{equation}
for any $F\in L^2(\G_0)$.

Using \eqref{mc62} and integration by parts
\begin{equation}\label{tj22.4}
|N_m(x)|+\sum_{(l_1,l_2)\in Y_d}2^{J_m(l_1+l_2)}|\partial_{x_{l_1l_2}}N_m(x)|\lesssim_C 2^{4\eps J_m}(1+|2^{-J_m}\circ x|)^{-C}\prod_{(l_1,l_2)\in Y_d}2^{-J_m(l_1+l_2)}.
\end{equation}
Using both \eqref{mc61} and \eqref{mc62}, it follows that
\begin{equation}\label{tj22.6}
\Big|\int_{\mathbb{R}^{|Y_d|}}N_m(x)\,dx\Big|\lesssim 2^{-J_m}.
\end{equation}

We would like to prove \eqref{tj23} using the Cotlar--Stein lemma. For this we need to modify the kernels $N_m$ to achieve a cancellation. More precisely, given a fixed fraction $a/q\in \mathcal{S}_\infty$ and $2^{J_m}\geq q^{(8d)^8}$ we would like to define kernels $N'_m:\G_0\to\mathbb{C}$ with the properties
\begin{equation}\label{tj30}
\begin{split}
&\sum_{h\in \G_0}N'_m(h)e^{2\pi i h\cdot a/q}e^{-2\pi i(v\cdot h)\cdot a/q}=\sum_{h\in \G_0}N'_m(h)e^{2\pi i h\cdot a/q}e^{-2\pi i(h\cdot v)\cdot a/q}=0,\qquad v\in\G_0,\\
&\|N_m-N'_m\|_{L^1(\G_0)}\lesssim 2^{-J_m/4},\\
&N'_m(h)=0\qquad\text{ if }\qquad h\notin \mathcal{D}_{2^{J_m(1+2\eps)}}.
\end{split}
\end{equation}
To prove this, we introduce a decomposition of elements in the group $\G_0$, adapted to the denominator $q$. Let
\begin{equation}\label{tj30.1}
\begin{split}
&\mathbb{H}_q=\{h\in \G_0:\,h=(qm_{l_1l_2})_{(l_1,l_2)\in Y_d},\,m_{l_1l_2}\in\Z\},\\
&R_q=\{b\in\G_0:\,b_{l_1l_2}\in[0,q-1]\cap\Z\},
\end{split}
\end{equation}
and notice that
\begin{equation}\label{tj30.2}
\text{ the map }(h,b)\to h\cdot b\text{ defines a bijection from }\mathbb{H}_q\times R_q\text{ to }\G_0.
\end{equation}
The cancellation condition in the first line or \eqref{tj30} holds provided that
\begin{equation}\label{tj30.3}
\sum_{h\in\mathbb{H}_q}N'_m(h\cdot b)=0\text{ for any }b\in R_q.
\end{equation}
Therefore we set, for any $h\in\G_0$
\begin{equation}\label{tj30.4}
\begin{split}
&N'_m(h)=N_m(h)-\widetilde{\eta}_{\leq J_m}(h)\prod_{(l_1,l_2)\in Y_d}2^{-J_m(l_1+l_2)}\sum_{b\in R_q}\gamma_b\mathbf{1}_{\mathbb{H}_q\cdot b}(h),\\
&\gamma_b=\big[\sum_{g\in\mathbb{H}_q}N_m(g\cdot b)\big]\big[\sum_{g\in\mathbb{H}_q}\widetilde{\eta}_{\leq J_m}(g\cdot b)\prod_{(l_1,l_2)\in Y_d}2^{-J_m(l_1+l_2)}\big]^{-1}\qquad\text{ for any }b\in\mathbb{R}_q.
\end{split}
\end{equation}
The support assertion in \eqref{tj30}  follows from the definition. The cancellation assertion in \eqref{tj30} follows from \eqref{tj30.3}. Finally, to prove that
$\|N_m-N'_m\|_{L^1(\G_0)}\lesssim 2^{-J_m/4}$ it suffices to prove that
\begin{equation*}
|\gamma_b|\lesssim 2^{-J_m/4}\qquad\text{ for any }b\in R_q.
\end{equation*}
Recalling that $2^{J_m}\geq q^{(8d)^8}$ and using the definition of $\gamma_b$, it remains to prove that
\begin{equation}\label{tj30.5}
\big|\sum_{g\in\mathbb{H}_q}N_m(g\cdot b)\big|\lesssim 2^{-J_m/3}\qquad\text{ for any }b\in R_q.
\end{equation}
Using \eqref{tj22},
\begin{equation*}
\sup_{b\in R_q}|N_m(g\cdot b)-N_m(g)|\lesssim 2^{-J_m/2}(1+|2^{-J_m}\circ x|)^{-(4d)^4}\prod_{(l_1,l_2)\in Y_d}2^{-J_m(l_1+l_2)}.
\end{equation*}
Moreover, using \eqref{tj22.4}, \eqref{tj22.6},
\begin{equation*}
\big|\sum_{g\in\mathbb{H}_q}N_m(g)\big|\lesssim 2^{-J_m/2}.
\end{equation*}
The bound \eqref{tj30.5} follows from the last two bounds, which completes the proof of \eqref{tj30}.

We turn now to the proof of \eqref{tj23}. Let
\begin{equation*}
T_mF(g)=\sum_{h\in\G_0}F(h^{-1}\cdot g)e^{2\pi ih\cdot a/q}N'_m(h).
\end{equation*}
For \eqref{tj23} it suffices to prove that
\begin{equation*}
\Big\|\sum_{2^{J_m}\geq q^{(8d)^8}}T_m\Big\|_{L^2\to L^2}\lesssim q^{(4d)^4}.
\end{equation*}
In view of the Cotlar--Stein lemma, it suffices to prove that, for some $\delta=\delta(d)>0$
\begin{equation}\label{tj31}
\|T_mT^\ast_{m'}\|_{L^2\to L^2}+\|T^\ast_mT_{m'}\|_{L^2\to L^2}\lesssim 2^{-\delta(m'-m)}q^{2(4d)^4},\qquad 2^{J_{m'}}\geq 2^{J_m}\geq q^{(8d)^8}.
\end{equation}

We first prove \eqref{tj31} when $m'\geq m+1$. Using the cancellation conditions in \eqref{tj30},
\begin{equation*}
\begin{split}
T_mT_{m'}^\ast F(g)&=\sum_{v\in\G_0}F(v\cdot g)\big[\sum_{h\in\G_0}N'_m(h)e^{2\pi i h\cdot a/q}\overline{N'_{m'}}(v\cdot h)e^{-2\pi i(v\cdot h)\cdot a/q}\big]\\
&=\sum_{v\in\G_0}F(v\cdot g)\big[\sum_{h\in\G_0}N'_m(h)e^{2\pi i h\cdot a/q}[\overline{N'_{m'}}(v\cdot h)-\overline{N'_{m'}}(v)]e^{-2\pi i(v\cdot h)\cdot a/q}\big],
\end{split}
\end{equation*}
and
\begin{equation*}
\begin{split}
T_m^\ast T_{m'} F(g)&=\sum_{v\in\G_0}F(v^{-1}\cdot g)\big[\sum_{h\in\G_0}\overline{N'_m}(h)e^{-2\pi i h\cdot a/q}N'_{m'}(h\cdot v)e^{2\pi i(h\cdot v)\cdot a/q}\big]\\
&=\sum_{v\in\G_0}F(v^{-1}\cdot g)\big[\sum_{h\in\G_0}\overline{N'_m}(h)e^{-2\pi i h\cdot a/q}[N'_{m'}(h\cdot v)-N'_{m'}(v)]e^{2\pi i(h\cdot v)\cdot a/q}\big].
\end{split}
\end{equation*}
Thus
\begin{equation}\label{tj32}
\begin{split}
&\|T_mT^\ast_{m'}\|_{L^2\to L^2}+\|T^\ast_mT_{m'}\|_{L^2\to L^2}\\
&\lesssim \Big\|\sum_{h\in\G_0}|N'_m(h)|\big[|N'_{m'}(h\cdot v)-N'_{m'}(v)|+|N'_{m'}(v\cdot h)-N'_{m'}(v)|\big]\Big\|_{L^1_v}\\
&\lesssim  \|N'_m\|_{L^1}\|N_{m'}-N'_{m'}\|_{L^1}\\
&+\|N'_m\|_{L^1}\sup_{h\in \mathcal{D}^\#_{2^{J_m(1+2\eps)}}}\big(\|N_{m'}(h\cdot v)-N_{m'}(v)\|_{L^1_v}+\|N_{m'}(v\cdot h)-N_{m'}(v)\|_{L^1_v}\big).
\end{split}
\end{equation}
Using the bounds \eqref{tj22.4} and \eqref{tj30} and the separation assumption $J_{m'}\geq 2J_m$,
\begin{equation}\label{tj32.5}
\begin{split}
&\|N_{m'}-N'_{m'}\|_{L^1}\lesssim 2^{-J_{m'}/4},\qquad \|N_{m}\|_{L^1}+\|N'_{m}\|_{L^1}\lesssim 2^{J_m/20},\\
&\sup_{h\in \mathcal{D}^\#_{2^{J_m(1+2\eps)}}}|N_{m'}(h\cdot v)-N_{m'}(v)|\lesssim \mathbf{1}_{\mathcal{D}_{2^{J_{m'}(1+8\eps)}}}(v)2^{J_m-J_{m'}}2^{8\eps J_{m'}}\prod_{(l_1,l_2)\in Y_d}2^{-J_{m'}(l_1+l_2)},\\
&\sup_{h\in \mathcal{D}^\#_{2^{J_m(1+2\eps)}}}|N_{m'}(v\cdot h)-N_{m'}(v)|\lesssim \mathbf{1}_{\mathcal{D}_{2^{J_{m'}(1+8\eps)}}}(v)2^{J_m-J_{m'}}2^{8\eps J_{m'}}\prod_{(l_1,l_2)\in Y_d}2^{-J_{m'}(l_1+l_2)}.
\end{split}
\end{equation}
Using \eqref{tj32} it follows that
\begin{equation}\label{tj33}
\|T_mT^\ast_{m'}\|_{L^2\to L^2}+\|T^\ast_mT_{m'}\|_{L^2\to L^2}\lesssim 2^{-J_{m'}/10},
\end{equation}
which clearly suffices to prove \eqref{tj31} in this case.

Finally, we prove \eqref{tj31} when $m'=m$, which is equivalent to
\begin{equation}\label{tj40}
\Big\|\sum_{h\in\G_0}F(h)e^{2\pi i(g\cdot h)\cdot a/q}N_m(g\cdot h)\Big\|_{L^2_g}\lesssim q^{(4d)^4}\|F\|_{L^2}.
\end{equation}
Using the decomposition \eqref{tj30.1}-\eqref{tj30.2}, it suffices to prove that
\begin{equation*}
\sum_{b\in R_q}\Big\|\sum_{h\in\mathbb{H}_q}F(h\cdot b)e^{2\pi i(g\cdot h\cdot b)\cdot a/q}N_m(g\cdot h\cdot b)\Big\|_{L^2_g}\lesssim q^{(4d)^4}\|F\|_{L^2}.
\end{equation*}
Since $e^{2\pi i(g\cdot h\cdot b)\cdot a/q}$ does not depend on $h\in \mathbb{H}_q$, this is equivalent to proving that
\begin{equation}\label{tj40.5}
\sum_{b\in R_q}\Big\|\sum_{h\in\mathbb{H}_q}F_b(h)N_m(g^{-1}\cdot h\cdot b)\Big\|_{L^2_g}\lesssim q^{(4d)^4}\big[\sum_{b\in R_q}\|F_b\|^2_{L^2(\mathbb{H}_q)}\big]^{1/2}.
\end{equation}
We notice that $R_q$ has $q^{|Y_d|}$ elements. Therefore, it suffices to prove that for any $F,G\in L^2(\G_0)$
\begin{equation}\label{tj41}
\Big|\sum_{h,g\in\G_0}F(h)N_m(h^{-1}\cdot g)G(g)\Big|\lesssim \|F\|_{L^2}\|G\|_{L^2}.
\end{equation}

We derive \eqref{tj41} as a consequence of $L^2$ boundedness of a singular Radon transform on the nilpotent Lie group $\G_0^\#$. Let
\begin{equation*}
\mathcal{C}=[0,1)^{|Y_d|}\subseteq\G_0^\#
\end{equation*}
and notice that
\begin{equation}\label{tj42}
\text{ the map }(g,\mu)\to g\cdot\mu\text{ defines a measure-preserving bijection from }\G_0\times\mathcal{C} \text{ to }\G_0^\#.
\end{equation}
For any function $f\in L^2(\G_0)$ let
\begin{equation*}
f^\#(g\cdot\mu)=f(g)\text{ for any }(g,\mu)\in\G_0\times\mathcal{C},\quad f^\#\in L^2(\G_0^\#),\quad \|f^\#\|_{L^2(\G_0^\#)}=\|f\|_{L^2(\G_0)}.
\end{equation*}
Then we write, for any $F,G\in L^2(\G_0)$
\begin{equation}\label{tj43}
\begin{split}
&\sum_{h,g\in\G_0}F(h)N_m(h^{-1}\cdot g)G(g)=\int_{\mathcal{C}\times\mathcal{C}}\sum_{h,g\in\G_0}F^\#(h\cdot\mu)N_m(h^{-1}\cdot g)G^\#(g\cdot\nu)\,d\mu d\nu\\
&=\int_{\G_0^\#\times\G_0^\#}F^\#(y)N_m(y^{-1}\cdot x)G^\#(x)\,dx dy\\
&+\int_{\mathcal{C}\times\mathcal{C}}\sum_{h,g\in\G_0}F^\#(h\cdot\mu)[N_m(h^{-1}\cdot g)-N_m(\mu^{-1}\cdot h^{-1}\cdot g\cdot\nu)]G^\#(g\cdot\nu)\,d\mu d\nu.
\end{split}
\end{equation}
Using \eqref{tj22.4}, we have
\begin{equation*}
\big\|\sup_{\mu,\nu\in\mathcal{C}}\big|N_m(x)-N_m(\mu^{-1}\cdot x\cdot\nu)\big|\,\big\|_{L^1_x(\G_0)}\lesssim 2^{-J_m/2}.
\end{equation*}
Thus
\begin{equation*}
\Big|\int_{\mathcal{C}\times\mathcal{C}}\sum_{h,g\in\G_0}F^\#(h\cdot\mu)[N_m(h^{-1}\cdot g)-N_m(\mu^{-1}\cdot h^{-1}\cdot g\cdot\nu)]G^\#(g\cdot\nu)\,d\mu d\nu\Big|\lesssim \|F\|_{L^2}\|G\|_{L^2}.
\end{equation*}
Using \eqref{tj43}, for \eqref{tj41} it suffices to prove that
\begin{equation}\label{tj44}
\Big|\int_{\G_0^\#\times\G_0^\#}F(y)N_m(y^{-1}\cdot x)G(x)\,dx dy\Big|\lesssim \|F\|_{L^2(\G_0^\#)}\|G\|_{L^2(\G_0^\#)}
\end{equation}
for any $F,G\in L^2(\G_0^\#)$.

We examine the formula \eqref{tj22} and define
\begin{equation*}
N''_m(x)=\prod_{(l_1,l_2)\in Y_d}2^{-J_m(l_1+l_2)}\int_{\mathbb{R}^{|Y_d|}}\sum_{j_1,k_1,\ldots,j_r,k_r\in[J_m(1-\kappa),J_m]\cap\Z}P_{j_1,k_1,\ldots,j_r,k_r}(\beta)e^{2\pi i (2^{-J_m}\circ x)\cdot\beta}\,d\beta.
\end{equation*}
Using \eqref{mc62}
\begin{equation*}
\|N_m-N''_m\|_{L^1(\G_0^\#)}\lesssim 1.
\end{equation*}
Therefore, for \eqref{tj44} it suffices to prove that for any $F\in C^\infty_0(\G_0^\#)$
\begin{equation}\label{tj45}
\Big\|\int_{\G_0^\#}F(y^{-1}\cdot x)N''_m(y)\,dy\Big\|_{L^2_x(\G_0^\#)}\lesssim \|F\|_{L^2(\G_0^\#)}.
\end{equation}
Recalling the definition \eqref{mc60} we notice that, for any $F\in C^\infty_0(\G_0^\#)$,
\begin{equation*}
\int_{\G_0^\#}F(y^{-1}\cdot x)N''_m(y)\,dy=\sum_{j_1,k_1,\ldots,j_r,k_r\in[J_m(1-\kappa),J_m]\cap\Z}[(H_{j_1}^\#)^\ast H_{k_1}^\#\ldots (H_{j_r}^\#)^\ast H_{k_r}^\#](F)(x)
\end{equation*}
where, by definition,
\begin{equation}\label{tj46}
H_j^\#f(x)=\int_{\mathbb{R}}K_j(t)f(A_0(t)^{-1}\cdot x)\,dt.
\end{equation}
Therefore, for \eqref{tj45} it suffices to prove that
\begin{equation}\label{tj47}
\big\|\sum_{j\in[J_m(1-\kappa),J_m]\cap\Z}H_{j}^\#\big\|_{L^2(\G_0^\#)\to L^2(\G_0^\#)}\lesssim 1.
\end{equation}

The bound \eqref{tj47} is essentially known, as a consequence of Theorem 3.4 in \cite{RiSt2}. We can also reprove it easily, using the bounds we have proved so far. As in the proof of Lemma \ref{separated2}, using the Cotlar--Stein lemma it suffices to prove that
\begin{equation}\label{tj48}
\|H_k^\#((H_j^\#)^\ast H_j^\#)^r\|_{L^2(\G_0^\#)\to L^2(\G_0^\#)}+\|(H_k^\#)^\ast(H_j^\#(H_j^\#)^\ast)^r\|_{L^2(\G_0^\#)\to L^2(\G_0^\#)}\lesssim 2^{-\delta(j-k)}
\end{equation}
for some $\delta=\delta(d)>0$ and any $k\leq j\in[J_m(1-\kappa),J_m]\cap\Z$. The operator $H_k^\#((H_j^\#)^\ast H_j^\#)^r$ is a convolution operator on the group $\G_0^\#$ defined by the kernel
\begin{equation*}
x\to\int_{\mathbb{R}}K_k(t)M''_j(A_0(t)^{-1}\cdot x)\,dt,
\end{equation*}
where
\begin{equation*}
M''_j(x)=\prod_{(l_1,l_2)\in Y_d}2^{-j(l_1+l_2)}\int_{\mathbb{R}^{|Y_d|}}P_{j,j,\ldots,j,j}(\beta)e^{2\pi i (2^{-j}\circ x)\cdot\beta}\,d\beta.
\end{equation*}
Using \eqref{mc62} and integration by parts, the kernels $M''_j$ satisfy the same bounds as the kernels $M_j$ defined in \eqref{tj4}, namely
\begin{equation*}
|M''_j(x)|+\sum_{(l_1,l_2)\in Y_d}2^{j(l_1+l_2)}|\partial_{x_{l_1,l_2}}M''_j(x)|\lesssim(1+|2^{-j}\circ x|)^{-(4d)^4}\prod_{(l_1,l_2)\in Y_d}2^{-j(l_1+l_2)}.
\end{equation*}
Using the cancellation assumption $\int_{\mathbb{R}}K_k(t)\,dt=0$ in \eqref{mc2}, it follows that the $L^1(\G_0^\#)$ of the kernel of the operator $H_k^\#((H_j^\#)^\ast H_j^\#)^r$ is $\lesssim 2^{k-j}$, which suffices to prove the desired bound on the first term in the left-hand side of \eqref{tj48}. The bound on the second term is similar. This completes the proof of the lemma.
\end{proof}

Finally we verify the main inequalities in \eqref{na99}. Proposition \ref{separated} follows from Lemma \ref{prop1}, Lemma \ref{separated2} , and Lemma \ref{separated5} below. This completes the proof of Theorem \ref{Main2}.

\begin{lemma}\label{separated5}
Assume $J_1,\ldots,J_K\in[C(d),\infty)$ satisfy the separation condition
\begin{equation}\label{tj50}
J_{m+1}\geq 2J_{m},\qquad m=1,\ldots,K-1.
\end{equation}
For $m=1,\ldots,K$ let, as before,
\begin{equation*}
S_m=\sum_{j\in[J_m(1-\kappa),J_m]\cap\Z}H_j.
\end{equation*}
Then, for some $\delta=\delta(d)>0$ and any $m=1,\ldots,K-1$
\begin{equation}\label{tj51}
\begin{split}
\big\|S_m&[(S_{m+1}^\ast S_{m+1})^r+\ldots+(S_K^\ast S_K)^r]\big\|_{L^2\to L^2}\\
&+\big\|S_m^\ast[(S_{m+1} S_{m+1}^\ast)^r+\ldots+(S_K S_K^\ast)^r]\big\|_{L^2\to L^2}\lesssim 2^{-\delta m}.
\end{split}
\end{equation}
\end{lemma}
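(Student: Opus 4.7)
The plan is to establish per-summand operator bounds $\|S_m(S_n^\ast S_n)^r\|_{L^2\to L^2}\lesssim 2^{-\delta J_m}$ (and the analog for the adjoint version) for each $n\in[m+1,K]$, and then sum. Since the iterated separation \eqref{tj50} forces $J_m\geq 2^{m-1}C(d)$, a uniform-in-$n$ bound of this form, summed geometrically using the rapid growth $J_{n+1}\geq 2J_n$ of the scales, gives the claimed $2^{-\delta m}$ decay. The argument parallels the proof of Lemma \ref{separated2}, combined with the kernel-structure analysis of Lemma \ref{separated3}, but the wide separation of scales between $S_m$ and $S_n$ forces a new case analysis for the denominators $q$.

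I first apply Proposition \ref{majarcs} to $(S_n^\ast S_n)^r$, writing its convolution kernel as $\sum_{(j_\ast,k_\ast)}[K^{(n)}_{j_\ast,k_\ast}+E^{(n)}_{j_\ast,k_\ast}]$ with $(j_\ast,k_\ast)=(j_1,k_1,\ldots,j_r,k_r)\in[J_n(1-\kappa),J_n]^{2r}$. The error piece has total $L^1$-mass $\lesssim J_n^{2r}\,2^{-J_n/4}\lesssim 2^{-J_n/5}$ and, composed with $S_m$, contributes operator norm $\lesssim 2^{-J_n/5}$, summable in $n$. For the main kernel I combine the $(j_\ast,k_\ast)$-sum as in Lemma \ref{separated3} to obtain the $N_n$-type representation
\[
\sum_{(j_\ast,k_\ast)}K^{(n)}_{j_\ast,k_\ast}(h)=\sum_{a/q\in\mathcal{S}_{2^{3d^2\eps J_n}}}e^{2\pi ih\cdot a/q}\,S(a/q)\,N^{(a/q)}_n(h),
\]
with $N^{(a/q)}_n$ smooth on scale $2^{J_n}$ by \eqref{tj22.4}. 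Composing with $S_m=\sum_{j\in[J_m(1-\kappa),J_m]}H_j$ and mimicking the passage \eqref{tj5}--\eqref{tj9} of Lemma \ref{separated2}, I freeze $N^{(a/q)}_n(A_0(n_0)^{-1}\cdot h)$ at $N^{(a/q)}_n(h)$; the error is $\lesssim 2^{J_m-J_n}$, harmless by the separation $J_n\geq 2J_m$.

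The essential remaining quantity is the trigonometric sum
\[
\mathcal{T}_{j,a/q}(h):=\sum_{n_0\in\Z}K_j(n_0)\,e^{2\pi i(A_0(n_0)^{-1}\cdot h)\cdot a/q}.
\]
A direct calculation using the group law \eqref{mult} and the polynomial form $[A_0(n)]_{l_10}=n^{l_1}$, $[A_0(n)]_{l_1l_2}=0$ for $l_2\ne 0$, together with $h\in\G_0$, verifies that $(A_0(qn'+m')^{-1}\cdot h)\cdot a/q\equiv(A_0(m')^{-1}\cdot h)\cdot a/q\pmod{\Z}$, so (as in \eqref{tj8})
\[
\mathcal{T}_{j,a/q}(h)=\sum_{m'\in Z_q}e^{2\pi i(A_0(m')^{-1}\cdot h)\cdot a/q}\sum_{n'\in\Z}K_j(qn'+m').
\]
For small $q$ (say $q\leq 2^{j/(10d)}$), Poisson summation together with $\widehat{K_j}(0)=0$ and the $C^1$-smoothness of $K_j$ yields $|\mathcal{T}_{j,a/q}(h)|\lesssim 2^{-\delta j}$. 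For larger $q$, I instead pair the trivial bound $|\mathcal{T}_{j,a/q}(h)|\lesssim 1$ with the rapid Weyl decay $|S(a/q)|\lesssim q^{-(10d)^{10}}$ from Lemma \ref{Saq}; counting that the number of fractions with denominator $q$ is $\lesssim q^{|Y_d|}$, the total contribution from $q>2^{j/(10d)}$ is $\lesssim\sum_{q>2^{j/(10d)}}q^{|Y_d|+1-(10d)^{10}}\lesssim 2^{-\delta j}$. Since $j\gtrsim J_m$, these bounds yield $\lesssim 2^{-\delta J_m}$ at the level of the frozen kernel; the $N'_n$-cancellation technology of Lemma \ref{separated3} converts this, together with the freezing error, into the desired operator-norm bound. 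The adjoint version $\|S_m^\ast(S_nS_n^\ast)^r\|$ is handled identically via \eqref{mc6.4} and $\widetilde{S}(a/q)$.

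The main obstacle is controlling $\mathcal{T}_{j,a/q}(h)$ uniformly across the full range $q\leq 2^{3d^2\eps J_n}$, which—owing to the scale separation $J_n\gg J_m$—can vastly exceed $2^j\approx 2^{J_m}$. This difficulty is absent in Lemma \ref{separated2} because the condition $k\geq j/2$ confines the relevant denominators to the cancellation-friendly regime $q\leq 2^{O(\eps)j}$. Here one is forced to balance partial cancellation in the Poisson-summed inner $K_j$-sum against the Weyl-type decay of $S(a/q)$, with Lemma \ref{Saq} essential in the tail; simultaneously, the cancellation structure of Lemma \ref{separated3} must be invoked to convert the $O(2^{4\eps J_n})$ $L^1$-losses of the $N_n$-kernels into operator bounds that are uniform in $J_n$, so that summation over $n\in[m+1,K]$ produces the advertised geometric-in-$J_m$ decay.
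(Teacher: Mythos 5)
Your overall blueprint (apply Proposition \ref{majarcs} to $(S_n^\ast S_n)^r$, isolate the arithmetic exponential sum $\mathcal{T}_{j,a/q}$, split into small and large denominators, and use Lemma \ref{Saq} for the tail) matches the paper's, and your analysis of $\mathcal{T}_{j,a/q}$ is essentially the paper's $Z_m$ and the bounds \eqref{tj56}--\eqref{tj57}. However, the opening step of your plan --- ``establish per-summand operator bounds $\|S_m(S_n^\ast S_n)^r\|\lesssim 2^{-\delta J_m}$ for each $n$ and then sum'' --- contains a genuine gap. The per-summand bound you propose is \emph{uniform in $n$}: the main (frozen) term, e.g.\ the $q=1$ contribution with kernel $\mathcal{T}_{j,1}\cdot N_n(h)$, has operator norm $\approx 2^{-J_m}$ with no decay whatsoever in $n$, since the operator with kernel $N_n$ has norm $O(1)$ for every $n$. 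The triangle inequality over $n\in[m+1,K]$ then produces a factor $K-m$, and $K$ is not controlled by $m$ or $J_m$ (only $J_{n+1}\geq 2J_n$ is assumed; $J_K$, hence $K$, can be arbitrarily large for fixed $m$). The ``geometric summation using $J_{n+1}\geq 2J_n$'' you invoke is valid only for the pieces that genuinely decay in $n$ (the Proposition \ref{majarcs} errors $\lesssim 2^{-J_n/5}$ and the freezing error $\lesssim 2^{J_m-J_n}$), not for the main term.

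This is precisely why the paper does not decouple the $n$-sum. Instead it keeps $\sum_{n>m}$ intact and runs a second Cotlar--Stein argument on the operators $V_nF(g)=\sum_h F(h^{-1}\cdot g)N_n'(h)Z_m(h)$, proving \eqref{tj58}: $\|V_nV_{n'}^\ast\|+\|V_n^\ast V_{n'}\|\lesssim 2^{-(n'-n)/100}2^{-J_m/100}$. Three structural facts make this work and are absent from your write-up: (i) the kernels $N_n$ must be corrected to $N_n'$ as in \eqref{tj30.4} so as to have exact mean zero on each coset of $\mathbb{H}_q$; (ii) the crucial bi-invariance $Z_m(g_1\cdot h\cdot g_2)=Z_m(g_1\cdot g_2)$ for $h\in\mathbb{H}_q$ (\eqref{tj56}), which guarantees that the cancellation of $N_n'$ survives multiplication by $Z_m$ in the $V_nV_{n'}^\ast$ computation; and (iii) the uniform smallness $\sup_g|Z_m(g)|\lesssim 2^{-J_m/2}$ for $q\leq 2^{\eps J_m}$ (\eqref{tj57}), which injects the factor $2^{-J_m/100}$ into every Cotlar--Stein estimate and hence into the norm of the \emph{entire} sum over $n$, uniformly in $K$. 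Without an almost-orthogonality mechanism of this kind in the $n$-variable, your argument cannot reach \eqref{tj51}.
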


\begin{proof}[Proof of Lemma \ref{separated5}] As before, we focus on the bound on the first term in \eqref{tj51}. We already know from Lemma \ref{separated3} that
\begin{equation*}
\big\|(S_{m+1}^\ast S_{m+1})^r+\ldots+(S_K^\ast S_K)^r\big\|_{L^2\to L^2}\lesssim 1,\qquad m=1,\ldots,K-1,
\end{equation*}
so it remains to prove that composition with the operator $S_m$ contributes an additional factor of $2^{-\delta m}$.

We fix $m$ and apply Proposition \ref{majarcs} to the operators $(S_n^\ast S_n)^r$, $n=m+1,\ldots,K$. The contribution of the error terms is clearly acceptable. For $n=m+1,\ldots,K$ and $a/q\in \mathcal{S}_{2^{3d^2\eps J_n}}$ let
\begin{equation}\label{tj52}
U_n^{a/q}F(g)=\sum_{h\in\G_0}F(h^{-1}\cdot g)2^{2\pi i h\cdot a/q}N_n(h),
\end{equation}
where $N_n$ are the kernels defined in \eqref{tj22}. After rearranging the sum, for \eqref{tj51} it suffices to prove that
\begin{equation*}
\sum_{a/q\in \mathcal{S}_\infty}S(a/q)\Big\|S_m\sum_{n\in[m+1,K]\cap\Z,\,2^{3d^2\eps J_n}\geq q}U_n^{a/q}\Big\|_{L^2\to L^2}\lesssim 2^{-\delta m}.
\end{equation*}
We already know, see \eqref{tj23}, that
\begin{equation*}
\Big\|S_m\sum_{n\in[m+1,K]\cap\Z,\,2^{3d^2\eps J_n}\geq q}U_n^{a/q}\Big\|_{L^2\to L^2}\lesssim q^{(4d)^4}.
\end{equation*}
In view of the rapid decay of the coefficients $S(a/q)$, see Lemma \ref{Saq}, it only remains to estimate the contribution of fractions $a/q$ with denominators $q$ small relative to $2^{J_m}$; more precisely, it remains to prove that for any $m\in[1,K-1]\cap \Z$ and any $a/q\in \mathcal{S}_{2^{\eps J_m}}$
\begin{equation}\label{tj53}
\Big\|S_m\sum_{n\in[m+1,K]\cap\Z}U_n^{a/q}\Big\|_{L^2\to L^2}\lesssim 2^{-\delta m}q^{(4d)^4}.
\end{equation}

The kernel of the operator $S_mU_n^{a/q}$, $n\geq m+1$, is
\begin{equation*}
g\to\sum_{t\in\Z}\sum_{j\in[J_m(1-\kappa),J_m]}K_j(t) e^{2\pi i(A_0(t)^{-1}\cdot g)\cdot a/q}N_n(A_0(t)^{-1}\cdot g)
\end{equation*}
which we write as
\begin{equation*}
g\to Z_m(g)N_n(g)+\sum_{t\in\Z}\sum_{j\in[J_m(1-\kappa),J_m]}K_j(t) e^{2\pi i(A_0(t)^{-1}\cdot g)\cdot a/q}[N_n(A_0(t)^{-1}\cdot g)-N_n(g)]
\end{equation*}
where
\begin{equation}\label{tj54}
Z_m(g)=\sum_{t\in\Z}\sum_{j\in[J_m(1-\kappa),J_m]}K_j(t) e^{2\pi i(A_0(t)^{-1}\cdot g)\cdot a/q}.
\end{equation}
It follows from \eqref{tj22.4} and the separation condition \eqref{tj50} that
\begin{equation*}
\Big\|\sum_{t\in\Z}\sum_{j\in[J_m(1-\kappa),J_m]}K_j(t) e^{2\pi i(A_0(t)^{-1}\cdot g)\cdot a/q}[N_n(A_0(t)^{-1}\cdot g)-N_n(g)]\Big\|_{L^1_g(\G_0)}\lesssim 2^{-J_n/4}
\end{equation*}
Therefore, for \eqref{tj53} it remains to prove that for any $m\in[1,K-1]\cap \Z$ and any $a/q\in \mathcal{S}_{2^{\eps J_m}}$
\begin{equation}\label{tj55}
\Big\|\sum_{h\in G_0}F(h^{-1}\cdot g)\sum_{n\in[m+1,K]\cap\Z}N_n(h)Z_m(h)\Big\|_{L^2(\G_0)}\lesssim 2^{-\delta m}q^{(4d)^4}\|F\|_{L^2(\G_0)}.
\end{equation}

We examine now the functions $Z_m:\G_0\to\mathbb{C}$ defined in \eqref{tj54}. Clearly,
\begin{equation}\label{tj56}
Z_m(g_1\cdot h\cdot g_2)=Z_m(g_1\cdot g_2)\qquad\text{ for any }g_1.g_2\in\G_0\text{ and }h\in \mathbb{H}_q,
\end{equation}
where the subgroup $\mathbb{H}_q$ is defined in \eqref{tj30.1}. Moreover, for any $g\in\G_0$,
\begin{equation*}
\begin{split}
|Z_m(g)|&\leq \sum_{y\in Z_q}\Big|\sum_{x\in\Z}\sum_{j\in[J_m(1-\kappa),J_m]}K_j(qx+y) e^{2\pi i(A_0(qx+y)^{-1}\cdot g)\cdot a/q}\Big|\\
&\leq \sum_{y\in Z_q}\sum_{j\in[J_m(1-\kappa),J_m]}\Big|\sum_{x\in\Z}K_j(qx+y)\Big|.
\end{split}
\end{equation*}
It follows from \eqref{mc2} and the assumption $q\leq 2^{\eps J_m}$ that
\begin{equation}\label{tj57}
\sup_{g\in\G_0}|Z_m(g)|\lesssim 2^{-J_m/2}.
\end{equation}

We turn now to the proof of \eqref{tj55}, which is similar to the proof of \eqref{tj23}. The functions $Z_m$ replace the oscillatory factors $h\to e^{2\pi ih\cdot a/q}$; these functions satisfy the identities \eqref{tj56} and the estimates \eqref{tj57}, which provide the additional exponential decay in $m$.  We define the kernels $N'_n$ as in \eqref{tj30.4} and the operators
\begin{equation*}
V_nF(g)=\sum_{h\in\G_0}F(h^{-1}\cdot g)N'_n(h)Z_m(h).
\end{equation*}
In view of the Cotlar--Stein lemma, it suffices to prove that for any $n'\geq n\geq m+1$
\begin{equation}\label{tj58}
\|V_nV_{n'}^\ast\|_{L^2\to L^2}+\|V_n^\ast V_{n'}\|_{L^2\to L^2}\lesssim 2^{-(n'-n)/100}2^{-J_m/100}.
\end{equation}

Using \eqref{tj30.3} and \eqref{tj56}, for any $h\in\G_0$ and $k\in[m+1,K]\cap\Z$
\begin{equation*}
\sum_{x\in\G_0}N'_k(x)Z_m(x)\overline{Z_m}(h\cdot x)=\sum_{x\in\G_0}N'_k(x)Z_m(x)\overline{Z_m}(x\cdot h)=0.
\end{equation*}
Therefore, assuming first that $n'\geq n+1$ in \eqref{tj58}, we write
\begin{equation*}
\begin{split}
&(V_nV_{n'}^\ast)F(g)=\sum_{h\in\G_0}F(h\cdot g)\Big[\sum_{x\in\G_0}N'_n(x)Z_m(x)\overline{Z_m}(h\cdot x)[\overline{N'_{n'}}(h\cdot x)-\overline{N'_{n'}}(h)]\Big],\\
&(V_n^\ast V_{n'})F(g)=\sum_{h\in\G_0}F(h^{-1}\cdot g)\Big[\sum_{x\in\G_0}\overline{N'_n}(x)\overline{Z_m}(x)Z_m(x\cdot h)[N'_{n'}(x\cdot h)-N'_{n'}(h)]\Big].
\end{split}
\end{equation*}
Therefore, using \eqref{tj57},
\begin{equation*}
\begin{split}
\|V_n&V_{n'}^\ast\|_{L^2\to L^2}+\|V_n^\ast V_{n'}\|_{L^2\to L^2}\\
&\lesssim 2^{-J_m}\Big\|\sum_{x\in\G_0}|N'_n(x)|\,[|N'_{n'}(h\cdot x)-N'_{n'}(h)|+|N'_{n'}(x\cdot h)-N'_{n'}(h)|]\Big\|_{L^1_h(\G_0)},
\end{split}
\end{equation*}
and the desired bound \eqref{tj58} follows from \eqref{tj32} and \eqref{tj32.5} in this case.

Finally, to prove \eqref{tj58} when $n=n'$, it suffices to prove that
\begin{equation*}
\Big\|\sum_{h\in\G_0}F(h)N'_n(g\cdot h)Z_m(g\cdot h)\Big\|_{L^2_g}\lesssim 2^{-J_m/20}\|F\|_{L^2},
\end{equation*}
for any $F\in L^2(\G_0)$ and $n\geq m+1$. Using the decomposition \eqref{tj30.1}-\eqref{tj30.2}, it suffices to prove that
\begin{equation*}
\sum_{b\in R_q}\Big\|\sum_{x\in\mathbb{H}_q}F(x\cdot b)N'_n(g\cdot x\cdot b)Z_m(g\cdot x\cdot b)\Big\|_{L^2_g}\lesssim 2^{-J_m/20}\|F\|_{L^2}.
\end{equation*}
Using \eqref{tj56}-\eqref{tj57}, it suffices to prove that for any functions $F_b\in L^2(\mathbb{H}_q)$, $b\in R_q$,
\begin{equation*}
\sum_{b\in R_q}\Big\|\sum_{x\in\mathbb{H}_q}F_b(x)N'_n(g\cdot x\cdot b)\Big\|_{L^2_g}\lesssim 2^{J_m/4}\big[\sum_{b\in R_q}\|F_b\|_{L^2(\mathbb{H}_q)}^2\big]^{1/2}.
\end{equation*}
This bound was already proved in Lemma \ref{separated3}, see \eqref{tj40.5}.
\end{proof}

\section{Estimates on oscillatory sums and oscillatory integrals}\label{oscil}

With the notation in section \ref{transference}, for $r\geq 1$ let $D,\widetilde{D}:\mathbb{R}^r\times\mathbb{R}^r\to\G_0^\#$,
\begin{equation}\label{pro0.3}
\begin{split}
&D((n_1,\ldots,n_r),(m_1,\ldots,m_r))=A_0(n_1)^{-1}\cdot A_0(m_1)\cdot\ldots\cdot A_0(n_r)^{-1}\cdot A_0(m_r),\\
&\widetilde{D}((n_1,\ldots,n_r),(m_1,\ldots,m_r))=A_0(n_1)\cdot A_0(m_1)^{-1}\cdot\ldots\cdot A_0(n_r)\cdot A_0(m_r)^{-1},
\end{split}
\end{equation}
By definition, we have
\begin{equation*}
[A_0(n)]_{l_1l_2}=\begin{cases}
n^{l_1}&\text{ if }l_2=0,\\
0&\text { if }l_2\geq 1,
\end{cases}
\qquad
[A_0(n)^{-1}]_{l_1l_2}=\begin{cases}
-n^{l_1}&\text{ if }l_2=0,\\
n^{l_1+l_2}&\text { if }l_2\geq 1.
\end{cases}
\end{equation*}
Thus, for $x=(x_1,\ldots,x_r)\in\mathbb{R}^r$ and $y=(y_1,\ldots,y_r)\in\mathbb{R}^r$
\begin{equation}\label{pro0.4}
[D(x,y)]_{l_1l_2}=\begin{cases}
\sum\limits_{j=1}^r(y_j^{l_1}-x_j^{l_1})&\text{ if }l_2=0,\\
\sum\limits_{1\leq j_1<j_2\leq r}(y_{j_1}^{l_1}-x_{j_1}^{l_1})(y_{j_2}^{l_2}-x_{j_2}^{l_2})+\sum\limits_{j=1}^r(x_j^{l_1+l_2}-x_j^{l_1}y_j^{l_2})&\text{ if }l_2\geq 1,
\end{cases}
\end{equation}
and
\begin{equation}\label{pro0.5}
[\widetilde{D}(x,y)]_{l_1l_2}=\begin{cases}
\sum\limits_{j=1}^r(x_j^{l_1}-y_j^{l_1})&\text{ if }l_2=0,\\
\sum\limits_{1\leq j_1<j_2\leq r}(x_{j_1}^{l_1}-y_{j_1}^{l_1})(x_{j_2}^{l_2}-y_{j_2}^{l_2})+\sum\limits_{j=1}^r(y_j^{l_1+l_2}-x_j^{l_1}y_j^{l_2})&\text{ if }l_2\geq 1.
\end{cases}
\end{equation}

The multi-variable polynomials $D$ and $\widetilde{D}$ appear when we consider high powers of our singular integral operators, see for example the formula \eqref{mc7.5}. In this section we prove two estimates on certain oscillatory sums and integrals involving these polynomials.

For integers $P\geq 1$ assume $\phi_P^{(j)},\psi_P^{(j)}:\mathbb{R}\to\mathbb{R}$, $j=1,\ldots,r$, are $C^1$ functions with the properties
\begin{equation}\label{pro0.1}
\sup_{j=1,\ldots,r}[|\phi_P^{(j)}|+|\psi_P^{(j)}|]\leq \mathbf{1}_{[-P.P]},\qquad \sup_{j=1,\ldots,r}\int_{\mathbb{R}}|[\phi^{(j)}_P]'(x)|+|[\psi^{(j)}_P]'(x)|\,dx\leq 1.
\end{equation}
For $\theta=(\theta_{l_1l_2})_{(l_1,l_2)\in Y_d}\in\mathbb{R}^{|Y_d|}$, $r\geq 1$, and $P\geq 1$ let
\begin{equation*}
S_{P,r}(\theta)=\sum_{n,m\in\Z^r}e^{-2\pi i D((n_1,\ldots,n_r),(m_1,\ldots,m_r))\cdot\theta}\phi_P^{(1)}(n_1)\ldots \phi_P^{(r)}(n_r)\psi_P^{(1)}(m_1)\ldots \psi_P^{(r)}(m_r)
\end{equation*}
and
\begin{equation*}
\widetilde{S}_{P,r}(\theta)=\sum_{n,m\in\Z^r}e^{-2\pi i \widetilde{D}((n_1,\ldots,n_r),(m_1,\ldots,m_r))\cdot\theta}\phi_P^{(1)}(n_1)\ldots \phi_P^{(r)}(n_r)\psi_P^{(1)}(m_1)\ldots \psi_P^{(r)}(m_r).
\end{equation*}

\begin{proposition}\label{minarcs} There is a constant $\overline{C}=\overline{C}(d)$ sufficiently large such that for all $r\geq 1$ and all $\eps\in(0,1/2]$
\begin{equation}\label{pro0.2}
|S_{P,r}(\theta)|+|\widetilde{S}_{P,r}(\theta)|\lesssim_rP^{2r}P^{\overline{C}-r\eps/\overline{C}},\qquad P=1,2,\ldots,
\end{equation}
provided that there is a pair $(l_1,l_2)\in Y_d$ and an irreducible fraction $a/q\in\mathbb{Q}$, $q\in\Z_+^\ast$, such that
\begin{equation*}
|\theta_{l_1l_2}-a/q|\leq 1/q^2\text{ and }q\in[P^\eps,P^{l_1+l_2-\eps}].
\end{equation*}
\end{proposition}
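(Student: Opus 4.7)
The plan is to follow a variant of the Weyl differencing method, as developed by Davenport \cite{Da1} and Birch \cite{Bi}, tailored to the specific structure of the polynomials $D$ and $\widetilde{D}$. Since $\widetilde{D}$ is obtained from $D$ by interchanging the roles of the $x$- and $y$-variables (compare \eqref{pro0.4} and \eqref{pro0.5}), the estimate for $\widetilde{S}_{P,r}$ will follow from the estimate for $S_{P,r}$ by a symmetric argument, and I focus only on the latter. I fix the pair $(l_1,l_2) \in Y_d$ and the irreducible fraction $a/q$ provided by the hypothesis.

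The key structural observation is that, when $l_2 \geq 1$, the mixed monomial $x_j^{l_1} y_j^{l_2}$ (for any fixed $j \in \{1,\ldots,r\}$) appears in the total phase $\theta \cdot D(x,y) = \sum_{(l'_1,l'_2) \in Y_d} \theta_{l'_1 l'_2} [D(x,y)]_{l'_1 l'_2}$ with coefficient exactly $-\theta_{l_1 l_2}$. Indeed, the strict inequality $l'_2 < l'_1$ in the definition of $Y_d$ forces $(l'_1,l'_2)=(l_1,l_2)$ as soon as one attempts to match the diagonal contribution $-x_j^{l'_1} y_j^{l'_2}$ of $[D]_{l'_1 l'_2}$ to $x_j^{l_1} y_j^{l_2}$, and the bilinear cross terms in the first line of \eqref{pro0.4} involve distinct indices $j_1 \neq j_2$ and so cannot contribute to a single-$j$ mixed monomial. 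This is the feature that makes $\theta_{l_1 l_2}$ available as a genuine ``leading coefficient'' to which Weyl's method can be applied. In the case $l_2 = 0$, the analogous role is played by the pure monomial $x_j^{l_1}$; its coefficient in the phase is a fixed combination of $\theta_{l'_1 l'_2}$ with $l'_1 + l'_2 = l_1$, and a shifted Dirichlet-type argument applies.

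I will then apply multi-variable Weyl differencing targeted at this monomial: freezing the $2r-2$ variables other than $(n_j, m_j)$, I take $l_1 - 1$ shifts in $n_j$ and $l_2 - 1$ shifts in $m_j$ (each step being a Cauchy--Schwarz followed by a translation). The bilinear part of the resulting reduced phase in $(n_j, m_j)$ has coefficient $\pm l_1!\, l_2!\, \theta_{l_1 l_2} \prod h_i$, with $h_i$ the shift parameters; residual higher-degree terms in $n_j$ alone, arising from the monomial $x_j^{l_1+l_2}$, are handled by an additional shift in $n_j$ or by direct summation. Dirichlet's inequality together with a classical counting lemma (see Davenport \cite{Da1}) then bounds the number of shift tuples for which the resulting linear coefficient approaches an integer, using the rational approximation of $\theta_{l_1 l_2}$ and the hypothesis $q \in [P^\eps, P^{l_1+l_2-\eps}]$. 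This yields a saving $P^{-\eps/\overline{C}_0}$, for some $\overline{C}_0 = \overline{C}_0(d)$, for the sum over the pair $(n_j, m_j)$ with the remaining variables frozen.

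The main obstacle is to accumulate these per-pair savings across all $r$ pairs $(n_j, m_j)$ so as to produce the factor $P^{r\eps/\overline{C}}$ in \eqref{pro0.2}. The cross terms $(y_{j_1}^{l_1} - x_{j_1}^{l_1})(y_{j_2}^{l_2} - x_{j_2}^{l_2})$ in $[D]_{l_1 l_2}$ couple distinct pairs and preclude a naive product factorization. The resolution, in the spirit of Birch, is to iterate: having obtained the saving for $j = 1$, I repeat the differencing for $j = 2$ with the pair-$1$ variables and shift parameters frozen, then $j = 3$, and so on. The observation in the second paragraph guarantees that the coefficient of $n_j^{l_1} m_j^{l_2}$ remains $-\theta_{l_1 l_2}$ regardless of the frozen data, so the savings compound multiplicatively. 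After $r$ iterations the total saving is $P^{-r\eps/\overline{C}_0}$, and the combinatorial losses from the repeated Cauchy--Schwarzes and from the bookkeeping of remainder terms are absorbed into the prefactor $P^{\overline{C}}$ by taking $\overline{C}=\overline{C}(d)$ sufficiently large (in particular, $\overline{C} \geq \overline{C}_0$).
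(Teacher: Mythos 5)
Your starting point is right (Weyl differencing in the style of Davenport--Birch, and the observation that the monomial $x_j^{l_1}y_j^{l_2}$ carries the coefficient $-\theta_{l_1l_2}$ in the phase), but the heart of the matter --- producing the factor $r$ in the exponent of the saving $P^{-r\eps/\overline{C}}$ --- is not actually established. Your mechanism for this is the claim that per-pair savings ``compound multiplicatively'' under a pair-by-pair iteration in which the other $2r-2$ variables are frozen. As described, this fails: once you freeze the other pairs and bound the inner sum over $(n_j,m_j)$ in absolute value (which is what a two-variable Weyl estimate gives you, uniformly in the frozen data), the remaining sum is a sum of nonnegative quantities and yields no further oscillatory gain, so you only recover a single factor $P^{-\eps/\overline{C}_0}$, i.e. the bound $P^{2r}P^{-\eps/\overline{C}_0}$ rather than $P^{2r}P^{\overline{C}-r\eps/\overline{C}}$. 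The alternative reading --- performing the Cauchy--Schwarz/differencing steps on the full sum one pair at a time --- doubles the untouched variables at each application of Cauchy--Schwarz and also does not produce the claimed multiplicative structure; the bilinear cross terms $(y_{j_1}^{l_1}-x_{j_1}^{l_1})(y_{j_2}^{l_2}-x_{j_2}^{l_2})$, which you correctly identify as the obstruction, are not actually dealt with by ``iterating in the spirit of Birch.''

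In the paper the $r$-fold gain is obtained by an entirely different route: one fixes $n$, differences \emph{simultaneously} in all $r$ coordinates of $m$ (the shifts are vectors $v^{(i)}\in\Z^r$), which linearizes the degree-$d_0$ part of the phase and reduces the problem to bounding $\sum_{v^{(1)},\ldots,v^{(d_0-1)}}\prod_{j=1}^{r}\min(P,\|B_j\|^{-1})$ for the multilinear forms $B_j$ of \eqref{pro400}. The factor $r$ in the exponent then comes from a \emph{counting} argument for the shift tuples at which many of the $B_j$ are nearly integral: Davenport's shrinking lemma (Lemma \ref{Dav}) to pass to boxes of size $P^\rho$, the Birch rank argument showing that on the minor arc the matrix $[A_j^{l_1l_2}]$ must be rank-deficient, and the elementary root-counting Lemma \ref{roots} applied to the resulting polynomial system, which saves one power of $P^{-\rho}$ for every other index $j$. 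Your proposal contains no substitute for this counting step. A secondary gap: the bilinear coefficient after your targeted differencing is not simply $\pm l_1!\,l_2!\,\theta_{l_1l_2}\prod h_i$; the diagonal monomials $x_j^{l_1'}y_j^{l_2'}$ and $x_j^{l_1'+l_2'}$ with $(l_1',l_2')\neq(l_1,l_2)$ also contribute, and one must first isolate the ``critical degree'' $d_0$ and remove the nearly-rational frequencies of higher degree (the paper does this via the Dirichlet approximations \eqref{pro1}--\eqref{pro2} and the $Q$-periodization with summation by parts) before the coefficient can be identified with a multiple of a single $\theta_{l_1l_2}$ on a minor arc.
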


To prove Proposition \ref{minarcs} we use a variant of the Weyl method, as in \cite{Da1} and \cite{Bi}. We provide all the details, for the sake of self-containedness, with the exception of the following key lemma, see Lemma 3.3 in \cite{Da1}:

\begin{lemma}\label{Dav}
Assume that $L_1,\ldots,L_n:\mathbb{R}^n\to\mathbb{R}$ are $n$ linear forms, $L_j(u)=\sum_{k=1}^n\lambda_{jk}u_k$, satisfying the symmetry condition
\begin{equation}\label{pro40}
\lambda_{jk}=\lambda_{kj},\,\,j,k=1,\ldots,n.
\end{equation}
Assume that $A>1$, $Z\in(0,1]$, and let $U(Z)$ denote the number of points $u\in\Z^n$ satisfying
\begin{equation*}
|u|\leq ZA,\qquad\sup_{j\in\{1,\ldots,n\}}\|L_j(u)\|\leq ZA^{-1},
\end{equation*}
where $\|y\|$ denotes the distance from $y$ to $\Z$ for any $y\in\mathbb{R}$. Then, for any $0<Z_1\leq Z_2\leq 1$,
\begin{equation*}
U(Z_2)\lesssim_n(Z_2/Z_1)^nU(Z_1).
\end{equation*}
\end{lemma}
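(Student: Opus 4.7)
The plan is to reformulate $U(Z)$ as a lattice point count for a symmetric convex body in $\mathbb{R}^{2n}$, apply the standard successive-minima estimate from the geometry of numbers, and use the symmetry hypothesis to halve the effective dimension from $2n$ to $n$. Let $\Lambda=(\lambda_{jk})$ denote the symmetric $n\times n$ coefficient matrix, and perform the unimodular change of variables $(u,v)\mapsto (u,w):=(u,v-\Lambda u)$ on $\mathbb{R}^{2n}$. Then $\Gamma:=\{(u,v-\Lambda u):u,v\in\mathbb{Z}^n\}$ is a lattice of covolume $1$, and since $A>1$ forces $ZA^{-1}<1$, at most $3^n$ integer vectors $v$ correspond to each admissible $u$, so
\[ U(Z)\;\asymp_n\;|ZK\cap\Gamma|,\qquad K:=[-A,A]^n\times [-A^{-1},A^{-1}]^n.\]
Letting $\mu_1\leq\cdots\leq\mu_{2n}$ denote the successive minima of $K$ with respect to $\Gamma$, Minkowski's second theorem and the standard counting lemma yield
\[ |tK\cap\Gamma|\;\asymp_n\;\prod_{i=1}^{2n}\max(1,t/\mu_i),\qquad t>0.\]

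The symmetry of $\Lambda$ enters through the alternating bilinear form
\[ B\bigl((u,w),(u',w')\bigr):=u\cdot w'-w\cdot u'.\]
For $(u,w),(u',w')\in\Gamma$ with $w=v-\Lambda u$ and $w'=v'-\Lambda u'$ ($u,v,u',v'\in\mathbb{Z}^n$) one computes
\[ B=(u\cdot v'-v\cdot u')-(u\cdot\Lambda u'-u'\cdot\Lambda u),\]
and the second parenthesis vanishes by $\lambda_{jk}=\lambda_{kj}$, whence $B(\Gamma,\Gamma)\subseteq\mathbb{Z}$. On the other hand, $(u,w),(u',w')\in tK$ gives $|u_jw'_j|,\,|w_ju'_j|\leq t^2$ and hence $|B|\leq 2nt^2$. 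Therefore, for $t<1/\sqrt{2n}$ the form $B$ vanishes identically on $(tK\cap\Gamma)^{2}$. Since $B$ is a nondegenerate symplectic form on $\mathbb{R}^{2n}$ with maximal isotropic subspaces of dimension $n$, the body $(1/\sqrt{2n})K$ cannot contain $n+1$ linearly independent vectors of $\Gamma$, forcing
\[ \mu_{n+1}\;\geq\;1/\sqrt{2n}.\]

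To conclude, estimate the ratio $U(Z_2)/U(Z_1)$ factor by factor in the product formula. For $i\leq n$, a direct case analysis on whether $\mu_i\leq Z_1$, $Z_1<\mu_i\leq Z_2$, or $\mu_i>Z_2$ shows that $\max(1,Z_2/\mu_i)/\max(1,Z_1/\mu_i)\leq Z_2/Z_1$. For $i>n$, we have $\mu_i\geq 1/\sqrt{2n}$, and since $Z_2\leq 1$, $\max(1,Z_2/\mu_i)\leq\sqrt{2n}$, so each factor is $\leq\sqrt{2n}$. Multiplying the $2n$ factors gives
\[ \frac{U(Z_2)}{U(Z_1)}\;\lesssim_n\;\Bigl(\frac{Z_2}{Z_1}\Bigr)^{n}(\sqrt{2n})^{n}\;\lesssim_n\;\Bigl(\frac{Z_2}{Z_1}\Bigr)^{n},\]
which is the desired bound. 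The main obstacle is the symplectic/isotropy step: showing that $B$ takes integer values on $\Gamma$ is exactly where the hypothesis $\lambda_{jk}=\lambda_{kj}$ is used, and without it one recovers only the weaker exponent $2n$ produced by Minkowski's theorem applied blindly.
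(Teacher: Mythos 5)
Your proof is correct. Note first that the paper itself does not prove this lemma: it is the one statement the authors explicitly leave to the literature, citing Lemma 3.3 of Davenport's \emph{Cubic forms in thirty-two variables}, so you are supplying an argument where the paper gives only a reference. Your route is the same in broad outline as Davenport's --- pass to the lattice $\Gamma=\{(u,v-\Lambda u)\}$ of covolume $1$ in $\mathbb{R}^{2n}$, compare $U(Z)$ with $|ZK\cap\Gamma|$ for the box $K=[-A,A]^n\times[-A^{-1},A^{-1}]^n$ (the $3^n$ multiplicity bound using $ZA^{-1}<1$ is fine, and the discrepancy between the $\ell^1$ and $\ell^\infty$ normalizations of $|u|$ is absorbed into $\lesssim_n$), and invoke the standard two-sided count $|tK\cap\Gamma|\asymp_n\prod_i\max(1,t/\mu_i)$ --- but your treatment of the key step where symmetry enters is genuinely different and cleaner. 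Davenport derives the two-sided pairing $\mu_i\,\mu_{2n+1-i}\asymp_n 1$ from a duality between $\Gamma$ and its polar lattice; you instead observe that the standard symplectic form $B$ is integer-valued on $\Gamma\times\Gamma$ precisely because $\Lambda=\Lambda^{T}$, is bounded by $2nt^2$ on $tK\times tK$, hence vanishes on $(tK\cap\Gamma)^2$ for $t<1/\sqrt{2n}$, forcing the span of those lattice points to be isotropic and so of dimension at most $n$. This gives only the one-sided bound $\mu_{n+1}\gtrsim_n 1$, but as your factor-by-factor estimate shows (each of the first $n$ factors contributes at most $Z_2/Z_1$, each of the last $n$ at most $\sqrt{2n}$ since $Z_2\leq 1$), that is all the lemma requires. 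The only points you lean on without proof are standard: the upper and lower counting bounds in terms of successive minima (both directions are needed, with constants depending only on $n$), and the strict inequality $t<1/\sqrt{2n}$ needed to conclude $B=0$ from $|B|<1$; neither is a gap.
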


\begin{proof}[Proof of Proposition \ref{minarcs}] We will only prove the estimate for $|S_{P,r}(\theta)|$; the estimate for $|\widetilde{S}_{P,r}(\theta)|$ follows by a very similar argument. It follows from \eqref{pro0.1} that $|S_{P,r}(\theta)|\lesssim_rP^{2r}$. Therefore, in proving \eqref{pro0.2} we may assume that $P\geq C_r$ and $r\geq\overline{C}^2/\eps$. We divide the proof in several steps.

{\bf{Step 1.}} For $n=(n_1,\ldots,n_r)$ fixed, let
\begin{equation*}
\begin{split}
&D^0(m)=D^0(m_1,\ldots,m_r)=D((n_1,\ldots,n_r),(m_1,\ldots,m_r))\in\Z^{|Y_d|},\\
&\Psi^0_P(m)=\psi_P^{(1)}(m_1)\ldots \psi_P^{(r)}(m_r).
\end{split}
\end{equation*}
It suffices to prove that for any $n=(n_1,\ldots,n_r)\in\Z^{r}$ fixed, with $|n_j|\leq P$,
\begin{equation}\label{pro0.8}
|S_{P,r}^n(\theta)|\lesssim_rP^{r}P^{\overline{C}-r\eps/\overline{C}}
\end{equation}
where
\begin{equation}\label{pro0}
S_{P,r}^n(\theta)=\sum_{w\in\Z^{r}}e^{-2\pi i D^0(w)\cdot\theta}\Psi^0_P(w).
\end{equation}
In addition, in view of \eqref{pro0.4},
\begin{equation}\label{pro0.9}
D(w)_{l_1l_2}\text{ is a polynomial of degree }l_1+l_2\text{ in }w\text{ for any }(l_1,l_2)\in Y_d.
\end{equation}

We fix a sequence $0<\delta_{2d-1}<\ldots<\delta_1<\eps$,
\begin{equation}\label{pro1.5}
\delta_l=\eps/C_0^l,\qquad C_0=C_0(d)\gg 1.
\end{equation}
Using Dirichlet's lemma, for any $(l_1,l_2)\in Y_d$ one can fix approximations
\begin{equation}\label{pro1}
\begin{split}
&\theta_{l_1l_2}=\frac{a_{l_1l_2}}{q_{l_1l_2}}+\beta_{l_1l_2},\,a_{l_1l_2},q_{l_1l_2}\in\Z,\\
&(a_{l_1l_2},q_{l_1l_2})=1,\,1\leq q_{l_1l_2}\leq P^{l_1+l_2-\delta_{l_1+l_2}},\,|\beta_{l_1l_2}|\leq (q_{l_1l_2}P^{l_1+l_2-\delta_{l_1+l_2}})^{-1}.
\end{split}
\end{equation}
In view of the hypothesis, there is $d_0\in\{1,\ldots,2d-1\}$ such that
\begin{equation}\label{pro2}
q_{l_1l_2}\leq P^{\delta_{l_1+l_2}}\text{ if }l_1+l_2\geq d_0+1\text{ and }q_{l_1l_2}\geq P^{\delta_{d_0}}\text{ for some }l_1,l_2\text{ with }l_1+l_2=d_0.
\end{equation}

Let
\begin{equation*}
\begin{split}
&D^{l}(w;v^{(1)},\ldots,v^{(l)})=D^{l-1}(w+v^{(l)};v^{(1)},\ldots,v^{(l-1)})-D^{l-1}(w;v^{(1)},\ldots,v^{(l-1)}),\\
&\Psi_P^l(w;v^{(1)},\ldots v^{(l)})=\Psi_P^{l-1}(w+v^{(l)};v^{(1)},\ldots v^{(l-1)})\Psi_P^{l-1}(w;v^{(1)},\ldots v^{(l-1)}),
\end{split}
\end{equation*}
for $l=1,2,\ldots$. Using the formula \eqref{pro0},
\begin{equation*}
\begin{split}
|S^n_{P,r}(\theta)|^2&\leq\sum_{v^{(1)}\in\Z^{r}}\Big|\sum_{w\in\Z^{r}}e^{-2\pi i(D^0(w+v^{(1)})-D^0(w))\cdot\theta}\Psi^0_P(w+v^{(1)})\Psi^0_P(w)\Big|\\
&\leq\sum_{v^{(1)}\in\Z^{r}}\Big|\sum_{w\in\Z^{r}}e^{-2\pi iD^1(w;v^{(1)})\cdot\theta}\Psi^1_P(w;v^{(1)})\Big|.
\end{split}
\end{equation*}
We repeat this estimate $d_0-1$ times\footnote{If $d_0=1$ then the formula \eqref{pro0} gives already the estimate \eqref{pro6}.}. Using the Cauchy inequality, it follows that
\begin{equation}\label{pro6}
\begin{split}
&|S^n_{P,r}(\theta)|^{2^{d_0-1}}P^{-r(2^{d_0-1}-d_0)}\\
&\lesssim _r\sum_{|v^{(1)}|+\ldots+|v^{(d_0-1)}|\lesssim_rP}\Big|\sum_{w\in\Z^{r}}e^{-2\pi iD^{d_0-1}(w;v^{(1)},\ldots,v^{(d_0-1)})\cdot\theta}\Psi^{d_0-1}_P(w;v^{(1)},\ldots,v^{(d_0-1)})\Big|.
\end{split}
\end{equation}

It follows from \eqref{pro0.9} that $[D^{d_0-1}(w;v^{(1)},\ldots,v^{(d_0-1)})]_{l_1l_2}$ is a polynomial of degree at most $l_1+l_2-d_0+1$ in $w$, for any $v^{(1)},\ldots,v^{(d_0-1)}\in\Z^{r}$ fixed. Let
\begin{equation*}
Q=\prod_{l_1+l_2\geq d_0+1}q_{l_1l_2},
\end{equation*}
see \eqref{pro1}. In view of the assumption \eqref{pro2},
\begin{equation*}
1\leq Q\leq P^{2d^2\delta_{d_0+1}},
\end{equation*}
and we estimate, for any $v^{(1)},\ldots,v^{(d_0-1)}\in\Z^{r}$ fixed,
\begin{equation}\label{pro20}
\begin{split}
&\Big|\sum_{w\in\Z^{r}}e^{-2\pi iD^{d_0-1}(w;v^{(1)},\ldots,v^{(d_0-1)})\cdot\theta}\Psi^{d_0-1}_P(w;v^{(1)},\ldots,v^{(d_0-1)})\Big|\\
&=\Big|\sum_{w\in\Z^{r}}e^{-2\pi i\sum_{l_1+l_2=d_0}D^{d_0-1}(w;v^{(1)},\ldots,v^{(d_0-1)})_{l_1l_2}\cdot\theta_{l_1l_2}}A(w)\Big|\\
&\leq\sum_{y\in Z_Q^{r}}\Big|\sum_{x\in\Z^{r}}e^{-2\pi i\sum_{l_1+l_2=d_0}D^{d_0-1}(x;v^{(1)},\ldots,v^{(d_0-1)})_{l_1l_2}\cdot Q\theta_{l_1l_2}}A(Qx+y)\Big|,
\end{split}
\end{equation}
where
\begin{equation*}
A(w)=e^{-2\pi i\sum_{l_1+l_2\geq d_0+1}D^{d_0-1}(w;v^{(1)},\ldots,v^{(d_0-1)})_{l_1l_2}\cdot\theta_{l_1l_2}}\Psi^{d_0-1}_P(w;v^{(1)},\ldots,v^{(d_0-1)}).
\end{equation*}
We examine now the function $A'(x)=A(Qx+y)$, $y\in Z_Q^{r}$ fixed. Using \eqref{pro1},
\begin{equation*}
\begin{split}
A'(x)=&A''(y,v^{(1)},\ldots,v^{(d_0-1)})\\
&\times\Psi^{d_0-1}_P(Qx+y;v^{(1)},\ldots,v^{(d_0-1)})e^{-2\pi i\sum_{l_1+l_2\geq d_0+1}D^{d_0-1}(Qx+y;v^{(1)},\ldots,v^{(d_0-1)})_{l_1l_2}\cdot\beta_{l_1l_2}},
\end{split}
\end{equation*}
where $x\in\Z^{r}$ and $|A''(y,v^{(1)},\ldots,v^{(d_0-1)})|=1$ . By definition, see also \eqref{pro0.4}, it is easy to see that $D^{d_0-1}(w;v^{(1)},\ldots,v^{(d_0-1)})_{l_1l_2}$ is a polynomial of degree at most $l_1+l_2$ in $w,n,v^{(1)},\ldots v^{(d_0-1)}$ with coefficients $\lesssim _r1$. Since $|\beta_{l_1l_2}|\leq P^{-l_1-l_2+\delta_{d_0+1}}$ and $1\leq Q\leq P^{2d^2\delta_{d_0+1}}$,
\begin{equation*}
\sup_{|x|\lesssim_rP}\Big|\partial_{x_1}^{\sigma_1}\ldots\partial_{x_{r}}^{\sigma_{r}}e^{-2\pi i\sum_{l_1+l_2\geq d_0+1}D^{d_0-1}(Qx+y;v^{(1)},\ldots,v^{(d_0-1)})_{l_1l_2}\cdot\beta_{l_1l_2}}\Big|\lesssim _rP^{(-1+4d^2\delta_{d_0+1})(\sigma_1+\ldots+\sigma_{2r})}
\end{equation*}
for all $y\in Z_Q^{r}$, all $n,v^{(1)},\ldots,v^{(d_0-1)}\in\Z^{r}$ with $|n|+|v^{(1)}|+\ldots+|v^{(d_0-1)}|\lesssim_rP$, and $\sigma_1,\ldots,\sigma_{r}\in\{0,1\}$. Therefore, by summation by parts, it follows from \eqref{pro20} that
\begin{equation*}
\begin{split}
&\Big|\sum_{w\in\Z^{r}}e^{-2\pi iD^{d_0-1}(w;v^{(1)},\ldots,v^{(d_0-1)})\cdot\theta}\Psi^{d_0-1}_P(w;v^{(1)},\ldots,v^{(d_0-1)})\Big|\\
&\lesssim _rP^{20rd^2\delta_{d_0+1}}\sup_{a_j,b_j\in[-2P,2P]}\Big|\sum_{x_j\in[a_j,b_j]\cap\Z}e^{-2\pi i\sum_{l_1+l_2=d_0}D^{d_0-1}(x;v^{(1)},\ldots,v^{(d_0-1)})_{l_1l_2}\cdot Q\theta_{l_1l_2}}\Big|\\
&\lesssim_rP^{20rd^2\delta_{d_0+1}}\prod_{j=1}^{r}\min(P,\|B_j(v^{(1)},\ldots,v^{(d_0-1)})\|^{-1}),
\end{split}
\end{equation*}
where
\begin{equation}\label{pro21}
B_j(v^{(1)},\ldots,v^{(d_0-1)})=\frac{d}{dx_j}\Big[\sum_{l_1+l_2=d_0}D^{d_0-1}(x;v^{(1)},\ldots,v^{(d_0-1)})_{l_1l_2}\cdot Q\theta_{l_1l_2}\Big].
\end{equation}
In view of \eqref{pro6}, it remains to prove that
\begin{equation}\label{pro22}
\sum_{|v^{(1)}|+\ldots+|v^{(d_0-1)}|\leq P}\prod_{j=1}^{r}\min(P,\|B_j(v^{(1)},\ldots,v^{(d_0-1)})\|^{-1})\lesssim_rP^{rd_0}P^{\overline{C}}P^{-40rd^2\delta_{d_0+1}},
\end{equation}
assuming that $P^{\delta_{d_0}}\leq q_{l_1l_2}\leq P^{l_1+l_2-\delta_{d_0}}$ for some $(l_1,l_2)\in Y_d$ with $l_1+l_2=d_0$, see \eqref{pro2}.

For later use, we provide below a description of the functions $B_j$, $j=1,\ldots,r$. Assuming that $l_1+l_2=d_0$ and
\begin{equation}\label{pro401}
D(w)_{l_1l_2}=\sum_{j_1,\ldots,j_{d_0}=1}^{r}\lambda^{l_1l_2}_{j_1\ldots j_{d_0}}w_{j_1}\cdot\ldots\cdot w_{j_{d_0}}
\end{equation}
for some real-valued coefficients $\lambda^{l_1l_2}_{j_1\ldots j_{d_0}}$ satisfying the symmetry condition
\begin{equation}\label{pro402}
\lambda^{l_1l_2}_{j_1\ldots j_{d_0}}=\lambda^{l_1l_2}_{j_{\sigma(1)}\ldots j_{\sigma(d_0)}}\quad\text { for any permutation }\sigma\text{ of the set }\{1,\ldots,d_0\},
\end{equation}
it follows from the definition that
\begin{equation}\label{pro400}
B_j(v^{(1)},\ldots,v^{(d_0-1)})=d_0!\sum_{l_1+l_2=d_0}Q\theta_{l_1l_2}\sum_{j_1,\ldots,j_{d_0-1}=1}^{r}\lambda^{l_1l_2}_{j_1\ldots j_{d_0-1}j}v^{(1)}_{j_1}\cdot\ldots\cdot v^{(d_0-1)}_{j_{d_0-1}}.
\end{equation}

The claim \eqref{pro22} is easy to verify if $(l_1,l_2)=(1,0)$, using directly the definition \eqref{pro0.3}. Therefore, we will assume from now on that $2\leq d_0\leq 2d-1$.
\medskip

{\bf{Step 2.}} We show now that it suffices to prove that
\begin{equation}\label{pro30}
\begin{split}
\big|\{v^{(1)},\ldots,v^{(d_0-1)}\in B_{\Z^{r}}(P):\sup_{j=1,\ldots,r}\|&B_j(v^{(1)},\ldots,v^{(d_0-1)})\|\leq P^{-1}\}\big|\\
&\lesssim _rP^{r(d_0-1)}P^{\overline{C}}P^{-80rd^2\delta_{d_0+1}},
\end{split}
\end{equation}
where, by definition, $B_{\Z^m}(R)=\{v\in\Z^m:|v|\leq R\}$. Indeed, assuming \eqref{pro30}, it follows that
\begin{equation*}
\sum_{v^{(2)},\ldots,v^{(d_0-1)}\in B_{\Z^{r}}(P)}N_1(v^{(2)},\ldots,v^{(d_0-1)})\lesssim_rP^{r(d_0-1)}P^{\overline{C}}P^{-80rd^2\delta_{d_0+1}},
\end{equation*}
where, for any $v^{(2)},\ldots,v^{(d_0-1)}\in B_{\Z^{r}}(P)$,
\begin{equation*}
N_1(v^{(2)},\ldots,v^{(d_0-1)})=\big|\{v^{(1)}\in B_{\Z^{r}}(P):\|B_j(v^{(1)},\ldots,v^{(d_0-1)})\|\leq P^{-1},\,\,j=1,\ldots,r\}\big|.
\end{equation*}
On the other hand, arguing as in \cite[Lemma 3.2]{Da1}, for any $v^{(2)},\ldots,v^{(d_0-1)}\in B_{\Z^{r}}(P)$
\begin{equation*}
\sum_{v^{(1)}\in B_{\Z^{r}}(P)}\prod_{j=1}^{r}\min(P,\|B_j(v^{(1)},\ldots,v^{(d_0-1)})\|^{-1})\lesssim_r N_1(v^{(2)},\ldots,v^{(d_0-1)})(P\log P)^{r}.
\end{equation*}
The desired bound \eqref{pro22} follows from these two estimates.
\medskip

{\bf{Step 3.}} Let
\begin{equation*}
\rho=(\delta_{d_0+1}\delta_{d_0})^{1/2},\,\,\,\delta_{d_0+1}\ll\rho\ll\delta_{d_0}.
\end{equation*}
We show now that it suffices to prove that
\begin{equation}\label{pro32}
\begin{split}
\big|\{v^{(1)},\ldots,v^{(d_0-1)}\in B_{\Z^{r}}(P^\rho):\sup_{j=1,\ldots,r}\|&B_j(v^{(1)},\ldots,v^{(d_0-1)})\|\leq P^{-d_0+(d_0-1)\rho}\}\big|\\
&\lesssim _rP^{r(d_0-1)\rho}P^{C_1-r\rho/C_1}.
\end{split}
\end{equation}
for some constant $C_1=C_1(d)$ sufficiently large. To prove that \eqref{pro32} implies \eqref{pro30}, we prove that for $l=0,\ldots,d_0-1$ the number $N_{\rho,l}$ of solutions
\begin{equation}\label{pro41}
\begin{split}
&v^{(1)},\ldots,v^{(l)}\in B_{\Z^{r}}(P),\,v^{(l+1)},\ldots,v^{(d_0-1)}\in B_{\Z^{r}}(P^\rho),\\
&\sup_{j=1,\ldots,r}\|B_j(v^{(1)},\ldots,v^{(d_0-1)})\|\leq P^{-(d_0-l)+(d_0-1-l)\rho},
\end{split}
\end{equation}
satisfies
\begin{equation}\label{pro42}
N_{\rho,l}\lesssim _rP^{rl(1-\rho)}P^{r(d_0-1)\rho}P^{C_1-r\rho/C_1}.
\end{equation}
In the case $l=0$ this is equivalent to the assumption \eqref{pro32}. The claim \eqref{pro42} follows by induction over $l$, using Lemma \ref{Dav} at each step. The symmetry condition \eqref{pro40} is satisfied, in view of \eqref{pro401}-\eqref{pro400}. The case $l=d_0-1$ gives the desired conclusion \eqref{pro30}.
\medskip

{\bf{Step 4.}} For $j=1,\ldots,r$ and $(l_1,l_2)\in Y_d$ with $l_1+l_2=d_0$ let
\begin{equation}\label{pro50}
A_j^{l_1l_2}(v^{(1)},\ldots,v^{(d_0-1)})=d_0!\sum_{j_1,\ldots,j_{d_0-1}=1}^{r}\lambda^{l_1l_2}_{j_1\ldots j_{d_0-1}j}v^{(1)}_{j_1}\cdot\ldots\cdot v^{(d_0-1)}_{j_{d_0-1}},
\end{equation}
see \eqref{pro401}-\eqref{pro400}. For any $v^{(1)},\ldots,v^{(d_0-1)}$ fixed we think of $A_j^{l_1l_2}(v^{(1)},\ldots,v^{(d_0-1)})$ as a $r\times d_1$ matrix, where
\begin{equation*}
d_1=|Y_{d,d_0}|,\qquad Y_{d,d_0}=\{(l_1,l_2)\in Y_d:l_1+l_2=d_0\}.
\end{equation*}

We show now that
\begin{equation}\label{pro51}
\begin{split}
&\{v^{(1)},\ldots,v^{(d_0-1)}\in B_{\Z^{r}}(P^\rho):\sup_{j=1,\ldots,r}\|B_j(v^{(1)},\ldots,v^{(d_0-1)})\|\leq P^{-d_0+(d_0-1)\rho}\}\\
&\subseteq \{v^{(1)},\ldots,v^{(d_0-1)}\in B_{\Z^{r}}(P^\rho):\mathrm{rank}\big[A_j^{l_1l_2}(v^{(1)},\ldots,v^{(d_0-1)})\big]\leq d_1-1\},
\end{split}
\end{equation}
provided that the constant $C_0$ fixed in \eqref{pro1.5} is sufficiently large (depending only on $d$). To see this, as in the proof of Lemma 2.5 in \cite{Bi}, assume, for contradiction, that $\sup_{j=1,\ldots,r}\|B_j(v^{(1)},\ldots,v^{(d_0-1)})\|\leq P^{-d_0+(d_0-1)\rho}$ for some $v^{(1)},\ldots,v^{(d_0-1)}\in B_{\Z^{r}}(P^\rho)$ for which $\mathrm{rank}\big[A_j^{l_1l_2}(v^{(1)},\ldots,v^{(d_0-1)})\big]= d_1$. Notice that
\begin{equation*}
B_j(v^{(1)},\ldots,v^{(d_0-1)})=\sum_{l_1+l_2=d_0}Q\theta_{l_1l_2}A_j^{l_1l_2}(v^{(1)},\ldots,v^{(d_0-1)}).
\end{equation*}
We could then solve the linear system in the variables $Q\theta_{l_1l_2}$ to deduce that
\begin{equation*}
Q\theta_{l_1l_2}=\frac{m_{l_1l_2}}{n_{l_1l_2}}+\delta_{l_1l_2},\,\,m_{l_1l_2},n_{l_1l_2}\in\mathbb{Z},\,\,1\leq n_{l_1l_2}\lesssim_r P^{d_1(d_0-1)\rho},\,\,|\delta_{l_1l_2}|\lesssim_rP^{-d_0+d_1(d_0-1)\rho}
\end{equation*}
for any $(l_1,l_2)\in Y_{d,d_0}$. Recalling the bound $1\leq Q\leq P^{2d^2\delta_{d_0+1}}$ and the definition $\rho=(\delta_{d_0+1}\delta_{d_0})^{1/2}$, this is clearly in contradiction with \eqref{pro1}-\eqref{pro2} if $P$ is sufficiently large relative to $r$ and $C_0=\delta_{d_0}/\delta_{d_0+1}$ is sufficiently large relative to $d$.

Therefore, for \eqref{pro32} it suffices to prove that
\begin{equation}\label{pro52}
\begin{split}
\big|\{v^{(1)},\ldots,v^{(d_0-1)}\in B_{\Z^{r}}(P^\rho):\mathrm{rank}\big[A_j^{l_1l_2}(v^{(1)},\ldots,v^{(d_0-1)})\big]\leq d_1-1\}\big|&\\
\lesssim _rP^{r(d_0-1)\rho}P^{C_1-r\rho/C_1}&.
\end{split}
\end{equation}

Recall that (see \eqref{pro0.4})
\begin{equation}\label{pol}
D^0(m)_{l_1l_2}=\begin{cases}
\sum\limits_{1\leq j\leq r} m_j^{l_1}+R^0_{l_1l_2}(m)&\text{ if }(l_1,l_2)=(d_0,0),\\
\sum\limits_{1\leq j_1<j_2\leq r} m_{j_1}^{l_1} m_{j_2}^{l_2} +R^0_{l_1l_2}(m)&\text{ if }(l_1,l_2)\in Y_{d,d_0},\,l_2\geq 1,
\end{cases}
\end{equation}
where $R^0_{l_1l_2}$ are polynomials in $m$ of degree at most $d_0-1$. These polynomials give no contribution to the values of $A_j^{l_1l_2}(v^{(1)},\ldots,v^{(d_0-1)})$. Using the definitions, it follows that for fixed $1\leq j\leq r$
\begin{equation}\label{multilin2.0}
A_j^{l_1l_2}(v^{(1)},\ldots,v^{(d_0-1)})=
d_0\sum\limits_{\si}v_j^{(\si_1)}\ldots v_j^{(\si_{d_0-1})}
\end{equation}
if $(l_1,l_2)=(d_0,0)$, and
\begin{equation}\label{multilin2.1}
\begin{split}
A_j^{l_1l_2}(v^{(1)},\ldots,v^{(d_0-1)})
&=l_1\sum\limits_\si \sum\limits_{j<k\leq r} v_j^{(\si_1)}\ldots v_j^{(\si_{l_1-1})}\,v_k^{(\si_{l_1})}\ldots v_k^{(\si_{d_0-1})}\\
&+l_2\sum\limits_\si\sum\limits_{1\leq k<j} v_k^{(\si_1)}\ldots v_k^{(\si_{l_1})}\,v_j^{(\si_{l_1+1})}\ldots v_j^{(\si_{d_0-1})},
\end{split}
\end{equation}
if $(l_1,l_2)\in Y_{d,d_0},\,l_2\geq 1$. Here $\si=(\si_1,\ldots ,\si_{d_0-1})$ runs through all the permutations of the set $\{1,2,\ldots,d_0-1\}$.
\medskip

{\bf{Step 5.}} We examine now the set in the left-hand side of \eqref{pro52}. Since the matrix coefficients $A_j^{l_1l_2}(v^{(1)},\ldots,v^{(d_0-1)})$ are integers and of size $\lesssim P^{(d_0-1)\rho}$, it is easy to see from Cramer's rule that if $\mathrm{rank}\big[A_j^{l_1l_2}(v^{(1)},\ldots,v^{(d_0-1)})\big]\leq d_1-1$ for some $v^{(1)},\ldots,v^{(d_0-1)}\in B_{\Z^{r}}(P^\rho)$ then there exists a set of integers $b_{l_1l_2}$ not all zero of size $|b_{l_1l_2}|\lesssim P^{C_2\rho}$ (with a constant $C_2$ depending only on $d$), such that
\begin{equation}\label{system1}
\sum_{(l_1,l_2)\in Y_{d,d_0}} b_{l_1 l_2}\,A_j^{l_1l_2}(v^{(1)},\ldots,v^{(d_0-1)})=0\qquad\text{for all}\ \ \ \ 1\leq j\leq r.
\end{equation}

For a given permutation $\si=(\si_1,\ldots,\si_{d_0-1})$ and a given pair $(l_1,l_2)\in Y_{d,d_0}$ such that $l_2\geq 1$, define
\begin{equation*}
T^\si_{l_1l_2}=\sum\limits_{k=1}^r v_k^{(\si_{l_1})}\ldots v_k^{(\si_{d_0-1})}.
\end{equation*}
We define, compare with \ref{multilin2.1},
\begin{equation}\label{multilin2.2}
\begin{split}
\widetilde{A}_j^{l_1l_2}(v^{(1)},\ldots,v^{(d_0-1)})
&=l_2\sum\limits_\si\sum\limits_{1\leq k<j} v_k^{(\si_1)}\ldots v_k^{(\si_{l_1})}\,v_j^{(\si_{l_1+1})}\ldots v_j^{(\si_{d_0-1})}\\
&-l_1\sum\limits_\si \sum\limits_{1\leq k\leq j} v_j^{(\si_1)}\ldots v_j^{(\si_{l_1-1})}\,v_k^{(\si_{l_1})}\ldots v_k^{(\si_{d_0-1})}\\
&+l_1\sum\limits_\si T^\si_{l_1l_2}\,v_j^{(\si_1)}\ldots v_j^{(\si_{l_1-1})}.
\end{split}
\end{equation}
The advantage of formula \ref{multilin2.2} is that for any fixed values of the parameters $T^\si_{l_1l_2}$, the quantities $\widetilde{A}_j^{l_1l_2}(v^{(1)},\ldots,v^{(d_0-1)})$ depend only on the variables $v^{(m)}_k$ for $1\leq m\leq d_0-1$ and $1\leq k\leq j$. We define also, compare with \eqref{multilin2.0},
\begin{equation*}
A_j^{d_00}(v^{(1)},\ldots,v^{(d_0-1)})=A_j^{d_00}(v^{(1)},\ldots,v^{(d_0-1)})=d_0\sum\limits_{\si}v_j^{(\si_1)}\ldots v_j^{(\si_{d_0-1})}.
\end{equation*}

Using these definitions and \eqref{system1}, we conclude that if $(v^{(1)},\ldots,v^{(d_0-1)})$ is an element of the set in the left-hand side of \eqref{pro52} then there are integers $b_{l_1l_2}$ (not all zero) and $T^\si_{l_1l_2}$ in $[-P,P]$ such that
\begin{equation*}
\sum_{(l_1,l_2)\in Y_{d,d_0}} b_{l_1 l_2}\,\widetilde{A}_j^{l_1l_2}(v^{(1)},\ldots,v^{(d_0-1)})=0\qquad\text{for all}\ \ \ \ 1\leq j\leq r.
\end{equation*}
Therefore, for \eqref{pro52} it suffices to prove that for any integers $b_{l_1l_2}$ (not all zero) and $T^\si_{l_1l_2}$ in $[-P,P]$
\begin{equation}\label{pro70}
\begin{split}
&\big|\{v^{(1)},\ldots,v^{(d_0-1)}\in B_{\Z^{r}}(P^\rho):\\
&\sum_{(l_1,l_2)\in Y_{d,d_0}} b_{l_1 l_2}\,\widetilde{A}_j^{l_1l_2}(v^{(1)},\ldots,v^{(d_0-1)})=0\,\text{for all }\,1\leq j\leq r\}\big|\lesssim _rP^{r(d_0-1)\rho}P^{-r\rho/C_1}.
\end{split}
\end{equation}
\medskip

{\bf{Step 6.}} Finally, we prove \eqref{pro70} using the simple Lemma \ref{roots} below. Let $1\leq j\leq r$ be a given even integer. For any given choice of the parameters $b_{l_1l_2}$ (not all zero), $T^\si_{l_1l_2}$ and for any given values of the variables $v^{(h)}_k,\ 1\leq h\leq d_0-1, 1\leq k\leq j-2$ we claim that that
\begin{equation}\label{key}
\sum_{(l_1,l_2)\in Y_{d,d_0}} b_{l_1 l_2}\,\widetilde{A}_j^{l_1l_2}(v^{(1)},\ldots,v^{(d_0-1)})
\end{equation}
is not identically zero as a polynomial in the variables $v^{(1)}_{j-1},\ v^{(1)}_j,\ldots,v^{(d_0-1)}_{j-1},\ v^{(d_0-1)}_j$. Indeed, if $b_{l_1l_2}\neq 0$ for a pair $(l_1,l_2)\in Y_{d,d_0}\setminus \{(d_0,0)\}$, then, for any permutation $\si$, the expression \ref{key} contains the term
\begin{equation*}
b_{l_1l_2} l_2v_{j-1}^{(\si_1)}\ldots v_{j-1}^{(\si_{l_1})}\,v_j^{(\si_{l_1+1})}\ldots v_j^{(\si_{d_0-1})}.
\end{equation*}
 If, on the other hand, $b_{d_00}\neq 0$ for $l_1=d_0-1,\ l_2=0$ but $b_{l_1l_2}=0$ for all pairs $(l_1,l_2)\in Y_{d,d_0}\setminus \{(d_0,0)\}$, then the expression \ref{key} takes the form
\begin{equation*}
b_{d_00}d_0\sum\limits_{\si}v_j^{(\si_1)}\ldots v_j^{(\si_{d_0-1})}
\end{equation*}
which is not identically zero.

Therefore we may apply estimate \ref{bound} repeatedly for $j=2,4,\ldots$. It follows that the number of solutions $(v^{(1)},\ldots,v^{(d_0-1)})\in B_{\Z^{(d_0-1)r}}(P^\rho)$ of the system of equations in \eqref{pro70} is $\lesssim P^{r(d_0-1)\rho-r\rho/2}$, as desired.
\end{proof}

\begin{lemma}\label{roots}
Assume that $P=P(x_1,\ldots,x_s)$ is a polynomial of degree $d$ in $s$ variables which is not identically $0$, and $A\subseteq\mathbb{R}$. Then
\begin{equation}\label{bound}
\big|\{(x_1,\ldots,x_s)\in A^s:\ P(x_1,\ldots,x_s)=0\}\big|\leq d|A|^{s-1}.
\end{equation}
\end{lemma}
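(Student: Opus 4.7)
The plan is to prove this by induction on the number of variables $s$. The base case $s=1$ is the fundamental fact that a nonzero univariate polynomial of degree $d$ has at most $d$ real roots, which gives the bound $d = d|A|^0$.

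For the inductive step, assume the claim holds for polynomials in $s-1$ variables. Given a nonzero polynomial $P(x_1,\ldots,x_s)$ of degree $d$, I would expand it as a polynomial in $x_s$ with coefficients that are polynomials in $x_1,\ldots,x_{s-1}$:
\begin{equation*}
P(x_1,\ldots,x_s) = \sum_{k=0}^{d'} Q_k(x_1,\ldots,x_{s-1})\, x_s^k,
\end{equation*}
where $d'$ is the $x_s$-degree of $P$ and $Q_{d'}$ is not identically zero. Let $e = \deg Q_{d'}$; since the total degree of $Q_{d'}(x_1,\ldots,x_{s-1})x_s^{d'}$ is at most $d$, we have $e \leq d - d'$.

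Next, I would split the zero set in $A^s$ according to whether the leading coefficient $Q_{d'}$ vanishes on $(x_1,\ldots,x_{s-1})$. If $Q_{d'}(x_1,\ldots,x_{s-1}) = 0$, then by the inductive hypothesis there are at most $e\,|A|^{s-2}$ such tuples $(x_1,\ldots,x_{s-1})$; combined with the trivial bound of $|A|$ choices for $x_s$, this contributes at most $e\,|A|^{s-1}$ zeros. If instead $Q_{d'}(x_1,\ldots,x_{s-1}) \neq 0$, then $P(x_1,\ldots,x_{s-1},\cdot)$ is a nonzero univariate polynomial of degree $d'$ in $x_s$, hence has at most $d'$ roots in $A$; this contributes at most $d'\,|A|^{s-1}$ zeros. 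Summing the two contributions yields at most $(d'+e)|A|^{s-1} \leq d\,|A|^{s-1}$, completing the induction.

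There is no real obstacle here; the argument is the standard Schwartz--Zippel-type bound, and the only thing to be careful about is tracking that the leading coefficient $Q_{d'}$ has degree at most $d-d'$ so the two contributions add to at most $d$. No results from earlier in the paper are needed.
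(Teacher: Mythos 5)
Your proof is correct and follows essentially the same route as the paper's: expand $P$ in one variable, split according to whether the leading coefficient (a nonzero polynomial in the remaining $s-1$ variables of degree at most $d-d'$) vanishes, and combine the bounds $d'|A|^{s-1}$ and $e|A|^{s-2}\cdot|A|$ to get $d|A|^{s-1}$. The only cosmetic difference is that the paper isolates the variable $x_1$ rather than $x_s$.
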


\begin{proof}[Proof of Lemma \ref{roots}] The statement is immediate when $s=1$ or $d=1$. We proceed by induction. Without loss of generality assume that
\begin{equation*}
P(x_1,\ldots,x_s)=Q(x_2,\ldots,x_s) x_1^{d_1}+R(x_1,\ldots,x_s)
\end{equation*}
where $Q(x_2,\ldots,x_s)$ is a polynomial of degree at most $d-d_1$ not identically zero. If $Q(x_2,\ldots,x_s)\neq 0$ then there are at most $d_1$ values of $x_1$ for which $P(x_1,x_2,\ldots,x_s)=0$. Thus, by induction, the left-hand side of \ref{bound} is estimated by
\begin{equation*}
d_1|A|^{s-1}+(d-d_1)|A|^{s-2}|A|=d|A|^{s-1},
\end{equation*}
as desired.
\end{proof}

We conclude this section with an estimate on an oscillatory integral. We think of $D,\widetilde{D}$ as functions defined on $\mathbb{R}^r\times\mathbb{R}^r$ taking values in $\mathbb{R}^{|Y_d|}$, given by \eqref{pro0.4} and \eqref{pro0.5}.

\begin{lemma}\label{intest}
Assume $\Phi:\mathbb{R}^r\times\mathbb{R}^r\to\mathbb{R}$ satisfies
\begin{equation}\label{cro1}
|\partial_{x_1}^{\sigma_1}\ldots\partial_{x_r}^{\sigma_r}\partial_{y_1}^{\va_1}\ldots\partial_{y_r}^{\va_r}\Phi(x,y)|\leq\mathbf{1}_{B_{\mathbb{R}^r}(1)}(x)\mathbf{1}_{B_{\mathbb{R}^r}(1)}(y)
\end{equation}
for any $\sigma_1,\ldots,\sigma_r,\va_1,\ldots,\va_r\in\{0,1\}$, where $B_{\mathbb{R}^m}(C)=\{x\in\mathbb{R}^m:|x|\leq C\}$. Then there is a constant $\overline{C}=\overline{C}(d)$ sufficiently large such that for any $\beta\in\mathbb{R}^{|Y_d|}$,
\begin{equation}\label{cro2}
\Big|\int_{\mathbb{R}^r\times\mathbb{R}^r}\Phi(x,y)e^{-2\pi iD(x,y)\cdot \beta}\,dxdy\Big|+\Big|\int_{\mathbb{R}^r\times\mathbb{R}^r}\Phi(x,y)e^{-2\pi i\widetilde{D}(x,y)\cdot \beta}dxdy\Big|\lesssim _r (1+|\beta|)^{\overline{C}-r/\overline{C}}.
\end{equation}
\end{lemma}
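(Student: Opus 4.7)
The plan is to mimic the Weyl-differencing argument used in the proof of Proposition \ref{minarcs}, adapted to the continuous setting; this analog is in fact simpler, since we do not need to handle rational approximations of the coefficients of $\beta$. We write the steps for the integral involving $D$; the $\widetilde{D}$ case is entirely analogous. We may assume $B:=|\beta|\geq C_r$ is large, else the bound is trivial from the compact support of $\Phi$. Choose $(l_1^\ast,l_2^\ast)\in Y_d$ with $|\beta_{l_1^\ast l_2^\ast}|\geq B/|Y_d|$, and set $d_0=l_1^\ast+l_2^\ast$. Writing $I_x=\int\Phi(x,y)e^{-2\pi i D(x,y)\cdot\beta}\,dy$, the compact support of $\Phi$ gives $|I|\lesssim_r \sup_x |I_x|$, reducing the problem to bounding $|I_x|$ for a fixed $x\in B_{\mathbb{R}^r}(1)$.

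For this fixed $x$, we plan to apply Cauchy--Schwarz in $y$ exactly $d_0-1$ times, defining $D^0(w)=D(x,w)$, $\Phi^0(w)=\Phi(x,w)$ and iterating the differencing as in Step 1 of Proposition \ref{minarcs}:
\[
|I_x|^{2^{d_0-1}}\lesssim_r \int_{|v^{(h)}|\lesssim 1}\Bigl|\int_{\mathbb{R}^r}\Phi^{d_0-1}(w;v)\,e^{-2\pi i D^{d_0-1}(w;v)\cdot\beta}\,dw\Bigr|\,dv^{(1)}\cdots dv^{(d_0-1)}.
\]
The polynomial $[D^{d_0-1}(w;v)]_{l_1l_2}$ vanishes when $l_1+l_2<d_0$, is linear in $w$ when $l_1+l_2=d_0$ with $w_j$-coefficient equal to $d_0!\,A_j^{l_1l_2}(v)$ from \eqref{pro50}, and has degree $l_1+l_2-d_0+1$ in $w$ when $l_1+l_2>d_0$. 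Thus the inner phase decomposes as $\sum_j B_j(v)w_j$, plus higher-degree terms of bounded degree with coefficients $O(B)$, where $B_j(v):=d_0!\sum_{l_1+l_2=d_0}\beta_{l_1l_2}A_j^{l_1l_2}(v)$.

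The inner integral is then handled by integration by parts in each $w_j$ direction separately, using the linear part as the main term and treating the higher-degree terms as a standard van der Corput perturbation, yielding
\[
\Bigl|\int_{\mathbb{R}^r}\Phi^{d_0-1}(w;v)\,e^{-2\pi i D^{d_0-1}(w;v)\cdot\beta}\,dw\Bigr|\lesssim_r \prod_{j=1}^r \min(1,|B_j(v)|^{-1/C_1})
\]
for some $C_1=C_1(d)$. Integrating over $v$ in the bounded region, we repeat the rank/sublevel argument of Steps 4--6 of the proof of Proposition \ref{minarcs}: the multilinear forms $A_j^{l_1l_2}(v)$ of \eqref{multilin2.0}--\eqref{multilin2.1} are non-degenerate, and Lemma \ref{roots} (formulated for arbitrary $A\subseteq\mathbb{R}$, hence applicable to intervals) controls the measure of the set of $v$ on which $\sup_j |B_j(v)|$ is small. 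This yields a $B^{C_2-r/C_2}$ bound on the $v$-integral, and taking $2^{1-d_0}$-th powers closes the argument.

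The main obstacle is this last step: extracting the linear-in-$r$ decay from the sublevel-set analysis of the $v$-integration. In the discrete proof, Lemma \ref{Dav} and the degree-count Lemma \ref{roots} are combined to bound the number of ``bad'' lattice points; the continuous analog requires that the Lebesgue measure of the corresponding ``bad'' subset of $v\in[-1,1]^{(d_0-1)r}$ scale the same way in $B$. This should follow from the same non-degeneracy of the forms $A_j^{l_1l_2}$ and an iterated application of Lemma \ref{roots} to intervals rather than finite sets, though verifying the precise exponents in the constant $\overline{C}$ requires some care.
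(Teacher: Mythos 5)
Your plan --- re-running the Weyl differencing of Proposition \ref{minarcs} directly in the continuous variables --- is not what the paper does, and as written it has two genuine gaps. For contrast, the paper's proof is a three-line transference: set $\eps=C_1/r$, choose $P\approx|\beta|^{1/\eps}$ so that $q:=P^{n_1+n_2}|\beta_{n_1n_2}|^{-1}$ is an integer (where $|\beta_{n_1n_2}|$ is the largest component), and rescale $x\to x/P$, $y\to y/P$. The integral becomes $P^{-2r}\int\Phi(x/P,y/P)e^{-2\pi iD(x,y)\cdot\theta}\,dxdy$ with $\theta_{l_1l_2}=P^{-(l_1+l_2)}\beta_{l_1l_2}$, and $\theta_{n_1n_2}=\pm1/q$ with $q\approx P^{n_1+n_2-\eps}$, which puts $\theta$ squarely in the minor-arc regime of Proposition \ref{minarcs}. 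One then bounds the discrepancy between the integral and the Riemann sum $P^{-2r}S_{P,r}(\theta)$ by $\sum_{(l_1,l_2)}|\theta_{l_1l_2}|P^{l_1+l_2-1}+P^{-1}\lesssim P^{-1/2}$ using \eqref{cro1}, and applies Proposition \ref{minarcs} to the sum. All of the delicate structure you are trying to rebuild is thus inherited from the discrete estimate rather than reproved.

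The concrete gaps in your argument are these. First, choosing $d_0=l_1^\ast+l_2^\ast$ where $|\beta_{l_1^\ast l_2^\ast}|$ is maximal does not make the components with $l_1+l_2>d_0$ perturbative: after $d_0-1$ differencings, $[D^{d_0-1}(w;v)]_{l_1l_2}$ for $l_1+l_2>d_0$ has degree $\geq 2$ in $w$ with coefficients that can still be of size $O(B)$ (nothing prevents several $|\beta_{l_1l_2}|$ at different degrees from being comparable to $B$). Then the $w_j$-derivative of the phase is not comparable to $B_j(v)$, and the claimed bound $\prod_j\min(1,|B_j(v)|^{-1/C_1})$ does not follow from integration by parts; decay, when present, is governed by the higher-order coefficients, which disconnects from the subsequent sublevel analysis of the $B_j$. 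In the discrete proof this is exactly what the Dirichlet pigeonhole \eqref{pro1}--\eqref{pro2} arranges (all frequencies of degree $>d_0$ are major-arc with tiny $\beta_{l_1l_2}$); you would need the continuous analogue, i.e.\ selecting $d_0$ as the largest degree at which $|\beta_{l_1l_2}|$ exceeds a suitable decreasing ladder of powers of $B$. Second, Lemma \ref{roots} is a root-counting statement: it bounds the \emph{cardinality} of the exact zero set over a set $A$, and for an interval it yields only that the zero set has Lebesgue measure zero. What your argument needs is quantitative: that the measure of $\{v\in[-1,1]^{(d_0-1)r}:\sup_j|B_j(v)|\leq 1\}$ is $\lesssim B^{-cr}$ when some coefficient at degree $d_0$ has size $\gtrsim B^{c'}$. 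That is a genuine multilinear sublevel-set estimate (provable, e.g., by iterating the one-variable bound that a degree-$k$ polynomial with a coefficient of size $M$ satisfies $|\{t\in[-1,1]:|p(t)|\leq\epsilon\}|\lesssim_k(\epsilon/M)^{1/k}$ one variable at a time, as in Step 6), and it is precisely where the $r$-dependent gain must be extracted; it cannot be deferred as bookkeeping. Until both points are supplied, the proposed proof is incomplete.
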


\begin{proof}[Proof of Lemma \ref{intest}] We will only prove the estimate on the first term in the left-hand side of \eqref{cro2}, using Proposition \ref{minarcs}. Let $C_0$, $C_1$ are suitably large fixed constants (depending on the constant $\overline{C}$ in Proposition \ref{minarcs}), assume $|\be|\geq C_0$, and choose $\eps=C_1/r$ in Proposition \ref{minarcs}. Assume that
\begin{equation*}
(n_1,n_2)\in Y_d,\qquad |\beta_{n_1n_2}|=\sup_{(l_1,l_2)\in Y_d}|\beta_{l_1l_2}|.
\end{equation*}
Let $P$ be a positive number, so that $P\approx |\be|^{1/\eps}$ and $q:=P^{n_1+n_2} |\be_{n_1n_2}|^{-1}$ is an integer.

By rescaling one may write
\begin{equation*}
I_D(\be):=\int_{\mathbb{R}^r\times\mathbb{R}^r} \Phi(x,y) e^{-2\pi i D(x,y)\cdot\be}\,dxdy = P^{-2r}\int_{\mathbb{R}^r\times\mathbb{R}^r} \Phi(\frac{x}{P},\frac{y}{P}) e^{-2\pi i D(x,y)\cdot\th}\,dx dy
\end{equation*}
where
\begin{equation*}
\theta_{l_1l_2}=P^{-(l_1+l_2)}\beta_{l_1l_2},\qquad (l_1,l_2)\in Y_d.
\end{equation*}
Note that $\th_{n_1n_2}=\pm 1/q$ with $q\approx P^{n_1+n_2-\eps}$. Therefore, by Proposition \ref{minarcs}, one has the estimate
\begin{equation*}
P^{-2r}\,S_{P,r}(\th):=P^{-2r}\,\sum_{(n,m)\in\mathbb{Z}^{r}\times\mathbb{Z}^r} \Phi(n/P,m/P) e^{-2\pi i D(n,m)\cdot\th} \lesssim_r P^{\bar{C}-r\eps/\bar{C}}\lesssim_r P^{-1}.
\end{equation*}

On the other hand writing $x=n+s$, $y=m+t$ with $m,n\in\Z^r$ and $s,t\in [0,1)^r$ it is easy to see that
\begin{equation*}
|I_D(\be)-P^{-2r}\,S_{P,r}(\th)|\lesssim \sum_{(l_1,l_2)\in Y_d}|\th_{l_1l_2}| P^{l_1+l_2-1}+P^{-1} \lesssim P^{-1/2}.
\end{equation*}
This gives the estimate $|I_D(\be)|\lesssim_r |\be|^{-r/2C_1}$ for $|\be|\geq C_0$ and the lemma follows.
\end{proof}

\section{An almost orthogonality lemma}\label{orthog}

We assume that $H$ is a Hilbert space, $S_m\in\mathcal{L}(H)$, $m=1,\ldots,K$, are self-adjoint operators, and
\begin{equation}\label{ma1}
\|S_m\|\leq 1,\quad m=1,\ldots,K.
\end{equation}
Let
\begin{equation*}
I=\{0,1\},\qquad S_{m,0}=S_m,\qquad S_{m,1}=0.
\end{equation*}
For any dyadic integer $p$ we define
\begin{equation}\label{ma2}
B_p=\sup_{i_1,\ldots,i_K\in I}\|S_{1,i_1}^p+S_{2,i_2}^p+\ldots+S_{K,i_K}^p\|.
\end{equation}
and, for any $m=1,\ldots,K-1$ and dyadic integer $p$
\begin{equation}\label{ma2.1}
\gamma_{m,p}=\sup_{i_m,\ldots,i_K\in I}\|S_{m,i_m}(S_{m+1,i_{m+1}}^p+\ldots+S_{K,i_K}^p)\|.
\end{equation}
We start with a lemma:

\begin{lemma}\label{mainlemma}
Assume that $S_{m,i}$, $B_p$, $\gamma_{mp}$ are as above and that there are constants $\delta_0>0$, $A\geq 1$ and a dyadic integer $p_0$ such that
\begin{equation}\label{ma3}
\gamma_{m,p_0}\leq A2^{-\delta_0m}(B_{p_0}+1)\quad\text{ for }m=1,\ldots,K-1.
\end{equation}
Then
\begin{equation}\label{ma8}
\begin{split}
&B_{1}\leq C(\delta_0,A,p_0),\\
&\gamma_{m,1}\leq C(\delta_0,A,p_0)2^{-\delta'_0m},\qquad m=1,\ldots,K-1,
\end{split}
\end{equation}
for some constants $C=C(\delta_0,A,p_0)\in[1,\infty)$ and $\delta'_0=\delta'_0(\delta_0,A,p_0)>0$.
\end{lemma}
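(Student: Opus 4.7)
The plan is to exploit self-adjointness of the $S_m$ to establish two doubling inequalities, reduce the problem to level $p=p_0$ where the hypothesis applies, and then propagate the resulting decay of $\gamma_{m,\cdot}$ down from $p_0$ to $p=1$.

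Since each $\sum_m S_{m,i_m}^p$ is self-adjoint, $B_p^2=\|(\sum_m S_{m,i_m}^p)^2\|$; expanding the square and controlling the off-diagonal by $\|S_{m,i_m}^{p-1}\|\leq 1$ together with $\|S_{m,i_m}(\sum_{n>m}S_{n,i_n}^p)\|\leq\gamma_{m,p}$ gives the doubling inequality
\begin{equation*}
(\mathrm{A}):\quad B_p^2\leq B_{2p}+2G_p,\qquad G_p:=\sum_{m=1}^{K-1}\gamma_{m,p}.
\end{equation*}
Applying the same identity $\|S_{m,i_m}T\|^2=\|S_{m,i_m}T^2S_{m,i_m}\|$ with $T=\sum_{n>m}S_{n,i_n}^p$ (self-adjoint) and expanding $T^2$ analogously yields
\begin{equation*}
(\mathrm{B}):\quad \gamma_{m,p}^2\leq\gamma_{m,2p}+2\sum_{n>m}\gamma_{n,p}.
\end{equation*}
Since $\|S_m\|\leq 1$, functional calculus on $S_m^2$ shows $S_m^{2q}\leq S_m^2$, hence $B_{2q}\leq B_2$ for all $q\geq 1$. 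Combining $(\mathrm{A})$ at $p=p_0$, the hypothesis $G_{p_0}\leq C_0 A(B_{p_0}+1)$ (with $C_0=(1-2^{-\delta_0})^{-1}$), and $B_{2p_0}\leq B_2$, gives the reduction $B_{p_0}^2\leq B_2+2C_0A(B_{p_0}+1)$.

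The heart of the argument is to propagate via $(\mathrm{B})$ down through the dyadic exponents $p_k:=p_0/2^k$, $k=0,1,\ldots,\log_2 p_0$. Assuming inductively $\gamma_{m,p_k}\leq\alpha_k 2^{-\delta_k m}(B_{p_k}+1)$, one applies $(\mathrm{B})$ at exponent $p_{k+1}$ and solves the recursion $\gamma_{m,p_{k+1}}^2\leq\gamma_{m,p_k}+2\sum_{n>m}\gamma_{n,p_{k+1}}$ by induction on $m$ starting from $m=K$ (where the tail vanishes, so $\gamma_{K,p_{k+1}}^2\leq\gamma_{K,p_k}$) and proceeding downward, choosing $\alpha_{k+1},\delta_{k+1}$ so that the resulting bound reproduces the ansatz at the new level with $\delta_{k+1}$ a controlled fraction of $\delta_k$. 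After $\log_2 p_0$ halvings one reaches $\gamma_{m,1}\leq C(\delta_0,A,p_0)\cdot 2^{-\delta_0' m}(B_1+1)$ with $\delta_0'>0$; feeding this back into $(\mathrm{A})$ at $p=1$ and running the analogous scheme at $p=2$ (using $B_4\leq B_2$ and the bootstrapped bound on $G_2$) gives a coupled system of polynomial inequalities for $B_1$ and $B_2$ with bounded leading constants, whose solution yields both conclusions in \eqref{ma8}.

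The main obstacle is the propagation via $(\mathrm{B})$: the tail $2\sum_{n>m}\gamma_{n,p/2}$ decays at the same exponential rate as any natural exponential ansatz for $\gamma_{m,p/2}$ itself, so a direct substitution cannot close the recursion. One must solve $(\mathrm{B})$ inductively starting from $m=K$, balance the tail against a controlled loss in the decay exponent at each halving, and track all constants carefully so that the final decay rate $\delta_0'$ and constant $C$ remain uniform in $K$ after the $\log_2 p_0$ iterations required to descend from $p_0$ to $p=1$.
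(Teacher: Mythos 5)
Your inequalities (A) and (B) are both correct, but the propagation step---which you rightly call the heart of the argument---has a genuine gap that no choice of constants $\alpha_k,\delta_k$ can repair. The inequality $\gamma_{m,p}^2\leq\gamma_{m,2p}+2\sum_{n>m}\gamma_{n,p}$ is too weak to transmit exponential decay in $m$ from level $2p$ down to level $p$: it is satisfied, for example, by $\gamma_{n,p}\equiv 1/2$ for all $n$ even when $\gamma_{n,2p}\equiv 0$, so (B) plus decay at level $2p$ implies no decay at all at level $p$. Your proposed downward induction in $m$ makes this concrete: $\gamma_{K-1,p}\leq\gamma_{K-1,2p}^{1/2}\lesssim 2^{-\delta(K-1)/2}$, then $\gamma_{K-2,p}^2\lesssim\gamma_{K-1,p}$ gives $\gamma_{K-2,p}\lesssim 2^{-\delta(K-1)/4}$, and in general $\gamma_{K-j,p}\lesssim 2^{-\delta(K-1)/2^{j}}$---the exponent halves at \emph{every step of the $m$-induction}, not merely at every halving of $p$, so after $O(\log K)$ steps all decay is lost and the bound for small $m$ is $O(1)$, destroying uniformity in $K$. (A smaller issue: your Step-1 reduction $B_{p_0}^2\leq B_2+2C_0A(B_{p_0}+1)$ does not bound $B_{p_0}$, since $B_2$ is an unknown that a priori is only $\leq K$. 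The monotonicity you want is $B_{2p_0}\leq B_{p_0}$, valid for even $p_0$ since $S_m^{2p_0}=(S_m^{p_0})^2\leq S_m^{p_0}$; that version would close Step 1. The paper instead runs an upward bootstrap in $p$, using that $B_p\leq K$ forces $B_{p_1}\leq(100LA)^{p_1}$ for some $p_1$, together with $B_p^2\leq 2B_{2p}$ to descend back to $p_0$.)

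The missing idea for the descent in $p$ is a splitting of the sum over $n>m$ at $n=8m$, which is how the paper's Step 2 works. The at most $7m$ near terms are estimated \emph{individually} by $\|S_mS_n^{p_0/2}\|\leq\|S_mS_n^{p_0}S_m\|^{1/2}\leq\gamma_{m,p_0}^{1/2}\lesssim 2^{-\delta_0 m/2}$, so their total is $\lesssim m\,2^{-\delta_0 m/2}\lesssim 2^{-\delta_0 m/4}$: the square root does halve the exponent, but only a polynomial-in-$m$ number of terms is paid for it. For the far range one squares as in your (B), but because the tail now starts at $n>8m$, the cross terms contribute $\sum_{n>8m}Q2^{-\delta_0 n/4}\lesssim Q2^{-2\delta_0 m}\leq Q2^{-\delta_0 m}$, which is consistent with the square of the target $Q2^{-\delta_0 m/4}$; this yields the self-improving inequality $Q\leq C\sqrt{A'+2LQ}$ for $Q=\sup_m 2^{\delta_0 m/4}\|S_m(\sum_{n>m}S_n^{p_0/2})\|$, hence $Q\leq C(\delta_0,A')$. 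Each halving of $p$ then costs only a fixed factor (from $\delta_0$ to $\delta_0/4$), and $\log_2 p_0$ iterations reach $p=1$ with $\delta_0'=\delta_0 p_0^{-2}>0$. Without some device of this kind your recursion cannot close.
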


\begin{proof} We prove the lemma in two steps.

{\bf{Step 1.}} We show first that
\begin{equation}\label{show1}
B_{p_0}\leq C(\delta_0,A).
\end{equation}
Assume $p\geq p_0$ is a dyadic integer and fix $i_1,\ldots i_K\in I$ such that the supremum in \eqref{ma2} is attained. Then, using self-adjointness and \eqref{ma1}, we write
\begin{equation}\label{ma5}
\begin{split}
B_p^2&=\|(S_{1,i_1}^p+S_{2,i_2}^p+\ldots+S_{K,i_K}^p)^2\|\\
&\leq\|S_{1,i_1}^{2p}+\ldots+S_{K,i_K}^{2p}\|+2\sum_{m=1}^{K-1}\|S_{m,i_m}^p(S_{m+1,i_{m+1}}^p+\ldots+S_{K,i_K}^p)\|\\
&\leq B_{2p}+2\sum_{m=1}^{K-1}\gamma_{m,p}.
\end{split}
\end{equation}
We estimate also $\gamma_{m,2p}$. For any $j_m,\ldots,j_K\in I$
\begin{equation*}
\begin{split}
\|S_{m,j_m}(S_{m+1,j_{m+1}}^{2p}+\ldots+S_{K,j_K}^{2p})&\|\leq \|S_{m,j_m}(S_{m+1,j_{m+1}}^{p}+\ldots+S_{K,j_K}^{p})^2\|\\
&+2\sum_{m'=m+1}^{K-1}\|S_{m',j_{m'}}^p(S_{m'+1,j_{m'+1}}^p+\ldots+S_{K,j_K}^p)\|\\
&\leq B_p\gamma_{m,p}+2\sum_{m'=m+1}^{K-1}\gamma_{m',p},
\end{split}
\end{equation*}
using \eqref{ma1} and the identity
\begin{equation*}
\begin{split}
&S_{m+1,j_{m+1}}^{2p}+\ldots+S_{K,j_K}^{2p}=(S_{m+1,j_{m+1}}^{p}+\ldots+S_{K,j_K}^{p})^2\\
&-\sum_{m'=m+1}^{K-1}S_{m',j_{m'}}^p(S_{m'+1,j_{m'+1}}^p+\ldots+S_{K,j_K}^p)-(S_{m'+1,j_{m'+1}}^p+\ldots+S_{K,j_K}^p)S_{m',j_{m'}}^p.
\end{split}
\end{equation*}
Thus, for any $m=1,\ldots,K$ and any dyadic integer $p\geq p_0$
\begin{equation}\label{ma5.1}
\gamma_{m,2p}\leq B_p\gamma_{m,p}+2\sum_{m'=m+1}^{K-1}\gamma_{m',p}.
\end{equation}

We use now inequalities \eqref{ma3}, \eqref{ma5}, and \eqref{ma5.1} to prove \eqref{show1}. Let
\begin{equation*}
L=L(\delta_0)=\sum_{m=0}^\infty 2^{-\delta_0m}.
\end{equation*}
Let $p_1\geq p_0$ denote the smallest dyadic integer for which $B_{p_1}\leq (100LA)^{p_1}$. Such $p_1$ exists because $B_p\leq K$, using \eqref{ma1}. The bound \eqref{show1} follows if $p_1=p_0$. Otherwise we have, for any dyadic integer $p\in[p_0,p_1)$ and any $m=1,\ldots,K$
\begin{equation}\label{ma6}
\begin{split}
&B_p>(100LA)^p;\\
&B_p^2\leq B_{2p}+2\sum_{m=1}^{K-1}\gamma_{m,p};\\
&\gamma_{m,2p}\leq B_p\gamma_{m,p}+2\sum_{m'=m}^{K-1}\gamma_{m',p}.
\end{split}
\end{equation}
It follows from the second equation of \eqref{ma6} and \eqref{ma3} that
\begin{equation*}
B_{p_0}^2\leq B_{2p_0}+4ALB_{p_0}.
\end{equation*}
Using the first equation of \eqref{ma6} it follows that
\begin{equation*}
B_{p_0}^2\leq  2B_{2p_0}.
\end{equation*}
Using the third equation of \eqref{ma6} and \eqref{ma3} it follows that
\begin{equation*}
\gamma_{m,2p_0}\leq B_{p_0}2A2^{-\delta_0m}B_{p_0}+4ALB_{p_0}2^{-\delta_0m}\leq 2^{-\delta_0 m}B_{2p_0}(8A),
\end{equation*}
using $B_{p_0}\geq 2L$ and $B_{p_0}^2\leq 2B_{2p_0}$.

More generally, we  prove by induction that for any dyadic integer $p\in[p_0,p_1)$ and any $m=1,\ldots,K$
\begin{equation}\label{ma7}
B_p^2\leq 2B_{2p}\qquad\text{ and }\qquad \gamma_{m,2p}\leq 2^{-\delta_0m}B_{2p}(4A)^{2p}.
\end{equation}
This was already proved above for $p=p_0$. Assume $p\in[2p_0,p_1)$ is a dyadic integer. It follows from the second inequality in \eqref{ma6} and the induction hypothesis that
\begin{equation*}
B_p^2\leq B_{2p}+2L(4A)^pB_p.
\end{equation*}
Since $B_p>(100LA)^p$, this gives the first inequality in \eqref{ma7}. Using the third inequality in \eqref{ma6} and the induction hypothesis,
\begin{equation*}
\gamma_{m,2p}\leq B_p2^{-\delta_0m}B_{p}(4A)^{p}+2\cdot2^{-\delta_0m}LB_p(4A)^p\leq 2^{-\delta_0m}B_{2p}(4A)^{2p},
\end{equation*}
using $B_p^2\leq 2B_{2p}$ and $B_p\geq 2L$. By induction, this completes the proof of \eqref{ma7}.

Recall now that $B_{p_1}\leq (100LA)^{p_1}$. Thus, using only the first inequality in \eqref{ma7},
\begin{equation*}
\begin{split}
&B_{p_1/2}\leq 2^{1/2}(100LA)^{p_1/2}\\
&B_{p_1/4}\leq 2^{1/2}2^{1/4}(100LA)^{p_1/4}\\
&\ldots\\
&B_{p_1/2^l}\leq 2^{1/2}2^{1/4}\cdot \ldots \cdot 2^{1/2^l}(100LA)^{p_1/2^l}.
\end{split}
\end{equation*}
The bound \eqref{show1} follows by letting $2^l=p_1/p_0$.
\medskip

{\bf{Step 2.}} We prove now the bound \eqref{ma8}. It follows from \eqref{ma3} and \eqref{show1} that
\begin{equation}\label{ma20}
B_{p_0}\leq  A'\quad\text{ and }\quad\gamma_{m,p_0}\leq A'2^{-\delta_0m}\quad\text{ for }m=1,\ldots,K,
\end{equation}
for some constant $A'=A'(\delta_0,A)$. We would like to prove that, for some constant $A''=A''(A',\delta_0)$
\begin{equation}\label{ma21}
B_{p_0/2}\leq  A''\quad\text{ and }\quad\gamma_{m,p_0/2}\leq A''2^{-\delta_0m/4}\quad\text{ for }m=1,\ldots,K.
\end{equation}
We would then be able to prove \eqref{ma8} by repeating this step finitely many times.

We may assume $p_0\geq 2$ and look at $B_{p_0/2}$. Fix $i_1,\ldots,i_K\in I$ which attain the supremum in the definition of $B_{p_0/2}$ and write
\begin{equation}\label{ma10}
\begin{split}
B_{p_0/2}^2&=\|(S_{1,i_1}^{p_0/2}+\ldots+S_{K,i_K}^{p_0/2})^2\|\leq\|S_{1,i_1}^{p_0}+\ldots+S_{K,i_K}^{p_0}\|\\
&+2\sum_{m=1}^{K-1}\|S_{m,i_m}^{p_0/2}(S_{m+1,i_{m+1}}^{p_0/2}+\ldots+S_{K,i_K}^{p_0/2})\|\\
&\leq A'+2\sum_{m=1}^{K-1}\|S_{m,i_m}(S_{m+1,i_{m+1}}^{p_0/2}+\ldots+S_{K,i_K}^{p_0/2})\|,
\end{split}
\end{equation}
using \eqref{ma1}. Let
\begin{equation}\label{ma11}
Q=\sup_{m=1,\ldots, K-1}\sup_{j_m,\ldots,j_K\in I}2^{\delta_0 m/4}\|S_{m,j_m}(S_{m+1,j_{m+1}}^{p_0/2}+\ldots+S_{K,j_K}^{p_0/2})\|.
\end{equation}
Fix $m,j_m,\ldots,j_K$ such that the supremum in \eqref{ma11} is attained. Then we have
\begin{equation}\label{ma12}
\begin{split}
Q=2^{\delta_0m/4}\|S_{m,j_m}&(S_{m+1,j_{m+1}}^{p_0/2}+\ldots+S_{K,j_K}^{p_0/2})\|\leq 2^{\delta_0m/4}\sum_{m'=m+1}^{8m}\|S_{m,j_m}S_{m',j_{m'}}^{p_0/2}\|\\
&+2^{\delta_0m/4}\|S_{m,j_m}(S_{8m+1,j_{8m+1}}^{p_0/2}+\ldots+S_{K,j_K}^{p_0/2})\|.
\end{split}
\end{equation}
Now, using the second  inequality in \eqref{ma20} and the definition of $Q$ in \eqref{ma11}, \eqref{ma1}, selfadjointness, and the hypothesis $S_{m,0}=0$
\begin{equation*}
\|S_{m,j_m}S_{m',j_{m'}}^{p_0/2}\|^2\leq \|S_{m,j_m}S_{m',j_{m'}}^{p_0}S_{m,j_m}\|\leq A'2^{-\delta_0m},
\end{equation*}
and
\begin{equation*}
\begin{split}
\|S_{m,j_m}&(S_{8m+1,j_{8m+1}}^{p_0/2}+\ldots+S_{K,j_K}^{p_0/2})\|^2\leq \|S_{m,j_m}(S_{8m+1,j_{8m+1}}^{p_0}+\ldots+S_{K,j_K}^{p_0})\|\\
&+2\sum_{m'\geq 8m}\|S_{m',j_{m'}}^{p_0/2}(S_{m'+1,j_{m'+1}}^{p_0/2}+\ldots+S_{K,j_K}^{p_0/2})\|\\
&\leq A'2^{-\delta_0m}+2\sum_{m'\geq 8m}Q2^{-\delta_0m'}\leq 2^{-\delta_0 m}(A'+2LQ).
\end{split}
\end{equation*}
Therefore, it follows from \eqref{ma12} and the last two inequalities that
\begin{equation*}
Q\leq 2^{\delta_0m/4}2^{-\delta_0m/2}\sqrt{A'}(7m)+2^{\delta_0m/4}2^{-\delta_0m/2}\sqrt{A'+2LQ}\leq C_{\delta_0}\sqrt{A'+2LQ}.
\end{equation*}
It follows that $Q\leq C(\delta_0,A')$. In view of the definition \eqref{ma11}, this proves the second inequality in \eqref{ma21}. The first inequality in \eqref{ma21} follows from \eqref{ma10}. This completes the proof  of the lemma.
\end{proof}

We will need a version of this lemma for non-selfadjoint operators.

\begin{lemma}\label{prop1}
Assume that $H$ is a Hilbert space, $S_m\in\mathcal{L}(H)$, $m=1,\ldots,K$, and
\begin{equation}\label{na1}
\|S_m\|\leq 1,\quad m=1,\ldots,K.
\end{equation}
Let
\begin{equation*}
I=\{0,1\},\qquad S_{m,0}=S_m,\qquad S_{m,1}=0.
\end{equation*}
For any dyadic integer $p$ we define
\begin{equation}\label{na2}
\begin{split}
&D_p=\sup_{i_1,\ldots,i_K\in I}\|(S_{1,i_1}S^\ast_{1,i_1})^p+\ldots+(S_{K,i_K}S^\ast_{K,i_K})^p\|,\\
&\widetilde{D}_p=\sup_{i_1,\ldots,i_K\in I}\|(S^\ast_{1,i_1}S_{1,i_1})^p+\ldots+(S^\ast_{K,i_K}S_{K,i_K})^p\|.
\end{split}
\end{equation}
For any $m=1,\ldots,K-1$ and dyadic integer $p$ we define
\begin{equation}\label{na2.1}
\begin{split}
&\mu_{m,p}=\sup_{i_m,\ldots,i_K\in I}\|(S_{m,i_m}S^{\ast}_{m,i_m})[(S_{m+1,i_{m+1}}S^\ast_{m+1,i_{m+1}})^p+\ldots+(S_{K,i_K}S^\ast_{K,i_K})^p]\|,\\
&\widetilde{\mu}_{m,p}=\sup_{i_m,\ldots,i_K\in I}\|(S^\ast_{m,i_m}S_{m,i_m})[(S^\ast_{m+1,i_{m+1}}S_{m+1,i_{m+1}})^p+\ldots+(S^\ast_{K,i_K}S_{K,i_K})^p]\|.
\end{split}
\end{equation}
Assume that
\begin{equation}\label{na2.2}
\mu_{m,p_0}\leq A2^{-\delta_0m}(D_{p_0}+1)\text{ and }\widetilde{\mu}_{m,p_0}\leq A2^{-\delta_0m}(\widetilde{D}_{p_0}+1),\qquad m=1,\ldots,K-1,
\end{equation}
for some dyadic integer $p_0$ and some numbers $A\geq 1$ and $\delta_0>0$. Then
\begin{equation}\label{na2.3}
\|S_1+\ldots+S_K\|\leq C(\delta_0,A,p_0).
\end{equation}
\end{lemma}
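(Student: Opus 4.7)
The plan is to reduce Lemma \ref{prop1} to the self-adjoint case (Lemma \ref{mainlemma}) by a doubling trick. On $H\oplus H$ introduce the self-adjoint operators
\[
\widehat{S}_m=\begin{pmatrix}0 & S_m\\ S_m^\ast & 0\end{pmatrix},\qquad m=1,\ldots,K.
\]
One checks directly that $\|\widehat{S}_m\|=\|S_m\|\leq 1$ and $\|\widehat{S}_1+\ldots+\widehat{S}_K\|=\|S_1+\ldots+S_K\|$, so it suffices to bound the norm of the self-adjoint sum. Even powers are block diagonal, $\widehat{S}_m^{2p}=\mathrm{diag}((S_m S_m^\ast)^p,(S_m^\ast S_m)^p)$, so writing $B_p^{(\widehat S)}$ and $\gamma_{m,p}^{(\widehat S)}$ for the quantities \eqref{ma2}, \eqref{ma2.1} associated to the $\widehat{S}_m$, the choice $q_0=2p_0$ gives $B_{2p_0}^{(\widehat S)}=\max(D_{p_0},\widetilde{D}_{p_0})$.

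For $i_m=0$, the product $\widehat{S}_m\sum_{m'>m}\widehat{S}_{m',i_{m'}}^{2p_0}$ is an off-diagonal block matrix whose nonzero entries are $S_m T_{\widetilde{D}}$ and $S_m^\ast T_D$, where $T_D=\sum_{m'>m}(S_{m',i_{m'}}S_{m',i_{m'}}^\ast)^{p_0}$ and $T_{\widetilde{D}}=\sum_{m'>m}(S_{m',i_{m'}}^\ast S_{m',i_{m'}})^{p_0}$. The key step is the identity
\[
\|S_m T_{\widetilde{D}}\|^2 = \|T_{\widetilde{D}} S_m^\ast S_m T_{\widetilde{D}}\| \leq \|T_{\widetilde{D}}\|\cdot\|S_m^\ast S_m T_{\widetilde{D}}\|,
\]
valid because $T_{\widetilde{D}}$ is self-adjoint; together with hypothesis \eqref{na2.2} this yields $\|S_m T_{\widetilde{D}}\|^2\leq \widetilde{D}_{p_0}\cdot A 2^{-\delta_0 m}(\widetilde{D}_{p_0}+1)$, and the analogous argument bounds $\|S_m^\ast T_D\|^2\leq D_{p_0}\cdot A 2^{-\delta_0 m}(D_{p_0}+1)$. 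Taking square roots and combining gives
\[
\gamma_{m,2p_0}^{(\widehat S)}\leq \sqrt{A}\cdot 2^{-\delta_0 m/2}(B_{2p_0}^{(\widehat S)}+1),
\]
which is precisely the hypothesis \eqref{ma3} of Lemma \ref{mainlemma} with parameters $p_0'=2p_0$, $A'=\sqrt{A}$, $\delta_0'=\delta_0/2$. Applying Lemma \ref{mainlemma} yields $B_1^{(\widehat S)}\leq C(\delta_0,A,p_0)$, and since $\|S_1+\ldots+S_K\|\leq B_1^{(\widehat S)}$ (by taking all $i_m=0$), the conclusion \eqref{na2.3} follows.

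The main obstacle is the conversion step in the middle paragraph: the hypothesis controls the two-fold products $S_m S_m^\ast$ and $S_m^\ast S_m$ on the left, whereas the matrix calculation naturally produces bare products $S_m T$ and $S_m^\ast T$. The identity $\|S_m T\|^2=\|T S_m^\ast S_m T\|$ forces $T$ to be self-adjoint and positive, which is precisely why one must raise $\widehat{S}_m$ to the even power $2p_0$ instead of $p_0$; the cost is a halving of the exponential decay rate, which is harmless for the conclusion.
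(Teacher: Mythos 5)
Your proof is correct, and it takes a genuinely different route from the paper's. The paper also begins by invoking Lemma \ref{mainlemma} for the self-adjoint families $\{S_mS_m^\ast\}$ and $\{S_m^\ast S_m\}$, which yields $D_1,\widetilde D_1\lesssim 1$ and decay of $\mu_{m,1},\widetilde\mu_{m,1}$; it then converts these into decay of $\nu_m=\|S_{m}^\ast\sum_{m'>m}S_{m'}S_{m'}^\ast\|$ by exactly the $\|S T\|^2=\|T S^\ast S T\|\le\|T\|\,\|S^\ast S T\|$ trick you use. But the paper still has to run a second, separate bootstrap (the $Q,\widetilde Q$ argument with the splitting of the sum at $8m$ and the coupled inequalities $Q\le C(1+\widetilde Q^{1/2})$, $\widetilde Q\le C(1+Q^{1/2})$) to pass from control of the two-fold products to control of $\|S_m(S_{m+1}^\ast+\ldots+S_K^\ast)\|$ and hence of $\|S_1+\ldots+S_K\|$. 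Your $2\times 2$ self-adjoint dilation $\widehat S_m=\bigl(\begin{smallmatrix}0&S_m\\ S_m^\ast&0\end{smallmatrix}\bigr)$ absorbs that entire second bootstrap into the already-proved Step 2 of Lemma \ref{mainlemma}: the identities $\widehat S_m^{2p}=\mathrm{diag}((S_mS_m^\ast)^p,(S_m^\ast S_m)^p)$ and $\|\widehat S_1+\ldots+\widehat S_K\|=\|S_1+\ldots+S_K\|$ are exactly what is needed, the verification of \eqref{ma3} at the dyadic integer $2p_0$ with constants $(\delta_0/2,\sqrt A)$ is correct (including the reduction of partial sums such as $\|T_{\widetilde D}\|\le\widetilde D_{p_0}$ by setting the lower indices $i_{m'}=1$), and the loss from $\delta_0$ to $\delta_0/2$ is immaterial. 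The net effect is a shorter and more conceptual proof that uses Lemma \ref{mainlemma} only once; the paper's version is more hands-on but avoids enlarging the Hilbert space.
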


\begin{remark}\label{prop2}
A simplified version of the lemma, which is used in the paper, is the following: assume that $H$ is a Hilbert space, $S_m\in\mathcal{L}(H)$, $m=1,\ldots,K$, and let $S_{m,0}=S_m$, $S_{m,1}=0$. Assume that, for all $m=1,\ldots,K$,
\begin{equation}\label{na99}
\begin{split}
&\sup_{m\in\{1,\ldots,K\}}\|S_m\|\leq 1,\\
&\sup_{i_m,\ldots,i_K\in I}\|S^{\ast}_{m,i_m}[(S_{m+1,i_{m+1}}S^\ast_{m+1,i_{m+1}})^{p_0}+\ldots+(S_{K,i_K}S^\ast_{K,i_K})^{p_0}]\|\leq A2^{-\delta_0m},\\
&\sup_{i_m,\ldots,i_K\in I}\|S_{m,i_m}[(S^\ast_{m+1,i_{m+1}}S_{m+1,i_{m+1}})^{p_0}+\ldots+(S^\ast_{K,i_K}S_{K,i_K})^{p_0}]\|\leq A2^{-\delta_0m}.
\end{split}
\end{equation}
Then
\begin{equation*}
\|S_1+\ldots+S_K\|\leq C(\delta_0,A,p_0).
\end{equation*}
\end{remark}

\begin{proof} [Proof of Lemma \ref{prop1}] We apply Lemma \ref{mainlemma} to the operators $S_mS^\ast_m$ and $S^\ast_mS_m$. It follows that there are constants $\overline{A}\geq 1$ and $\overline{\delta}>0$ depending only on $\delta_0, A, P_0$ such that
\begin{equation}\label{na5}
D_1+\widetilde{D}_1\leq\overline{A},\qquad\mu_{m,1}+\widetilde{\mu}_{m,1}\leq \overline{A}2^{-\overline{\delta}m},\qquad m=1,\ldots,K.
\end{equation}

For any $m=1,\ldots,K-1$ let
\begin{equation*}
\begin{split}
&\nu_{m}=\sup_{i_m,\ldots,i_K\in I}\|S^{\ast}_{m,i_m}[(S_{m+1,i_{m+1}}S^\ast_{m+1,i_{m+1}})+\ldots+(S_{K,i_K}S^\ast_{K,i_K})]\|,\\
&\widetilde{\nu}_{m}=\sup_{i_m,\ldots,i_K\in I}\|S_{m,i_m}[(S^\ast_{m+1,i_{m+1}}S_{m+1,i_{m+1}})+\ldots+(S^\ast_{K,i_K}S_{K,i_K})]\|.
\end{split}
\end{equation*}
Clearly, for any $m=1,\ldots,K-1$
\begin{equation*}
\nu_m^2\leq D_1\mu_{m,1},\qquad \widetilde{\nu}_m^2\leq\widetilde{D}_1\widetilde{\mu}_{m,1}.
\end{equation*}
Therefore, using \eqref{na5},
\begin{equation}\label{na6}
\nu_m+\widetilde{\nu}_m\leq 2\overline{A}2^{-\overline{\delta}m/2},\qquad m=1,\ldots,K.
\end{equation}

Clearly
\begin{equation*}
\|S_1+\ldots+S_K\|^2\leq \|S_1S_1^\ast+\ldots+S_KS_k^\ast\|+2\sum_{m=1}^{K-1}\|S_m(S_{m+1}^\ast+\ldots+S_K^\ast)\|.
\end{equation*}
Since $D_1\leq\overline{A}$, for \eqref{na2.3} it suffices to prove that
\begin{equation}\label{na7}
\|S_m(S_{m+1}^\ast+\ldots+S_K^\ast)\|\leq A'2^{-\overline{\delta}m/8},\qquad m=1,\ldots,K-1.
\end{equation}

Let
\begin{equation*}
\begin{split}
&Q=\sup_{m=1,\ldots,K-1}\sup_{i_m,\ldots,i_K\in I}2^{\overline{\delta}m/8}\|S_{m,i_m}(S_{m+1,i_{m+1}}^\ast+\ldots+S_{K,i_K}^\ast)\|,\\
&\widetilde{Q}=\sup_{m=1,\ldots,K-1}\sup_{i_m,\ldots,i_K\in I}2^{\overline{\delta}m/8}\|S^\ast_{m,i_m}(S_{m+1,i_{m+1}}+\ldots+S_{K,i_K})\|.
\end{split}
\end{equation*}
Fix $m,i_m,\ldots,i_K$ such that the supremum in the definition of $Q$ is attained. Then
\begin{equation}\label{na8}
Q\leq 2^{\overline{\delta}m/8}\sum_{m'=m+1}^{8m}\|S_{m,i_m}S^\ast_{m',i_{m'}}\|+2^{\overline{\delta}m/8}\|S_{m,i_m}(S_{8m+1,i_{8m+1}}^\ast+\ldots+S_{K,i_K}^\ast)\|.
\end{equation}
For any $m'\in[m+1,8m]\cap\Z$ we have, using \eqref{na6},
\begin{equation*}
\|S_{m,i_m}S^\ast_{m',i_{m'}}\|\leq \|S_{m,i_m}S^\ast_{m',i_{m'}}S_{m',i_{m'}}\|^{1/2}\leq \widetilde{\nu}_m^{1/2}\leq 2\overline{A}2^{-\overline{\delta}m/4}.
\end{equation*}
Using $\|S_m\|\leq 1$ and the definitions, it follows that
\begin{equation*}
\begin{split}
\|S_{m,i_m}&(S_{8m+1,i_{8m+1}}^\ast+\ldots+S_{K,i_K}^\ast)\|^2\\
&\leq \|S_{m,i_m}(S_{8m+1,i_{8m+1}}^\ast+\ldots+S_{K,i_K}^\ast)(S_{8m+1,i_{8m+1}}+\ldots+S_{K,i_K})\|\\
&\leq \widetilde{\nu}_m+2\sum_{m''=8m+1}^K\|S_{m'',i_{m''}}^\ast(S_{m''+1,i_{m''+1}}+\ldots+S_{K,i_K})\|\\
&\leq \widetilde{\nu}_m+2\sum_{m''=8m+1}^K\widetilde{Q}2^{-\overline{\delta}m''/8}.
\end{split}
\end{equation*}
Therefore, using \eqref{na6} and \eqref{na8},
\begin{equation*}
Q\leq C(\overline{\delta},\overline{A})(1+\widetilde{Q}^{1/2}).
\end{equation*}
A similar argument shows that
\begin{equation*}
\widetilde{Q}\leq C(\overline{\delta},\overline{A})(1+Q^{1/2}),
\end{equation*}
and the desired bound \eqref{na7} follows.
\end{proof}


\begin{thebibliography}{99}

\bibitem{ArOs} G. I. Arkhipov and K. I. Oskolkov, On a special
trigonometric series and its applications, Math. USSR-Sb. {\bf{62}} (1989), 145--155.

\bibitem{BeLe} V. Bergelson and A. Leibman, A nilpotent Roth theorem, Invent. Math. {\bf{147}} (2002), 429--470.

\bibitem{Bi} B. J. Birch, Forms in many variables, Proc. R. Soc. Lond. A {\bf{265}} (1962), 245--263.

\bibitem{Bo2} J. Bourgain, On the maximal ergodic theorem for certain subsets of the integers, Israel J. Math. {\bf{61}} (1988),
39--72.

\bibitem{Bo3} J. Bourgain, On the pointwise ergodic theorem on $L^p$ for arithmetic sets, Israel J. Math. {\bf{61}} (1988), 73--84.

\bibitem{Bo1} J. Bourgain, Pointwise ergodic theorems for arithmetic sets, with an appendix by the author, H. Furstenberg, Y. Katznelson and D. S. Ornstein, Inst. Hautes \'{E}tudes Sci. Publ. Math. {\bf{69}} (1989), 5--45.

\bibitem{CoGr} L. J. Corwin and F. P. Greenleaf, {\it{Representations of nilpotent Lie groups and their applications. Part I. Basic theory and examples}},  Cambridge Studies in Advanced Mathematics {\bf{18}}, Cambridge University Press, Cambridge (1990).

\bibitem{Ch} M. Christ, Hilbert Transforms Along Curves: I. Nilpotent Groups, Ann. Math. {\bf{122/3}} (1985), 575--596.

\bibitem{ChNaStWa} M. Christ, A. Nagel, E. M. Stein, and S. Wainger, Singular and maximal Radon transforms: analysis and geometry, Ann. Math. {\bf{150}} (1999), 489--577.

\bibitem{Da1} H. Davenport, Cubic forms in thirty-two variables, Phil. Trans. R. Soc. Lond. A {\bf{251}} (1959), 193--232.

\bibitem{GaFr} J. Garcia-Cuerva and J. L. Rubio de Francia, {\it{Weighted norm inequalities and related topics,}} North-Holland Mathematics Studies {\bf{116}}, North-Holland Publishing Co., Amsterdam (1985).

\bibitem{IoMaStWa} A. D. Ionescu, A. Magyar, E. M. Stein, and S. Wainger, Discrete Radon transforms and applications to ergodic theory, Acta Math. {\bf{198}} (2007), 231�298.

\bibitem{IoWa} A. D. Ionescu and S. Wainger, $L^p$ boundedness of discrete singular Radon transforms, J. Amer. Math. Soc. {\bf{19}} (2006), 357--383.

\bibitem{Le} A. Leibman, Convergence of multiple ergodic averages along polynomials of several variables, Israel J. Math. {\bf{146}}  (2005), 303--315.

\bibitem{Mal} A. I. Malcev, On a class of homogeneous spaces, Izvestia Acad. Nauk SSSR Ser. Math. {\bf{13}} (1949), 9–-32

\bibitem{MaStWa} A. Magyar, E. M. Stein, and S. Wainger, Maximal operators associated to discrete subgroups of nilpotent Lie groups, J. Anal. Math. {\bf{101}} (2007), 257--312.

\bibitem{Ob} D. Oberlin, Two discrete fractional integrals, Math. Res. Lett. {\bf{8}} (2001), 1--6.

\bibitem{Pi1} L. Pierce, A note on twisted discrete singular Radon transforms, Math. Res. Lett. {\bf{17}} (2010), 701--720.

\bibitem{Pi2} L. Pierce, Discrete fractional Radon transforms and quadratic forms, Duke Math. J. (to appear).

\bibitem{RiSt} F. Ricci and E. M. Stein, Harmonic analysis on nilpotent groups and singular integrals I. Oscillatory integrals, J. Funct. Anal. {\bf{73}} (1987), 179--194.

\bibitem{RiSt2} F. Ricci and E. M. Stein, Harmonic analysis on nilpotent groups and singular integrals II. Singular kernels supported on manifolds, J. Funct. Anal. {\bf{78}} (1988), 56--84.

\bibitem{Fr} J. L. Rubio de Francia, A Littlewood--Paley inequality
for arbitrary intervals, Revista Matem\/{a}tica Iberoamericana
{\bf{1}} (1985), 1--14.

\bibitem{St} E. M. Stein, {\it{Harmonic Analysis: Real-Variable Methods, Orthogonality, and Oscillatory Integrals,}} Princeton University Press, Princeton (1993).

\bibitem{StWa1} E. M. Stein and S. Wainger, Discrete analogues of
singular Radon transforms, Bull. Amer. Math. Soc. {\bf{23}} (1990), 537--544.

\bibitem{StWa4} E. M. Stein and S. Wainger, Discrete analogues in
harmonic analysis I: $\ell^2$ estimates for singular Radon transforms, Amer. J. Math. {\bf{121}} (1999), 1291--1336.

\bibitem{StWa3} E. M. Stein and S. Wainger, Two discrete fractional
integral operators revisited, J. Analyse Math. {\bf{87}} (2002),
451--479.

\end{thebibliography}
\end{document}